\let\oldtocsection=\tocsection
\let\oldtocsubsection=\tocsubsection
\let\oldtocsubsubsection=\tocsubsubsection
\renewcommand{\tocsection}[2]{\hspace{0em}\textbf{\oldtocsection{#1}{#2}}}
\renewcommand{\tocsubsection}[2]{\hspace{1.8em}\oldtocsubsection{#1}{#2}}
\renewcommand{\tocsubsubsection}[2]{\hspace{3em}\oldtocsubsubsection{#1}{#2}}
\numberwithin{equation}{section}
\theoremstyle{plain}
\newtheorem{thm}{Theorem}[section]
\newtheorem{conj}[thm]{Conjecture}
\newtheorem{lemma}[thm]{Lemma}
\newtheorem{cor}[thm]{Corollary}
\theoremstyle{definition}
\newtheorem{definition}[thm]{Definition}
\theoremstyle{remark}
\newtheorem{remark}[thm]{Remark}
\newcommand{\R}{\mathbb{R}}
\newcommand{\C}{\mathbb{C}}
\newcommand{\N}{\mathbb{N}}
\renewcommand{\P}{\mathbb{P}}
\renewcommand{\H}{\mathbb{H}}
\newcommand{\cM}{\mathcal{M}}
\newcommand{\cS}{\mathcal{S}}
\newcommand{\cA}{\mathcal{A}}
\newcommand{\eps}{\varepsilon}
\renewcommand{\epsilon}{\varepsilon}
\renewcommand{\rho}{\varrho}
\renewcommand{\phi}{\varphi}
\newcommand{\1}{\mathrm{\mathbf{1}}}
\newcommand{\df}{\mathrm{d}}
\newcommand{\id}{\operatorname{id}}
\DeclareMathOperator{\Vol}{Vol}
\DeclareMathOperator{\InjRad}{InjRad}
 \def\Xint#1{\mathchoice
      {\XXint\displaystyle\textstyle{#1}}%
      {\XXint\textstyle\scriptstyle{#1}}%
      {\XXint\scriptstyle\scriptscriptstyle{#1}}%
      {\XXint\scriptscriptstyle\scriptscriptstyle{#1}}%
      \!\int}
   \def\XXint#1#2#3{{\setbox0=\hbox{$#1{#2#3}{\int}$}
        \vcenter{\hbox{$#2#3$}}\kern-.5\wd0}}
   \def\dashint{\Xint-}
\title[Short geodesics and $L^p$ norms on random surfaces]{Short geodesic loops and $L^p$ norms of eigenfunctions on large genus random surfaces}
\author{Clifford Gilmore} 
\address{School of Mathematical Sciences, University College Cork, Ireland}                                                              
\email{clifford.gilmore@ucc.ie}  
\author{Etienne Le~Masson} 
\address{Laboratoire de mathématiques AGM, UMR CNRS 8088, CY Cergy-Paris Universit\'e, France}
\email{etienne.le-masson@cyu.fr}
\author{Tuomas Sahlsten} 
\address{Department of Mathematics, University of Manchester, UK}
\email{tuomas.sahlsten@manchester.ac.uk}
\author{Joe Thomas}
\address{Department of Mathematics, University of Manchester, UK}
\email{joe.thomas@manchester.ac.uk}
\keywords{Hyperbolic surfaces, eigenfunctions of the Laplacian, injectivity radius, short geodesics, Selberg transform, Teichm\"uller space, Weil-Petersson volume.}
 \subjclass[2010]{37D40, 11F72}
 \thanks{C.~Gilmore was supported by the Magnus Ehrnrooth Foundation and the Irish Research Council via a Government of Ireland Postdoctoral Fellowship. E. Le Masson was supported by Initiative d'Excellence Paris//Seine. T. Sahlsten was supported by a start-up grant from MIMS, University of Manchester. J. Thomas was supported by the Dean's Award from the University of Manchester. }
\begin{document}
	\begin{abstract}
	
	We give upper bounds for $L^p$ norms of eigenfunctions of the Laplacian on compact hyperbolic surfaces in terms of a parameter depending on the growth rate of the number of short geodesic loops passing through a point. When the genus $g \to +\infty$, we show that random hyperbolic surfaces $X$ with respect to the Weil-Petersson volume have with high probability at most one such loop of length less than $c \log g$ for small enough $c > 0$. This allows us to deduce that the $L^p$ norms of $L^2$ normalised eigenfunctions on $X$ are $O(1/\sqrt{\log g})$ with high probability in the large genus limit for any $p > 2 + \eps$ for $\eps > 0$ depending on the spectral gap $\lambda_1(X)$ of $X$, with an implied constant depending on the eigenvalue and the injectivity radius.
	\end{abstract}

\maketitle

\section{Introduction}\label{intro}

\subsection{Background and main result} 

In the setting of a compact $n$-dimensional Riemannian manifold $(M,g)$, a deep understanding of the shape and asymptotics of eigenfunctions of the Laplacian is intimately linked to underlying geometric properties of the space itself. One means to realise this connection is through studying the $L^p$ norms of the eigenfunctions themselves. Indeed, as an example, primitive estimates show that the multiplicity of the eigenvalues are influenced by the $\sup$ norms of the eigenfunctions as well as the volume of the space through 
	\begin{align*}
		m(\lambda) \leq \Vol(M) \left(\max_{x\in M}\{|\psi(x)| : \Delta\psi = \lambda\psi,\ \|\psi\|_2=1\}\right)^2,
	\end{align*}
where $m(\lambda)$ is the multiplicity of the eigenvalue $\lambda$ (see for example the proof of Proposition 2.1 in \cite{Don01}).\par 

Eigenfunctions of the Laplacian feature prominently in quantum mechanics since they are precisely the states for which the probability measures $|\psi(x,t)|^2\,\df\Vol_M(x)$ are constants, where $\psi(x,t)$ is the free quantum evolution of a wavefunction $\psi(x)$. In this setting, a widely studied problem is to understand the properties of the eigenfunctions in the high energy, or large eigenvalue, limit, aiming to recover some characteristics of the classical dynamics, for example in the study of Quantum (Unique) Ergodicity \cite{Sni74, Zel87, DeV85, RS94, Lin06, Has10}. 

In the large eigenvalue aspect, Sogge's~\cite{Sog88} seminal work identified the link between the growth of $L^p$ norms of eigenfunctions and their $L^2$ norms in terms of their eigenvalue. In particular, if $\Delta\psi = \lambda\psi$ then
	\begin{align*}
		\|\psi\|_p \lesssim_M \lambda^{\sigma(n,p)}\|\psi\|_2,
	\end{align*}
where
	\begin{align*}
		\sigma(n,p)=\begin{cases}
			\frac{n}{2}\left(\frac{1}{2}-\frac{1}{p}\right)-\frac{1}{4}, & \text{if } \frac{2(n+1)}{n-1} \leq p \leq \infty,\\
			\frac{n-1}{4}\left(\frac{1}{2}-\frac{1}{p}\right), & \text{if } 2 \leq p \leq \frac{2(n+1)}{n-1}.
		\end{cases}
	\end{align*}
Here we use  $\lesssim$ to denote that the quantity is bounded up to a constant and subscripts are used to indicate specific dependencies of such a constant. These bounds are sharp on the sphere and attained for high $p$ by zonal spherical harmonics (concentration of the mass around a point) and for low $p$ by Gaussian beams (concentration along closed geodesics). However, in the case of manifolds of non-positive curvature (or without conjugate points), B\'{e}rard \cite{Ber77} had previously obtained a logarithmic improvement of the sup norm. This was more recently extended to values of $p>\frac{2(n+1)}{n-1}$ by Hassell and Tacy \cite{HT15}. \par 

The implied constant in the Sogge bound was investigated by Donnelly \cite{Don01} to reveal that the underlying geometry again plays an important role. More specifically, the constant depends upon bounds on the injectivity radius and the sectional curvature of the manifold. 

In this paper we restrict our attention to hyperbolic surfaces and investigate the influence of the geometry on $L^p$ norms.  Rather than seeking bounds in terms of eigenvalues, we focus on their dependence on the growth rate of short geodesic loops (see \eqref{eqn: ShortGeodesicBound} below). Our goal is to understand this geometric connection with random hyperbolic surfaces, using integration tools on the moduli space developed by Mirzakhani \cite{Mir07, Mir07JAMS, Mir13, MZ15}. In \cite{Mir13}, Mirzakhani initiated a theory of large genus random surfaces with respect to the Weil-Petersson volume (see Section \ref{s:randombackground} for background). An important success of these methods was the proof by Mirzakhani and Petri \cite{MP17} that the length of short geodesics on random surfaces follow a Poisson distribution in the large genus limit. From there, it is natural to try to connect the behaviour of closed geodesics to the spectrum of random surfaces via Selberg's theory (see for example \cite{Ber16} for background on Selberg's trace formula). We present in this paper one of the first attempts at such a connection between the geometry of random surfaces and eigenfunctions of the Laplacian.

A central motivation in our work is to understand the delocalisation properties of eigenfunctions on large volume manifolds. In a recent article \cite{LMS17}, Le~Masson and Sahlsten proposed a version of quantum ergodicity for hyperbolic surfaces of large genus.\footnote{Note that by the Gauss-Bonnet theorem the genus $g$ and the volume of a compact hyperbolic surface $|X|$ are related by the formula $|X| = 2\pi (2g-2),$ and are therefore equivalent parameters in this context.} The theorem is a delocalisation result analogous to the quantum ergodicity theorem of \v{S}nirel'man~\cite{Sni74}, Zelditch~\cite{Zel87} and Colin de Verdi\`ere~\cite{DeV85}, but valid in the large volume limit, and for eigenfunctions in a bounded spectral interval. 
This result was inspired by corresponding theorems on regular graphs \cite{ALEM15, BLML15}, viewed as discrete analogues of hyperbolic surfaces. We will follow a similar heuristic to push the graph methods of Brooks and Le~Masson \cite{BLM18} to the continuous setting. This deterministic aspect will be combined with new estimates on short geodesics of random surfaces, to obtain bounds on $L^p$ norms for random surfaces. Recently, there have been major breakthroughs in the study of eigenvectors on random regular graphs, from optimal sup-norm bounds \cite{BHY19} to the proof of their Gaussian behaviour \cite{BS19}. Our hope is to have provided a stepping stone towards the adaptation of these more advanced results. 

\par 
Before we state our main theorem, let us define the model of random surfaces we are considering. For any $g \geq 2$, we denote by $\mathcal{M}_g$ the moduli space of compact hyperbolic surfaces of genus $g$. It can be seen as a quotient $\mathcal{M}_g = \mathcal{T}_g / \text{Mod}_g$ of the Teichm\"uller space $\mathcal{T}_g$ by the mapping class group $\text{Mod}_g$ (see Section \ref{s:randombackground} for definitions). The Teichm\"uller space $\mathcal{T}_g$ is equipped with a symplectic form $\omega_g$ called the Weil-Petersson form that is invariant under the action of $\text{Mod}_g$. The associated volume form then descends to the quotient $\mathcal{M}_g$, which is of finite total volume. Denoting by $\text{Vol}_{wp}(A)$ the Weil-Petersson volume of a measurable set $A \subset \cM_g$, we obtain the probability measure 
$$ \mathbb{P}_g (A) = \frac{\text{Vol}_{wp}(A)}{\text{Vol}_{wp}(\cM_g)}.$$
One of the remarkable achievements of Mirzakhani was to compute the Weil-Petersson volume of $\cM_g$, and more generally of the moduli spaces of surfaces with boundaries and punctures, making it possible to estimate such probabilities. Note that an alternative model of compact random surfaces has been developed by Brooks and Makover \cite{BM04}. This model is not equivalent in general and we will only work here with the Weil-Petersson model.

For a compact hyperbolic surface $X$, we denote by $\InjRad(X)$ its injectivity radius, which is half the length of its shortest geodesic loop. The main theorem we prove is the following. 

\begin{thm}\label{thm:MainRandom}
Let $X$ be a random compact hyperbolic surface of genus $g$ distributed according to $\P_g$. There exists a universal constant $\delta > 0$ such that for any $c > 0$ and $0 < b < \frac12$, we have the following bounds with probability $1- O\left(g^{-\frac12 + \delta (c+b)} + g^{-2b}\right)$. For an eigenfunction $\psi_\lambda$ of the Laplacian with eigenvalue $\lambda\geq\frac{1}{4}$, 
	\begin{align*}
		\|\psi_\lambda\|_p \lesssim_{p,\lambda,c} \frac{1}{\sqrt{\mathrm{InjRad}(X)\log(g)}}\|\psi_\lambda\|_2,
	\end{align*}
for any $2+4\beta < p \leq\infty$, where $\beta\in [0,\frac{1}{2})$ is such that the smallest non-zero eigenvalue of the Laplacian on $X$ is at least $\frac{1}{4}-\beta^2$. Moreover, if $\psi_\lambda$ is an eigenfunction of the Laplacian with eigenvalue $\lambda\in [0,\frac{1}{4}-\varepsilon)$ for some $\varepsilon>0$ then
	\begin{align*}
		\|\psi_\lambda\|_p \lesssim \frac{1}{\mathrm{InjRad}(X)\left(g^{c\sqrt{\varepsilon}}-1\right)^{1-\frac{2}{p}}}\|\psi_\lambda\|_2,
	\end{align*}
for any $2<p\leq\infty$.
\end{thm}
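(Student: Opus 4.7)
The plan is to combine two independent ingredients: a deterministic $L^p$ bound for eigenfunctions on any hyperbolic surface, expressed in terms of a short-loop counting function, together with a probabilistic estimate showing that random surfaces in the Weil-Petersson model rarely have more than one short geodesic loop. The structure mirrors the Brooks--Le~Masson strategy for regular graphs.

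For the deterministic bound, I would use the Selberg transform. Pick a radial test kernel $k_R$ on $\mathbb{H}^2$, depending on a scale parameter $R \asymp c\log g$, whose Selberg transform $h_R(\lambda)$ is bounded below for the relevant eigenvalue $\lambda$; two natural choices are a spherical function indicator or a wave-type kernel. The associated integral operator acts on any eigenfunction $\psi_\lambda$ as multiplication by $h_R(\lambda)$, and lifting to the universal cover yields the automorphised kernel
\begin{equation*}
K_R^X(x,y) = \sum_{\gamma\in\Gamma} k_R(d(\tilde{x},\gamma\tilde{y})),
\end{equation*}
so $\psi_\lambda(x) = h_R(\lambda)^{-1}\int_X K_R^X(x,y)\psi_\lambda(y)\,d\mathrm{Vol}(y)$. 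Applying H\"older and then estimating $\|K_R^X(x,\cdot)\|_{p'}$ reduces the task to controlling, at each point $x$, the number $N_X(x,R)$ of geodesic loops of length $\le R$ based at $x$, since that number counts the effective number of summands. This is where the short-loop hypothesis \eqref{eqn: ShortGeodesicBound} enters. For $\lambda \ge 1/4$ the Selberg transform is oscillatory with $|h_R|$ bounded below on the spectrum, while for $\lambda < 1/4$ the transform grows exponentially in $R$, which is exactly the source of the $g^{c\sqrt{\varepsilon}}$ factor and the worse $p$-dependence in the sub-$1/4$ regime.

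For the probabilistic ingredient, I would exploit Mirzakhani's integration formula on $\cM_g$ to bound the expected count of primitive closed geodesics of length at most $c\log g$, extending this to non-simple loops by the standard decomposition into a geodesic loop plus a homotopy. Markov's inequality then shows that, with probability $1-O(g^{-1/2+\delta(c+b)} + g^{-2b})$, the surface has at most one such short loop globally. This is where the twin parameters $c$ and $b$ originate: $c$ controls the length scale (and thus the expected count, which grows like $g^{\delta c}$ via the Weil-Petersson volume asymptotics), while $b$ balances the tail probability of the injectivity-radius term against the rare event of two nearby short loops. Crucially, the "at most one short loop" conclusion means that for every point $x$, the counting function $N_X(x,R)$ grows like a fixed (small) constant in $R$, not exponentially, since lattice/collar arguments around a single short geodesic give only polynomially many translates in a ball of radius $R$.

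Putting these together: on the high-probability event from step two, plug the loop bound from step two into the deterministic estimate of step one, optimise in $R = c\log g$, and the two cases of the theorem drop out directly from the two regimes of $h_R(\lambda)$. The main obstacle will be the deterministic step, specifically establishing that a single short geodesic loop (as opposed to zero) does not destroy the bound: one must show that the ball around $x$ in the universal cover still contains only $O(R)$ translates of a fundamental domain contributing to $K_R^X$, which requires careful geometric bookkeeping around the collar of the short geodesic. A secondary difficulty is the passage from counting primitive closed geodesics (to which Mirzakhani's formula directly applies) to counting geodesic loops based at arbitrary points, uniformly in $x \in X$; this likely requires a covering argument together with an $L^1$-to-$L^\infty$ transfer using the injectivity radius lower bound captured by the parameter $b$.
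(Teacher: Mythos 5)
Your overall architecture is right --- a deterministic $L^p$ bound parametrised by a short-loop counting function, combined with a probabilistic estimate on $\cM_g$ --- and this is indeed the structure of the paper's proof (Theorem~\ref{thm:Main} $+$ Theorem~\ref{t:proba} $+$ Lemma~\ref{lem: Sufficient Assumption}). However, there is a genuine gap in your probabilistic step, and your deterministic step is underspecified in a way that matters.

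The probabilistic gap is the serious one. You propose to bound the expected number of primitive closed geodesics of length $\leq c\log g$ and then conclude by Markov that, with high probability, ``the surface has at most one such short loop globally.'' This conclusion is false. By the Mirzakhani--Petri theorem, the number of primitive closed geodesics of length $\leq L$ converges in distribution (as $g\to\infty$, $L$ fixed) to a Poisson random variable with positive mean, and when one lets $L = c\log g$ the expected count grows like $g^c/\log g \to \infty$; Markov's inequality against this expectation produces no nontrivial bound. The statement one actually needs, and what Theorem~\ref{t:proba} asserts, is the \emph{pointwise} condition $N_{c\log g}(X)\leq 1$: at each $x\in X$ there is at most one primitive geodesic loop of length $\leq c\log g$ based at $x$, which is entirely compatible with having many short closed geodesics spread around the surface. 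To prove this, the paper cannot directly integrate a loop-counting function against Weil--Petersson, because Mirzakhani's integral formula (Theorem~\ref{thm: Mirzakhani Integral Formula}) applies to \emph{simple} closed multicurves, and geodesic loops based at a point are generally non-simple. The missing idea is the geometric extraction step (Lemmas~\ref{lem: loop intersections}--\ref{lem: Number Of Intersections Between Curves}): two distinct primitive loops through one point of length $\leq c\log g$, on a surface with $\InjRad(X)\geq g^{-b}$, give rise --- via a regular neighbourhood and the filling/Euler-characteristic count --- to a \emph{separating multicurve} of $O(g^{2b}(\log g)^2)$ simple components and total length $\leq 4c\log g$. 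That separating multicurve is the object to which Mirzakhani's formula applies, and showing its expected count vanishes requires the new volume-product estimate (Lemma~\ref{lem: Sum-Volume Product}) rather than a direct appeal to Mirzakhani--Petri. Your suggested ``standard decomposition into a geodesic loop plus a homotopy'' does not route through simple multicurves and so cannot be inserted into Mirzakhani's formula.

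On the deterministic side, your sketch (single automorphised kernel, H\"older, bound on $\|K_R^X(x,\cdot)\|_{p'}$) does reproduce the $L^\infty$ bound, since $\sup_x\|K_R^X(x,\cdot)\|_2$ unwinds to the diagonal of $W_RW_R^*$ and gives $\|\psi_\lambda\|_\infty\lesssim \sqrt{C(X)/R}\,\|\psi_\lambda\|_2$. But interpolating this $L^\infty$ bound against the trivial $L^2$ bound only yields $\|\psi_\lambda\|_p \lesssim (C(X)/R)^{\frac12(1-\frac2p)}\|\psi_\lambda\|_2$, which is strictly weaker than the $\sqrt{C(X)/R}$ decay claimed uniformly for all $p>2+4\beta$. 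To get the full-strength $p$-independent rate you need the honest $TT^*$ argument of Section~\ref{sec: MaxSpecGap}: bound $\|W_{T,\lambda}W_{T,\lambda}^*\|_{L^q\to L^p}$ by integrating $\|Q_{t+s}\Pi\|+\|Q_{|t-s|}\Pi\|$ over $[0,T]^2$, where each $\|Q_t\Pi\|_{L^q\to L^p}$ is interpolated between $L^1\to L^\infty$ (where the short-loop count and the Brooks--Lindenstrauss kernel bounds enter) and $L^2\to L^2$ (where the spectral gap $\beta$ enters, and which produces the lower threshold $p>2+4\beta$). You allude to Brooks--Le~Masson, so you may have this in mind, but your written argument as it stands proves only a weaker estimate.
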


\begin{remark}
\label{remark: injrad to logg}
\begin{itemize}
\item[(1)] The parameters $c$ and $b$ in Theorem \ref{thm:MainRandom} can be chosen sufficiently small such that the probability tends to $1$ as $g\to+\infty$. We interpret this result as saying that for a random surface of large fixed genus, the given bounds are true \emph{with high probability}. The implied constant in the first inequality is continuous in the eigenvalue and independent of the genus. Therefore if we fix any bounded interval $I \subset (1/4, +\infty)$, we have a decay of $\|\psi_\lambda\|_p/\|\psi_\lambda\|_2$ for any eigenvalue $\lambda \in I$ when $g \to +\infty$. However we emphasise that this is not a result on sequences of independent random surfaces in the sense of the product probability $\prod_k \P_{g_k} $ with $(g_k)_{k\in\N}$ such that $g_k \to +\infty$ when $k \to +\infty$.
\item[(2)] Note the different behaviour between the two parts of the spectrum: $[1/4, +\infty)$, to which we will often refer as the \emph{tempered spectrum}, and $(0,1/4)$ called the \emph{untempered spectrum}.
\item[(3)]By Theorem 4.2 of \cite{Mir13}, we have that for any $\alpha>0$
	\begin{align*}
		\mathbb{P}_g(X:\mathrm{InjRad}(X)\geq \log(g)^{-\alpha}) \geq 1 - O(\log(g)^{-2\alpha}).
	\end{align*}
This means one can remove the injectivity radius constants in the above result and obtain in the tempered case,
	\begin{align*}
		\|\psi_\lambda\|_p \lesssim_{p,\lambda,c} \frac{1}{\log(g)^{\frac{1}{2}(1-\alpha)}}\|\psi_\lambda\|_2,
	\end{align*}
and in the untempered case,
	\begin{align*}
		\|\psi_\lambda\|_p \lesssim \frac{\log(g)^\alpha}{\left(g^{c\sqrt{\varepsilon}}-1\right)^{1-\frac{2}{p}}}\|\psi_\lambda\|_2,
	\end{align*}
for any $\alpha > 0$ and $c>0$ as before both occurring with probability tending to $1$ as $g\to +\infty$ by a union bound.

\end{itemize}
\end{remark}

The probability bound in Theorem \ref{thm:MainRandom} is governed by the measure of a certain subset of $\cM_g$, within which surfaces satisfy the norm inequalities of the theorem. This subset which we denote by $\cA_g^{b,c}$ depends upon two parameters $b,c>0$ chosen independently of the genus and the construction of this set is the subject of the next result we describe. On its own, this result is a statement about short geodesic loops on random surfaces existing in the $g^{-b}$-thick part of the moduli space for some $b>0$ to be chosen (independently of $g$), and so we isolate it as it can in itself be of interest.\par 
By the $a$-thick part of the moduli space, we mean the collection of $X\in\cM_g$ whose injectivity radius is at least $a$; this space is often denoted by $(\cM_g)_{\geq a}$. Again by Theorem 4.2 of \cite{Mir13}, we have that
	\begin{align*}
		\P_g((\cM_g)_{\geq g^{-b}}) \geq 1-O(g^{-2b}).
	\end{align*}
This accounts for the origin of the parameter $b$ in the set $\cA_g^{b,c}$. The parameter $c$ originates from a condition on the number of primitive geodesic loops of length at most $c\log(g)$ (for suitably chosen $c>0$ independent of the genus $g$) that can be based at any point on the surface. For this we introduce the following random variable. For any $X \in \cM_g$, let us denote by $N_L(X,x)$ the number of primitive geodesic loops $\gamma$ (not necessarily simple) of length $\ell_X(\gamma) \leq L$ based at a point $x \in X$, and set 
$$ N_L(X) = \sup_{x\in X} N_L(X,x). $$
The set $\cA_g^{b,c}$ is then the collection of surfaces in the $g^{-b}$-thick part of the moduli space that also have $N_{c\log(g)}(X)\leq 1$ for appropriately chosen constants $b$ and $c$ that are implicit and described in more detail in Section \ref{s:randomproof}. In other words,
$$\cA_g^{b,c} = \left\{ X \in (\cM_g)_{\geq g^{-b}} : N_{c\log(g)}(X)\leq 1 \right\}.$$
We have the following result about the probability for a random surface to be in this set.
\begin{thm}\label{t:proba}
There exists $\delta > 0$ such that for all $c > 0$ and $0<b<\frac{1}{2}$,
$$\P_g\left(X\in(\mathcal{M}_g)_{\geq g^{-b}} : N_{c\log g}(X) \leq 1 \right) \geq 1- O\left(g^{-\frac12 + \delta (c+b)}+g^{-2b}\right)\ \ \ \text{as}\ g\to\infty,$$
and therefore for $b$ and $c$ small enough, this probability tends to $0$ when $g \to +\infty$.
\end{thm}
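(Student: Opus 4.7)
We bound the complementary event, decomposed as $E_1 \cup E_2$ with $E_1 = \{X \notin (\cM_g)_{\geq g^{-b}}\}$ and $E_2 = \{X \in (\cM_g)_{\geq g^{-b}},\, N_{c\log g}(X) \geq 2\}$. Theorem 4.2 of \cite{Mir13} gives $\P_g(E_1) = O(g^{-2b})$, so the task reduces to establishing $\P_g(E_2) = O(g^{-1/2 + \delta(c+b)})$ for a universal $\delta > 0$. The strategy is to extract from each surface in $E_2$ an embedded subsurface of bounded topological complexity with short boundary, then bound the expected number of such subsurfaces via Mirzakhani's integration formula together with the volume asymptotics of Mirzakhani--Zograf \cite{MZ15}.

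Suppose $X \in E_2$ and $x \in X$ carries two primitive geodesic loops $\gamma_1, \gamma_2$ of length at most $L := c\log g$, distinct in $\pi_1(X,x)$ modulo inversion. Commuting elements in a surface group always lie in a common cyclic subgroup, so two distinct primitive elements cannot commute; hence $\gamma_1, \gamma_2$ generate a free subgroup $F < \pi_1(X,x)$ of rank two. The convex core of $F$ acting on $\mathbb{H}^2$ descends to an immersed subsurface $Y \subset X$ with $\chi(Y) = -1$, topologically either a pair of pants or a one-holed torus. Its boundary closed geodesics represent the conjugacy classes of $\{\gamma_1, \gamma_2, \gamma_1 \gamma_2\}$ (pants case) or of $[\gamma_1, \gamma_2]$ (torus case), and have length at most $4L$. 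The hypothesis $\InjRad(X) \geq g^{-b}$ together with the collar lemma then allows us to enlarge $Y$ slightly to an embedded subsurface $\widetilde Y \subset X$ whose boundary consists of disjoint simple closed geodesics of length at most $C(c+b)\log g$ for an absolute constant $C$.

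For each of the finitely many homeomorphism types $[\Sigma']$ of the complement of $\widetilde Y$ in $\Sigma_g$, Mirzakhani's integration formula yields
\[ \E_g\!\left[\,\#\{\text{embedded configurations of type }[\Sigma']\text{ with boundary lengths} \leq L'\}\,\right] = \frac{1}{V_g}\int_{[0, L']^n}\!\!\prod_{i=1}^n \ell_i\, V_{[\Sigma']}(\vec\ell)\, d\vec\ell, \]
up to a fixed symmetry factor, with $n \in \{1, 3\}$ depending on the type. The pointwise bound $V_{g',n}(\vec\ell) \leq V_{g',n} \prod_i \sinh(\ell_i/2)/(\ell_i/2)$ combined with the Mirzakhani--Zograf volume-ratio asymptotics $V_{[\Sigma']}/V_g = O(g^{-a})$ with $a \geq 1$ reduces the integrals to elementary estimates; with $L' = C(c+b)\log g$ and after summing the finitely many complement types, we obtain an upper bound of order $g^{-1/2 + \delta(c+b)}$ for a suitable universal $\delta > 0$. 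Markov's inequality converts this expected count into the desired probability bound on $\P_g(E_2)$.

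The principal difficulty is the extraction step: the subsurface $Y$ obtained from the convex core of a two-generator free subgroup is naturally only immersed in $X$, and producing an embedded replacement $\widetilde Y$ with controlled boundary length requires a careful collar-lemma analysis that leverages the injectivity radius lower bound---this is the source of the $b$-dependence in the final exponent. The counting step is a comparatively standard Mirzakhani integration, but bookkeeping across all complement types still demands some care.
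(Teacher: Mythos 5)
Your decomposition of the complementary event and the overall strategy (extract a subsurface with controlled boundary, then Mirzakhani integration plus Markov) is the correct skeleton, and your algebraic observation that two distinct primitive elements generate a rank-two free subgroup of a Fuchsian group is sound. However, the crucial geometric step is wrong, and it is exactly the step you flag as the "principal difficulty."

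The claim that one can enlarge the immersed convex-core subsurface $Y$ (of $\chi = -1$) to an \emph{embedded} subsurface $\widetilde Y$ of bounded topological complexity using the collar lemma and the injectivity radius bound does not hold. The two loops $\gamma_1, \gamma_2$ are not simple; they can have up to $O\bigl((\log g)^2 / \InjRad(X)^2\bigr) = O(g^{2b}(\log g)^2)$ intersections and self-intersections. When you pass to an embedded neighborhood of $\gamma_1 \cup \gamma_2$ (which is the only canonical way to extract something embedded), the boundary is a multicurve whose number of components grows proportionally to that intersection count. No collar-lemma argument converts this into a pair of pants or a one-holed torus, and consequently the complement of the extracted piece does not range over ``finitely many homeomorphism types'' as you assume --- the number of topological types grows with $g$. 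This is not a presentational shortcut one can patch; the unboundedness of the multicurve is the source of all the real work.

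The paper's proof confronts precisely this: a regular neighborhood of $\gamma_1 \cup \gamma_2$ yields a separating multicurve of total length $\leq 4c\log g$ with $O(g^{2b}(\log g)^2)$ components (using the Euler characteristic identity for filling curves, with $\InjRad(X)\geq g^{-b}$ entering via the intersection count). One then reduces to \emph{minimally} separating multicurves, which have the key structural property that they cut $\Sigma_g$ into exactly two pieces with $k$ boundary components each and have symmetry group of order $k!$. The Mirzakhani integration is then a genuine double sum: over the number of components $k$ up to $K(g) = O(g^d)$, and over all genus partitions $(g_1, g_2)$ of the two complementary pieces. Controlling this sum requires a quantitative product-of-volumes estimate (the paper's Lemma~\ref{lem: Sum-Volume Product}) based on the Mirzakhani--Zograf asymptotics, and this is where the factor $g^{-1/2}$ ultimately originates. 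Your sketch, by assuming bounded complexity from the start, bypasses all of this and therefore cannot produce the stated bound.

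Two smaller inaccuracies: your boundary length bound $C(c+b)\log g$ has a spurious $b$-dependence --- the correct bound is $4c\log g$ (double the combined length of the loops), with $b$ entering only through the component count; and $n \in \{1,3\}$ is not the right range of boundary-component counts once the multicurve is allowed to have many pieces.
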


\begin{remark}
The fourth named author has ongoing work with Laura Monk in which new methods can be used to obtain more quantitative bounds on the probability of $N_{c\log(g)}\leq 1$.
\end{remark}

Thus, the rate at which the probability holds in Theorem \ref{thm:MainRandom} is given by the rate in Theorem \ref{t:proba}. The previous theorem says that, with high probability when $g \to +\infty$, at any point of a random surface in the $g^{-b}$-thick part of the moduli space there is no more than one primitive geodesic loop of length less than $c \log g$ based at this point. This implies in particular that if there is one, this loop is necessarily simple since the shortest geodesic loop based at a point is always simple. Related countings of the number of short geodesics of a given length on random surfaces are done in \cite{MP17}. We use similar ideas to this work, but the dependence of the length of the loops we consider on the genus (as opposed to just being uniformly bounded) and the fact that we consider geodesic loops rather than just closed geodesics requires us to develop more delicate and quantitative tools that we detail in Section \ref{s:randomproof}.

Theorem \ref{thm:MainRandom} relies on Theorem \ref{t:proba} together with a deterministic theorem about $L^p$ norms. This deterministic result requires us to consider a condition on the surfaces and it is precisely Theorem \ref{t:proba} that allows us to dispense of this condition in favour of a result holding with high probability. Let $X=\Gamma\backslash\H$ be a compact hyperbolic surface with fundamental domain $D$. We fix $R = R(X) \geq 0$ and $C(X) > 0$ such that for any $\delta >0$ there exists $C_0(\delta) > 0$, so that
	\begin{align} \label{eqn: ShortGeodesicBound}
		\sup_{z,w \in D} | \{ \gamma \in \Gamma \, | \, d(z,\gamma w) \leq r \} | \leq C(X) C_0(\delta) \, e^{\delta r} \qquad \text{for any } r \leq R.
	\end{align}

As explicated in Lemma \ref{lem: Sufficient Assumption}, if there exist $n \in \N$ and $L > 0$ such that $N_L(X) \leq n$, then we can take $C(X) = \frac1{\InjRad(X)}$ and $R = L$ in \eqref{eqn: ShortGeodesicBound}. The idea here is to take $R$ as large as possible while controlling the constant $C(X)$. On random surfaces Theorem \ref{t:proba} allows us to take $R=c\log(g)$ with $C(X)=\frac{1}{\InjRad(X)}$ and the constant $C(X)$ can be dispensed of by Remark \ref{remark: injrad to logg} if so desired.\par 

Our deterministic result in terms of the parameters of condition \eqref{eqn: ShortGeodesicBound} is stated as follows.

\begin{thm}  \label{thm:Main}
Suppose that $X=\Gamma\backslash\H$ is a compact hyperbolic surface whose smallest non-zero eigenvalue of the Laplacian is at least $\frac{1}{4}-\beta^2$ for some $\beta\in [0,\frac{1}{2})$. For an eigenfunction $\psi_\lambda$ of the Laplacian with eigenvalue $\lambda\geq\frac{1}{4}$, we have that
	\begin{align*}
		\|\psi_\lambda\|_p \lesssim_{p,\lambda} \frac{\sqrt{C(X)}}{\sqrt{R}}\|\psi_\lambda\|_2,
	\end{align*}
for any $2+4\beta < p \leq\infty$. Moreover, if $\psi_\lambda$ is an eigenfunction of the Laplacian with eigenvalue $\lambda\in [0,\frac{1}{4}-\varepsilon)$ for some $\varepsilon>0$ then for any $\delta > 0$
	\begin{align*}
		\|\psi_\lambda\|_p \lesssim_\delta \frac{C(X)}{\left(e^{(1-\delta)\sqrt{\varepsilon}R}-1\right)^{1-\frac{2}{p}}}\|\psi_\lambda\|_2,
	\end{align*}
for $2<p\leq\infty$. Throughout, $C(X)$ and $R$ are given by \eqref{eqn: ShortGeodesicBound}.
\end{thm}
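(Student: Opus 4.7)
The plan is to adapt the Brooks--Le~Masson \cite{BLM18} graph argument to hyperbolic surfaces via Selberg's theory of point-pair invariants. For a radial $\Psi:[0,\infty)\to\R$ supported in $[0,R]$, I will work with the automorphised kernel
$$K_\Psi(z,w) = \sum_{\gamma\in\Gamma}\Psi\bigl(d(z,\gamma w)\bigr),$$
a self-adjoint integral operator on $L^2(X)$ which acts on any Laplace eigenfunction by multiplication by the Selberg/Harish-Chandra transform $\hat\Psi(\lambda)=\int_\H\Psi(d(o,y))\phi_\lambda(y)\,dy$, with $\phi_\lambda$ the standard spherical function. The identity $K_\Psi\psi_\lambda=\hat\Psi(\lambda)\psi_\lambda$ converts bounds on $\|K_\Psi\|_{L^q\to L^p}$ into bounds on $\|\psi_\lambda\|_p/\|\psi_\lambda\|_q$ upon division by $|\hat\Psi(\lambda)|$, so the task is to choose $\Psi$ to minimise this ratio.

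For the sup-norm, applying Cauchy--Schwarz both in the $y$-integral and inside the $\gamma$-sum and combining with the orbit-counting assumption~\eqref{eqn: ShortGeodesicBound} yields
$$\|K_\Psi(z,\cdot)\|_{L^2(X)}^2 \lesssim_\delta C(X)\,e^{\delta R}\,\|\Psi\|_{L^2(\H)}^2$$
for any $\delta>0$. Combined with the pointwise Cauchy--Schwarz $|K_\Psi\psi_\lambda(z)|\leq\|K_\Psi(z,\cdot)\|_2\|\psi_\lambda\|_2$, this reduces the sup-norm question to optimising $\|\Psi\|_{L^2(\H)}/|\hat\Psi(\lambda)|$ over $\Psi$ supported in $[0,R]$. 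The Cauchy--Schwarz extremiser is $\Psi_R:=\overline{\phi_\lambda}\,\mathbf{1}_{[0,R]}$, for which both quantities equal $I(R,\lambda):=2\pi\int_0^R|\phi_\lambda(\rho)|^2\sinh\rho\,d\rho$. Standard Harish-Chandra asymptotics then yield $I(R,\lambda)\asymp R$ when $\lambda\geq 1/4$ and $I(R,\lambda)\asymp(e^{2\sqrt{\varepsilon}R}-1)/(2\sqrt\varepsilon)$ when $\lambda=1/4-\varepsilon$, reproducing the two sup-norm bounds of the theorem after the $e^{\delta R}$ loss has been absorbed into the implied constants (giving the $(1-\delta)$ in the untempered exponent).

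To upgrade from $p=\infty$ to all $p>2+4\beta$ with a uniform constant in the tempered case, I would not use naive Riesz--Thorin between $L^2\to L^\infty$ and $L^2\to L^2$, which delivers only the weaker $(C(X)/R)^{(1-2/p)/2}$. Instead I would bound $\|K_{\Psi_R}\|_{L^{p'}\to L^p}$ directly from the spectral decomposition, exploiting that $|\hat\Psi_R(\mu)|$ is controlled for $\mu\neq\lambda$ in the spectrum of $-\Delta_X$, together with a Kunze--Stein / Cowling--Haagerup-type convolution inequality on $\H$ whose sharp range is governed exactly by the bottom of the spectrum $1/4-\beta^2$: this is the source both of the threshold $p>2+4\beta$ and of the appearance of the spectral-gap hypothesis. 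In the untempered case no such refinement is needed; interpolating the sup-norm bound against the trivial $L^2$ bound delivers the $(e^{(1-\delta)\sqrt\varepsilon R}-1)^{-(1-2/p)}$ factor directly.

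The main obstacle lies in the tempered $L^p$ step: securing a $p$-independent constant of size $\sqrt{C(X)/R}$ for every $p>2+4\beta$ requires the Kunze--Stein-type restriction estimate to be sharp with respect to the spectral gap $\beta$, while the $e^{\delta R}$ slack in~\eqref{eqn: ShortGeodesicBound} must be absorbed cleanly so that no polynomial-in-$R$ factor survives. Once $\Psi_R$ is fixed and the spherical function asymptotics are in place, the remaining estimates are essentially routine.
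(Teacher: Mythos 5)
Your plan for the untempered portion of the theorem is essentially the paper's: the paper uses the ball indicator $\mathbf{1}_{[0,t]}/\sqrt{\cosh t}$ in place of your truncated spherical function, but both work by the same mechanism (the Selberg transform of a nonnegative, radially supported kernel grows like $\sinh(\sqrt\varepsilon t)$ in the untempered range, and the $L^2$ norm of the automorphised kernel is controlled via Cauchy--Schwarz and the counting hypothesis~\eqref{eqn: ShortGeodesicBound}, followed by $L^2$--$L^\infty$ interpolation). That part is fine.

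The tempered part has two genuine gaps. First, your sup-norm step: the Cauchy--Schwarz inside the $\gamma$-sum gives
$$|K_\Psi(z,w)|^2 \leq |\{\gamma : d(z,\gamma w)\leq R\}|\cdot\sum_\gamma|\Psi(d(z,\gamma w))|^2 \lesssim_\delta C(X)\,e^{\delta R}\sum_\gamma|\Psi(d(z,\gamma w))|^2,$$
and the $e^{\delta R}$ factor genuinely cannot be ``absorbed into the implied constant'' when $\lambda\geq\frac14$: the implied constant $C_0(\delta)$ blows up as $\delta\to 0$ (the counting in Lemma~\ref{lem: Sufficient Assumption} is linear in $r$, so $C_0(\delta)\sim 1/\delta$), and optimising $\delta=1/R$ gives a bound of order $\sqrt{C(X)}$ with no $1/\sqrt{R}$ decay at all. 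The loss comes from discarding the fact that the contribution of $\gamma$ at radius $r$ is not uniform but weighted by $|\phi_\lambda(r)|\sim e^{-r/2}$: one must pair the $e^{\delta r}$ growth of the annular count against the $e^{-r/2}$ decay of the kernel \emph{scale by scale}, not apply a flat Cauchy--Schwarz at the outer scale $R$. The paper does exactly this in Lemma~\ref{lem: QtBounds}, where the Brooks--Lindenstrauss kernel $k_t$ has sup norm $\lesssim e^{-t/2}$ and is essentially supported in a ball of radius $4t$, so that the product $C(X)e^{\delta t}\cdot e^{-t/2}$ decays uniformly in $t\leq R/4$.

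Second, the $L^p$ upgrade for the tempered case is left as a pointer to a ``Kunze--Stein / Cowling--Haagerup-type'' inequality, with neither a statement of the estimate used nor a verification that it produces a constant of the right size and the threshold $2+4\beta$. The paper's actual route is quite different and worth contrasting: it defines $P_t$ from $j_t(r)=\cos(rt)/\sqrt{\cosh(\pi r/2)}$, builds $W_{T,\lambda}=\int_0^T\cos(s_\lambda t)P_t\Pi\,\df t$, and runs a $TT^*$ argument. The linearisation $P_tP_s^*\Pi=\tfrac12(Q_{t+s}\Pi+Q_{|t-s|}\Pi)$ (Lemma~\ref{lem: LinearisationResult}) reduces the problem to the operators $Q_t$ of Brooks--Lindenstrauss, whose $L^q\to L^p$ norm is obtained by Riesz--Thorin interpolation of $\|Q_t\Pi\|_{L^1\to L^\infty}\lesssim_\delta C(X)e^{-t(\frac12-\delta)}$ against $\|Q_t\Pi\|_{L^2\to L^2}\leq e^{\beta t}$; the threshold $p>2+4\beta$ is exactly the condition for the interpolated exponent $(\tfrac12-\delta)(1-\tfrac2p)-\beta\tfrac2p$ to remain positive, and the $TT^*$ double integral of $e^{-\alpha_p|t-s|}$ over $[0,T]^2$ then produces a clean $O(T)$, independent of $p$ and with no $e^{\delta R}$ residue. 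So interpolation \emph{is} used, but on the building block $Q_t$ inside the $TT^*$ argument, not on the eigenvalue-localiser directly --- which is how the $p$-independent constant is achieved without needing a Kunze--Stein input. You would need to either supply and verify the Kunze--Stein-type estimate with the correct $\beta$-dependence, or adopt a propagator/$TT^*$ structure as in the paper.
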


Note that in a purely deterministic setting we can always take $R = \InjRad(X)$ and $C(X)=1$. On the other hand, by noticing that the number of fundamental domains intersecting a ball of radius $R$ is greater than $e^R/ g$ we see that if $C(X)$ is taken to be independent of $X$ we have necessarily $R \leq \log(g)$.

\bigskip
\subsubsection*{Optimal bounds}
One can ask what is the best bound on $L^p$ norms that can be obtained in the large genus limit. Clearly for any function $\psi \colon X \to \R$
\begin{equation}\label{e:bestbound}
\|\psi\|_\infty \geq \frac{\|\psi\|_2}{\sqrt{|X|}}
\end{equation}
with equality if and only if $\psi$ is constant almost everywhere. Eigenfunctions of non-zero eigenvalue are not constant and therefore some correction is required. 

On large random regular graphs, the following was proved by Bauerschmidt, Huang and Yau \cite[Theorem 1.2]{BHY19}. Let $\mathrm{G}_{N,d}$ be the set of random regular graphs of degree $d$ on $N$ vertices. We put the uniform probability measure on $\mathrm{G}_{N,d}$. There exists $d_0$ very large but fixed such that for $d \geq d_0$ and with probability tending to $1$ when $N \to +\infty$, any eigenvector $v$ with eigenvalue in the tempered spectrum satisfies 
$$ \| v \|_\infty \lesssim \frac{(\log N)^\alpha}{\sqrt{N}} \|v\|_2, $$
for some $\alpha > 0$ depending on the distance of the eigenvalue from the boundaries of the tempered spectrum.

Inspired by this graph result we can formulate the following conjecture.

\begin{conj} \label{c:QUE}
Let $X$ be a compact hyperbolic surface of genus $g$ chosen uniformly at random with respect to the Weil-Petersson volume. Then for any $\eps >0$ and any eigenfunction $\psi_\lambda$ with eigenvalue $\lambda \in \left(\frac14 + \eps, +\infty \right)$ we have

$$ \| \psi_\lambda \|_\infty \lesssim \frac{(\log g)^{\alpha(\eps)}}{\sqrt{g}} \| \psi_\lambda \|_2 $$
for some function $\alpha(\eps) > 0$ of $\eps$, with probability tending to $1$ when $g \to +\infty$.
\end{conj}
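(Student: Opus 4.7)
The plan is to attempt to transport the random regular graph arguments of Bauerschmidt-Huang-Yau \cite{BHY19} to the setting of random hyperbolic surfaces. The key obstruction to directly iterating Theorem \ref{thm:Main} is the observation recorded just after that theorem: since the number of fundamental domains meeting a ball of radius $R$ is at least $e^R/g$, one cannot take $R$ much larger than $\log g$ while keeping $C(X)$ uniformly bounded. Hence the propagation/short-geodesic method at a single point caps $\|\psi_\lambda\|_\infty/\|\psi_\lambda\|_2$ at order $(\log g)^{-1/2}$ and cannot reach $g^{-1/2}$. A different mechanism is therefore required, based on a global spectral (resolvent) analysis rather than a local eigenfunction one.

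First I would seek a local semicircle-type law on random surfaces: precise asymptotics, with quantitative control, for the resolvent $G_z(x,y) = (\Delta - z)^{-1}(x,y)$ of the Laplacian when $\Im z = \eta$ is as small as $g^{-1+o(1)}$, the mean level spacing on a surface of volume $\asymp g$. The leading deterministic profile should be the Green's function of the hyperbolic plane integrated against the Plancherel density. Given such a local law, the standard resolvent identity
\begin{equation*}
\|\psi_\lambda\|_\infty^2 \lesssim \eta \sup_{x \in X} \Im\, G_{\lambda + i\eta}(x,x)
\end{equation*}
with $\eta \sim (\log g)^{2\alpha(\eps)}/g$ would produce the conjectured bound.

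The local law itself I would structure in three stages. Stage one is a significantly refined length-spectrum count: one needs, with $\P_g$-probability tending to $1$, the number of primitive geodesic loops based at each $x \in X$ of length up to $c \log g$, for $c$ arbitrarily large, to match the hyperbolic plane prediction up to a small error. This is the quantitative strengthening of Theorem \ref{t:proba} alluded to in the remark immediately following it, and it should build on the Mirzakhani integration formula together with the limiting Poisson statistics of Mirzakhani-Petri \cite{MP17}. Stage two is a Weil-Petersson analogue of the switching/resampling argument used on graphs, realised through moduli-space surgeries across collar neighbourhoods of short geodesics, together with comparisons of Weil-Petersson volumes of surfaces of varying topological type. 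Stage three is a bootstrap in $\eta$, starting from $\eta \sim 1$ where the Selberg trace formula gives the local law, and descending to $\eta \sim g^{-1}$.

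The hardest step, in my view, is the continuous switching argument of stage two. On $d$-regular graphs, switching edges is a combinatorial move respecting the uniform measure on $\mathrm{G}_{N,d}$; on $\cM_g$ there is no comparably rigid structure, and one must instead rely on plumbing/cutting operations and on delicate Weil-Petersson volume asymptotics. A secondary obstacle is the spectral condition $\lambda > 1/4 + \eps$: the untempered spectrum behaves poorly under the proposed resolvent method (compare the exponential-in-$\sqrt\eps$ behaviour already visible in Theorem \ref{thm:MainRandom}), and the exponent $\alpha(\eps)$ should accordingly blow up as $\eps \to 0$, matching the analogous phenomenon in \cite{BHY19}. A more modest intermediate target, which may be within reach of an extension of the present methods alone, would be to replace $(\log g)^{\alpha(\eps)}/\sqrt g$ by $g^{-\theta(\eps)}$ for some explicit $\theta(\eps) \in (0,1/2)$, obtained by iterating Theorem \ref{thm:Main} across a hierarchy of scales with improved loop counts at each step.
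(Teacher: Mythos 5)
The statement you are addressing is a \emph{conjecture}: the paper offers no proof of it, and indeed it remains open. There is therefore no argument in the paper against which your attempt can be checked. You have correctly recognised this and written a research programme rather than a proof, and as a programme it is sensible: the observation that the propagation method of Theorem \ref{thm:Main} is capped at $R \lesssim \log g$ (because the number of fundamental domains meeting a ball of radius $R$ grows like $e^R/g$) is exactly the remark made after Theorem \ref{thm:Main}, so you are right that a genuinely new mechanism is required, and a local law for the resolvent at spectral resolution $\eta \sim g^{-1+o(1)}$ is the natural analogue of \cite{BHY19}. The elementary inequality $\|\psi_\lambda\|_\infty^2 \lesssim \eta \sup_x \Im\, G_{\lambda+i\eta}(x,x)$ that you use to pass from the local law to the sup-norm bound is correct.

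That said, none of the three stages you list is actually carried out, and each currently contains a genuine gap. Stage one, the refined count of geodesic loops of length up to $c\log g$ at \emph{every} base point for $c$ arbitrarily large, goes far beyond what Theorem \ref{t:proba} gives (which is $N_{c\log g}(X) \leq 1$ for $c$ small), and it is not merely a quantitative sharpening --- the Mirzakhani--Petri approach controls expectations of counts of \emph{closed geodesics} by homotopy type, whereas a local law would need pointwise control of loop counts at a scale where the expected count is already growing, and there is no known replacement for the union bound that would localise this over all $x \in X$. Stage two is the serious obstruction, as you flag: there is no analogue of the edge-switching move of random regular graphs in the Weil--Petersson model. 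Plumbing and cut-and-glue surgeries do not preserve the Weil--Petersson measure in any usable way, and the volume asymptotics of \cite{MZ15} are not fine enough to compare the measures of topologically distinct configurations to the accuracy a self-consistent resolvent equation would require. Stage three, the bootstrap in $\eta$, only makes sense once stages one and two are available, since the self-improvement step in \cite{BHY19} relies precisely on the switching estimate. In short, the proposal is a reasonable description of what a proof might eventually look like, and it mirrors the heuristic the authors themselves record when stating the conjecture, but it does not constitute a proof and the central technical ingredient --- a Weil--Petersson switching lemma --- is currently missing from the literature.
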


Such a result on the sup norm would give a strong form of delocalisation. In particular it would prevent concentration of eigenfunctions on sets of volume less than $g/\log(g)^{2\alpha}$.

\subsubsection*{Arithmetic surfaces}
 In the compact arithmetic setting, and for a Hecke eigenfunction $\psi_\lambda$, stronger bounds exist both in terms of the eigenvalue, due to Iwaniec and Sarnak \cite{IS95}, and in terms of the genus (or more precisely the congruence level), due to Saha and Hue-Saha \cite{Sah19, HS19}. In the eigenvalue aspect, for a given compact arithmetic surface the following bound holds \cite[Theorem 0.1]{IS95}: for any eigenfunction $\psi$ with $\Delta \psi = \lambda \psi$ and any $\eps >0$
 $$ \| \psi \|_\infty \lesssim_{\eps} \lambda^{5/24 + \eps}  \| \psi \|_2.$$
 
In the level aspect the bound is more complex and depends on the arithmetic properties of the level but it has a power decay in terms of the genus of the form
 $$ \| \psi \|_\infty \lesssim_{\lambda} g^{-\alpha}  \| \psi \|_2$$
for some exponent $\alpha > 0$.
 Note that similar level aspect bounds have been obtained previously in the non-compact case of congruence covers of the modular surface \cite{BH10, HT13}.

\subsubsection*{Hybrid bounds} 
The bounds we obtain depend implicitly on the eigenvalue. The dependence can be made explicit in our proof but is much worse than Sogge's bounds. It would be interesting to have better combined dependence both in terms of eigenvalue and genus. Such hybrid bounds were obtained in the arithmetic setting for Maass cusp forms by Templier and Saha \cite{Tem15, Sah17}. Developing such a theory on random surfaces could for example allow one to improve eigenvalue bounds for a positive measure set of surfaces. Alternatively, in a similar way as the work of Bauerschmidt, Huang and Yau \cite{BHY19} requires graphs of very large degree, we could expect that Conjecture \ref{c:QUE} could be easier to approach if we assume the eigenvalue $\lambda$ to be large.

\subsubsection*{Multiplicities}
As we have observed at the beginning of the introduction, the sup norm of an $L^2$-normalised eigenfunction $\psi_\lambda$ with eigenvalue $\lambda$ can be linked to the multiplicity of $\lambda$ by
\begin{equation}\label{e:mult}
\frac{m(\lambda)}g \lesssim \|\psi_\lambda\|_\infty^2.
\end{equation}
Through this inequality our result is connected to the problem of limit multiplicities in representation theory initiated by DeGeorge and Wallach \cite{DEGW78,DEGW79}. Bounds for multiplicities in arithmetic settings have been studied by Sarnak and Xue \cite{SX91}. Recently \cite{ABBGNRS17}, it was proved that for a general Benjamini-Schramm converging sequence of compact hyperbolic surfaces $(X_n)$ with associated genus $g_n \to +\infty$, for any $\lambda >0$ the ratio $m(\lambda)/g \to 0$ when $g_n \to +\infty$. Note that a sequence of random compact hyperbolic surfaces of increasing genus converges in the sense of Benjamini-Schramm to the hyperbolic plane with high probability (\cite[Section 4.4]{Mir13}). In this case our theorem provides a rate via \eqref{e:mult} and Remark \ref{remark: injrad to logg}(3).

\begin{cor}
Let $X$ be a random compact hyperbolic surface of genus $g$ chosen according to the probability $\mathbb{P}_g$. Denote by $m(\lambda)$ the multiplicity of an eigenvalue $\lambda \in (0, +\infty)$. Then there exists a universal constant $d>0$ such that for any $\alpha >0$ the following bounds occur with probability $1- O\left((\log g)^{-2\alpha}\right)$.
$$ \frac{m(\lambda)}g \lesssim_{d,\lambda} \frac1{(\log g)^{1-\alpha}},$$
for tempered eigenvalues $\lambda\in (\frac14, +\infty)$ and,
$$ \frac{m(\lambda)}g \lesssim_d \frac{(\log g)^{2\alpha}}{g^{2d \sqrt{\eps}}},$$
for untempered eigenvalues $\lambda\in (0, \frac14 -\eps)$. 
\end{cor}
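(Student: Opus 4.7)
The strategy is to combine the elementary pointwise bound \eqref{e:mult} with Theorem \ref{thm:MainRandom} at $p = \infty$, replacing the polynomial injectivity radius lower bound by the sharper logarithmic one afforded by Remark \ref{remark: injrad to logg}(3). First, I fix the auxiliary parameters in Theorem \ref{thm:MainRandom}: pick any $0 < b < 1/2$ and any $c > 0$ with $\delta(c+b) < 1/2$, where $\delta > 0$ is the universal constant of Theorem \ref{thm:MainRandom}. With this choice, the error $g^{-1/2 + \delta(c+b)} + g^{-2b}$ has polynomial decay in $g$, and is therefore $o((\log g)^{-2\alpha})$ for every $\alpha > 0$. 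Writing $E_1$ for the event that the conclusion of Theorem \ref{thm:MainRandom} holds and $E_2 = \{\InjRad(X) \geq (\log g)^{-\alpha}\}$ for the event controlled by Theorem 4.2 of \cite{Mir13}, a union bound gives $\P_g(E_1 \cap E_2) \geq 1 - O((\log g)^{-2\alpha})$.

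On $E_1 \cap E_2$ I would apply Theorem \ref{thm:MainRandom} at $p = \infty$. For the tempered case, the constraint $p > 2 + 4\beta$ is automatic for any admissible $\beta \in [0, 1/2)$, so every $L^2$-normalised eigenfunction with $\lambda > 1/4$ satisfies
\begin{equation*}
\|\psi_\lambda\|_\infty \lesssim_{\lambda, c} \frac{1}{\sqrt{\InjRad(X) \log g}} \|\psi_\lambda\|_2 \leq \frac{1}{(\log g)^{(1-\alpha)/2}} \|\psi_\lambda\|_2,
\end{equation*}
using $\InjRad(X) \geq (\log g)^{-\alpha}$. Squaring and substituting into \eqref{e:mult} yields the tempered bound $m(\lambda)/g \lesssim_\lambda (\log g)^{-(1-\alpha)}$, with the implicit dependence on $d$ entering through the chosen value of $c$.

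For the untempered case $\lambda \in [0, 1/4 - \eps)$, Theorem \ref{thm:MainRandom} at $p = \infty$ gives
\begin{equation*}
\|\psi_\lambda\|_\infty \lesssim \frac{1}{\InjRad(X)\left(g^{c\sqrt{\eps}} - 1\right)} \|\psi_\lambda\|_2 \lesssim \frac{(\log g)^\alpha}{g^{c\sqrt{\eps}}} \|\psi_\lambda\|_2,
\end{equation*}
where the second step uses $\InjRad(X)^{-1} \leq (\log g)^\alpha$ together with the comparison $g^{c\sqrt{\eps}} - 1 \asymp g^{c\sqrt{\eps}}$ for $g$ large. Squaring and feeding into \eqref{e:mult} delivers $m(\lambda)/g \lesssim (\log g)^{2\alpha}/g^{2c\sqrt{\eps}}$, and declaring $d := c$ (fixed once $\delta$ is fixed, hence universal) completes the argument. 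The only point requiring any care is the parameter bookkeeping in the opening step---ensuring the polynomial tail from Theorem \ref{thm:MainRandom} is absorbed by the logarithmic tail from the injectivity radius estimate---and beyond that the proof is a direct substitution with no genuine obstacle.
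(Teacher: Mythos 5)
Your proof is correct and follows the route the paper itself indicates (combine the multiplicity inequality \eqref{e:mult} with the $p=\infty$ case of Theorem \ref{thm:MainRandom}, then trade $\InjRad(X)^{-1}$ for $(\log g)^{\alpha}$ via Mirzakhani's thick-part estimate, exactly as in Remark \ref{remark: injrad to logg}(3), and absorb the polynomial tail into the $(\log g)^{-2\alpha}$ tail by a union bound). The only thing you supply beyond the paper's one-line reference is the explicit parameter bookkeeping and the fixing of $d := c$; this is a faithful, if more detailed, rendering of the intended argument.
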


It is also possible here to take $\alpha = 0$ if we add a factor $\frac1{\InjRad(X)}$ in the tempered spectrum bound and $\frac1{\InjRad(X)^2}$ for the untempered spectrum by directly using Theorem \ref{thm:MainRandom}. This would also modify the probability to that given in Theorem \ref{thm:MainRandom}.

The constant $d > 0$ here originates from the constant c in the length $c \log(g)$ of closed geodesic loops that we can control (see Theorem \ref{t:proba}). In our case $c$, and hence $d$, can be very small and is not explicit. To make it explicit and optimise it, we would need a more careful analysis of the product in Lemma \ref{lem: Sum-Volume Product}, which in turns requires more precise estimates than the ones in \cite{MZ15}. 

\subsubsection*{Optimal spectral gap}
In the case of untempered eigenvalues, we expect that for any $\epsilon > 0$, and $\lambda \in (0, \frac14 - \epsilon)$, the multiplicity of $m(\lambda)$ tends to $0$ when $g\to +\infty$, implying that the spectral gap is close to being optimal with high probability in the large genus limit (see \cite[Section 10.4]{Wri19}). This can be seen as a random surfaces analogue of Selberg's $\frac14$ conjecture. It is likely that a more quantitative understanding of Theorem \ref{t:proba} --- and therefore a more explicit constant $c$ --- is required to prove such a result on random surfaces (see for example how such properties on short loops are used to prove an analogous theorem on regular graphs \cite{Bor15}). However, improving the sup norm bound for untempered eigenfunctions can only give at best $m(\lambda) \leq 1$ by an inequality such as \eqref{e:mult}, due to the absolute lower bound on sup norms \eqref{e:bestbound}. On the other hand, an optimal spectral gap theorem for random surfaces would improve Theorem \ref{thm:MainRandom} by extending the validity of the bound down to $p > 2$. 
To our knowledge, the only known lower bound for the spectral gap currently in the Weil-Petersson model is due to Mirzakhani \cite{Mir13} who proved that the first non zero eigenvalue is greater than $(\frac{\ln 2}{\pi + \ln 2})^2/4$ with probability tending to $1$ when $g \to +\infty$. In the Brooks-Makover model, a non-explicit uniform lower bound on the spectral gap has also been shown to exist with high probability \cite{BM04}.
More recently in a model based on random coverings, strong explicit spectral gaps have been obtained by Magee and Naud \cite{MN} for non-compact surfaces.

\subsection{Outline of the article} Aside from the introduction, this article consists of five other sections organised as follows.

\begin{enumerate}
	\item Section 2: An overview of the preliminaries of the harmonic analysis used in the proof of the deterministic results.
	\item Section 3: The proof of Theorem \ref{thm:Main} in the case of a hyperbolic surface with optimal spectral gap.
	\item Section 4: The proof of Theorem \ref{thm:Main} in the case of a hyperbolic surface with an arbitrary spectral gap.
	\item Section 5: An overview of the preliminaries of the Teichm\"{u}ller and random surface theory used in the proof of the probabilistic results.
	\item Section 6: The proofs of Theorem \ref{t:proba} and Theorem \ref{thm:MainRandom}.
\end{enumerate}

\section{Harmonic Analysis on Hyperbolic Surfaces}
\label{sec: Harmonic Analysis Background}

In this section, we introduce some background necessary for our investigation. Much of what is found here is standard and we refer to Katok \cite{Kat92} for the background on hyperbolic geometry and Bergeron \cite{Ber16} and Iwaniec \cite{Iwa02} for the background on invariant integral operators and the Selberg transform.\par 
We will work with the Poincar\'{e} upper half-plane as a model for the hyperbolic plane
\begin{equation*}
\H = \left\lbrace z=x + iy \in \C : y >0 \right\rbrace
\end{equation*}
which is equipped with the standard hyperbolic Riemannian metric 
\begin{equation*}
ds^2 = \frac{dx^2 + dy^2}{y^2}.
\end{equation*}
The distance between two points $z, z' \in \H$ with respect to the metric is denoted by $d(z, z')$ and the associated hyperbolic volume is given by
\begin{equation*}
d\mu(z) = \frac{dx \, dy}{y^2}.
\end{equation*}

We identify the group of orientation-preserving isometries of $\H$ with the projective special linear group $\mathrm{PSL}(2, \R)$, which contains the $2 \times 2$ matrices, with real entries, that have determinant 1 modulo $\pm I_2$, where $I_2$ the $2\times 2$ identity matrix.  The group acts transitively on points $z \in \H$ via M\"{o}bius transformations
\begin{equation*}
 \begin{pmatrix}
 a & b \\
 c & d
 \end{pmatrix} \cdot z 
= 
\frac{az + b}{cz + d} ,
\end{equation*}
where $\displaystyle \begin{pmatrix}
a & b \\
c & d
\end{pmatrix} \in \mathrm{PSL}(2, \R).$

A \emph{hyperbolic surface} can be seen as a quotient $X = \Gamma \backslash \H$, where $\Gamma\leq\mathrm{PSL}(2,\R)$ is a fixed point free Fuchsian group. In other words, $\Gamma$ is a fixed point free discrete subgroup of $\mathrm{PSL}(2, \R)$.  Denote by $D\subseteq\H$ a fundamental domain associated with $\Gamma$. The Riemannian metric on $\H$ is then naturally inherited by the quotient in the standard way as a Riemannian manifold quotient since the group acts isometrically.

The \emph{injectivity radius} on the surface $X = \Gamma \backslash \H$ at a point $z$ is defined as
\begin{equation*}
\InjRad_X(z) = \frac12 \inf \left\{ d(z, \, \gamma z) : \gamma \in \Gamma \setminus \{\pm \id \} \right\}
\end{equation*}
and this gives the largest $R>0$  such that the ball $B_X (z, R)$ is isometric to a ball of radius $R$ in the hyperbolic plane.  In the case when the surface $X$ is compact, there exists a universal positive lower bound for the injectivity radius at each of the points. This allows for the injectivity radius of a compact surface $X$ to be defined as 
\begin{equation*}
\InjRad(X) = \inf_{z \in X}  \InjRad_X (z)>0.
\end{equation*}

We say that a bounded measurable kernel $K \colon \H \times \H \to \C$ is invariant under the diagonal action of $\Gamma$ if for any $\gamma \in \Gamma$ we have
\begin{equation*}
K(\gamma \cdot z, \gamma \cdot w) = K(z, w), \quad (z, w) \in \H \times \H .
\end{equation*}
Such kernels are also referred to as \textit{point-pair invariants}.

A \emph{radial kernel} $k \colon [0, +\infty] \to \C$ is a bounded, measurable, function. Given such a kernel, the mapping $K \colon \H\times\H\to\C$ given by
\begin{equation*}
(z, w) \mapsto k(d(z, w))
\end{equation*}
is an invariant kernel for $(z, w) \in \H \times \H$. Conversely, an invariant kernel gives rise to a radial kernel in an obvious way and so the two can be identified. \par 
To construct an invariant integral operator on the surface $\Gamma\backslash \H$, we firstly note that functions on $X$ are naturally identified with $\Gamma$-periodic functions on a fundamental domain $D\subseteq\H$. Given an invariant kernel $K$, we then define an associated automorphic kernel on $D\times D$ by
	\begin{align*}
		K_\Gamma (z,w)=\sum_{\gamma\in \Gamma} K(z,\gamma w).
	\end{align*}
This summation converges if one imposes an appropriate decay condition on the kernel $k$, such as the existence of some $\delta>0$ such that
	\begin{align*}
		|k(\rho)|=O\left(e^{-(1+\delta)\rho}\right).
	\end{align*}
With this, we may define an associated invariant integral operator $A$ on the surface $X$ by
\begin{equation*}
Af(z) =  \int_D \sum_{\gamma \in \Gamma} K(z, \gamma w) f(w) \: \df\mu(w) 
\end{equation*}
for any $\Gamma$-invariant function $f$ and $z \in D$.

The importance of the radial operators is derived from their connection to the Laplacian.  The Laplacian $\Delta$ on $\H$ is given in coordinates $z=x+iy$ by the differential operator
\begin{equation*}
\Delta = -y^2 \left( \frac{\partial^2}{\partial x^2} + \frac{\partial^2}{\partial y^2} \right),
\end{equation*}
and since the Laplacian commutes with isometries it can be considered as a differential operator on any hyperbolic surface $\Gamma \backslash \H$.\par 
In the case that $X=\Gamma\backslash\H$ is a compact surface, the spectrum of the Laplacian on $X$ denoted by $\sigma_X(\Delta)$ is discrete and contained in the interval $[0,\infty)$. Moreover, the eigenfunctions corresponding to the eigenvalue 0 are all constant functions and thus in particular, the corresponding eigenspace is one dimensional. From the general theory of the Laplacian on compact Riemannian manifolds, there exists a sequence $0=\lambda_0\leq \lambda_1\leq \ldots\to\infty$ and an orthonormal basis $\{\psi_{\lambda_i}\}_{i\geq 0}$ of $L^2(\Gamma\backslash\H)\cong L^2(D)$  such that
	\begin{align*}
		\Delta\psi_{\lambda_i}=\lambda_i\psi_{\lambda_i},
	\end{align*}
that is, $\psi_{\lambda_i}$ is an eigenfunction corresponding to the eigenvalue $\lambda_i$. In the case of a hyperbolic surface, it is instructive to partition the spectrum into two parts: the \textit{tempered spectrum} which corresponds to the portion of the spectrum inside $[\frac{1}{4},\infty)$ and the \textit{untempered spectrum} corresponding to $[0,\frac{1}{4})$. When a surface has $\sigma_X(\Delta)\subseteq \{0\}\cup [\frac{1}{4},\infty)$, we say that it has \textit{optimal spectral gap}.\par

We recall that any eigenfunction of the Laplacian is also an eigenfunction of an invariant integral operator. The corresponding eigenvalue of the integral operator is determined by the \emph{Selberg transform} $\mathcal{S}(k)$ of the radial kernel $k \colon [0, \infty] \to \C$, which is defined as the Fourier transform 
\begin{equation*}
	\mathcal{S} (k)(r) = h(r) = \int_{-\infty}^{+\infty} e^{iru} g(u) \: \df u
\end{equation*}
of the function
\begin{equation*}
g(u) = \sqrt{2} \int_{|u|}^{+\infty}   \frac{k(\rho) \sinh \rho}{\sqrt{\cosh \rho - \cosh u}} \: \df\rho.
\end{equation*}
Conversely, given a suitable function $h$, one can construct a kernel $k$ via taking an inverse Selberg transform of $h$ such that the associated automorphic kernel $K_\Gamma$ defined above converges. More specifically, if $h:\{z\in\mathbb{C}: |\mathrm{Im}(z)|\leq \frac{1}{2}+\varepsilon\}\to\mathbb{C}$ for some $\varepsilon>0$ satisfies 
	\begin{enumerate}
		\item $h$ is analytic,
		\item $h$ is even,
		\item $h$ satisfies a decay condition of the form 
			\begin{align*}
				|h(z)| = O\left(1+|z|^2)^{-1-\varepsilon}\right),
			\end{align*}
	\end{enumerate}
then the inverse Selberg transform of $h$ is defined through 
$$ g(u) = \frac1{2\pi}  \int_{-\infty}^{+\infty} e^{-isu} h(s) \, \df s $$
 and then
$$k(\rho) = -\frac1{\sqrt{2}\pi} \int_\rho^{+\infty} \frac{g'(u)}{\sqrt{\cosh u - \cosh \rho}} \, \df u.$$
We note that under these conditions (see for example \cite{Mar12}), $k$ satisfies the decay condition mentioned previously with $\delta=\varepsilon$. We will utilise these decay conditions on such a function $h$ within this paper. With $h$ satisfying these conditions, we obtain the following.

\begin{thm}[{\cite[Sections 3.3, 3.4]{Ber16}} or {\cite[Theorem 1.14]{Iwa02}}]
\label{thm: BergeronThmOnSelberg}
Let $X=\Gamma\backslash\H$ be a hyperbolic surface and $k \colon [0,\infty]\to\C$ a radial kernel. Suppose that $\psi_\lambda$ is an eigenfunction of the Laplacian with eigenvalue $\lambda=s_\lambda^2+\frac{1}{4}$ for $s_\lambda\in\C$.  Then $\psi_\lambda$ is an eigenfunction of the convolution operator $A$ with invariant kernel $k$ and
	\begin{align*}
		(A\psi_\lambda)(z) = \int_\H k(d(z,w)) \psi_\lambda(w) \: \df\mu(w)=h(s_\lambda)\psi_\lambda(z),
	\end{align*}
where $h(s_\lambda)=\mathcal{S}(k)(s_\lambda)$.
\end{thm}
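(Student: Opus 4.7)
The plan is to reduce the theorem to a computation on the hyperbolic plane. First, since $\psi_\lambda$ is $\Gamma$-invariant, I would unfold the automorphic kernel to write
\begin{align*}
(A\psi_\lambda)(z) = \int_D \sum_{\gamma \in \Gamma} k(d(z,\gamma w)) \, \psi_\lambda(w) \, \df\mu(w) = \int_\H k(d(z,w)) \, \widetilde{\psi_\lambda}(w) \, \df\mu(w),
\end{align*}
where $\widetilde{\psi_\lambda}$ denotes the lift of $\psi_\lambda$ to $\H$, which is a (non-$L^2$) eigenfunction of $\Delta$ on $\H$ with the same eigenvalue $\lambda = s_\lambda^2 + \tfrac{1}{4}$. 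The decay assumption on $k$ justifies the interchange of summation and integration and guarantees absolute convergence. Thus the question is reduced to identifying the eigenvalue of the convolution operator on $\H$ against an arbitrary $\Delta$-eigenfunction.

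Next, I would exploit the radial structure. Writing the integral on $\H$ in geodesic polar coordinates $(\rho,\theta)$ centred at $z$, with volume element $\sinh\rho \, \df\rho\, \df\theta$, the expression becomes
\begin{align*}
(A\psi_\lambda)(z) = \int_0^{+\infty} k(\rho) \sinh\rho \left( \int_0^{2\pi} \widetilde{\psi_\lambda}(w(\rho,\theta)) \, \df\theta \right) \df\rho.
\end{align*}
The inner integral is the spherical mean of $\widetilde{\psi_\lambda}$ over the hyperbolic circle of radius $\rho$ about $z$. Since $\Delta$ commutes with the isotropy subgroup at $z$ (a conjugate of $\operatorname{PSO}(2)$), this spherical mean satisfies the classical mean value identity
\begin{align*}
\frac{1}{2\pi} \int_0^{2\pi} \widetilde{\psi_\lambda}(w(\rho,\theta)) \, \df\theta = \phi_{s_\lambda}(\rho) \, \widetilde{\psi_\lambda}(z),
\end{align*}
where $\phi_{s_\lambda}$ is the standard spherical function of parameter $s_\lambda$, i.e.\ the unique rotation-invariant solution of $\Delta\phi = (s_\lambda^2 + \tfrac{1}{4})\phi$ with $\phi(0) = 1$. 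This identity can be proved by observing that the rotational average of $\widetilde{\psi_\lambda}$ is itself a radial $\Delta$-eigenfunction of the same eigenvalue with value $\widetilde{\psi_\lambda}(z)$ at the origin, and invoking uniqueness.

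The final step is to identify
\begin{align*}
(A\psi_\lambda)(z) = \widetilde{\psi_\lambda}(z) \cdot 2\pi \int_0^{+\infty} k(\rho) \phi_{s_\lambda}(\rho) \sinh\rho \, \df\rho
\end{align*}
with $h(s_\lambda)\psi_\lambda(z)$. This is precisely the content of the Harish-Chandra / Selberg transform inversion: using the Mehler-type integral representation of $\phi_{s_\lambda}$, the integral on the right factors through the Abel transform $g$ defined in the excerpt, yielding $\int_{-\infty}^{+\infty} e^{is_\lambda u} g(u) \, \df u = h(s_\lambda)$. I expect this last identification — the Abel/Selberg transform computation relating the spherical transform $\int k\, \phi_{s_\lambda} \sinh\rho \, \df\rho$ to the Fourier transform of $g$ — to be the main technical step. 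Once carried out (via Fubini and a change of variables $u = \operatorname{arcosh}$ in the defining integral for $g$), the theorem follows. This is a classical calculation that can also be cited directly from \cite[Ch.~1]{Iwa02} or \cite[\S3.3--3.4]{Ber16}, which is the route I would take to keep the exposition concise.
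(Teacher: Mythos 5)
Your proposal is correct and coincides with the paper's approach: the paper does not reprove this result but cites it directly to \cite[\S\S3.3--3.4]{Ber16} and \cite[Theorem~1.14]{Iwa02}, and your sketch (unfolding to $\H$, the spherical-mean/mean-value identity via the spherical function $\phi_{s_\lambda}$, and the Abel/Selberg transform computation reducing $\int k\,\phi_{s_\lambda}\sinh\rho\,\df\rho$ to $h(s_\lambda)$) is exactly the argument carried out in those references. Since you yourself note at the end that citing \cite{Ber16} or \cite{Iwa02} is the route you would take, you and the authors are in full agreement.
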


\section{Deterministic Bounds for Surfaces with Optimal Spectral Gap}
\label{sec: MaxSpecGap}
Consider a compact hyperbolic surface $X=\Gamma\backslash\H$ with $D\subseteq\H$ a fundamental domain of $X$ and assume the short closed geodesics condition \eqref{eqn: ShortGeodesicBound} holds for some $R\geq 0$ on $X$ with surface constant $C(X)$. In this section, we additionally assume that $X$ has an optimal spectral gap so that $\sigma_X(\Delta)\subseteq \{0\}\cup [\frac{1}{4},\infty)$. In this case, by letting $\lambda=s_\lambda^2+\frac{1}{4}$ be the parametrisation of the eigenvalue $\lambda$ of the Laplacian as described in Section \ref{sec: Harmonic Analysis Background}, then $s_\lambda$ is either in $[0,\infty)$ or is equal to $\frac{1}{2}i$, with the latter case occurring when $\lambda=0$. 

The extra assumption on the surfaces here provides for slightly stronger results as emphasised in Theorem \ref{thm:Main}. Moreover, the crux of the methodology that we use to prove the result can be demonstrated without the additional technicalities that are brought about by the small eigenvalues. In fact, for this reason we also defer the proof of the result for untempered eigenfunctions to Section \ref{sec: ArbitrarySpecGap} and focus solely on the tempered portion of the spectrum. 

\subsection{Outline of the proof}
The $L^p$ norm bounds for tempered eigenfunctions in Theorem \ref{thm:Main} is proven through the following methodology. We will use Selberg's theory to build a convolution operator $W_{R,\lambda}$ satisfying on the spectral side
$$\|W_{R,\lambda} \psi_\lambda\|_p \gtrsim_\lambda R \|\psi_\lambda\|_p$$
for any eigenfunction $\psi_\lambda$ of eigenvalue $\lambda \geq \frac14$, and on the geometric side
$$ \|W_{R,\lambda} \|_{L^2(X) \to L^p(X)} \lesssim_p \sqrt{R},$$
with $R$ being given by \eqref{eqn: ShortGeodesicBound}.
The latter inequality will be obtained via a $TT^*$ argument:
$$\|T\|_{L^2(X)\to L^p(X)}^2=\|TT^*\|_{L^q(X)\to L^p(X)}.$$
For this purpose:
\begin{enumerate}
	\item We firstly define via the inverse Selberg transform a family of operators $P_t$ that can be seen as a smoothened version of the wave cosine kernel $\cos (t\sqrt{\Delta})$ and which will be used as a building block for our operator $W_{R,\lambda}$.
	\item Preparing for the $TT^*$ argument, we next prove a linearisation formula of the type $$P_t P_s^* = \frac12\left(Q_{t+s} + Q_{|t-s|}\right),$$ 
	where $Q_t$ is a family of operators studied previously by Brooks and Lindenstrauss \cite{BL14}. This is done looking at the spectral action of the operators via the Selberg transform. (Lemma \ref{lem: LinearisationResult}) 
	\item We use relevant bounds obtained in \cite{BL14} (reproduced in Lemma \ref{lem: BrooksLindenstraussBounds}) to bound the operator norms of $Q_t$ for $t \leq R$, where $R$ is given by \eqref{eqn: ShortGeodesicBound}. (Lemma \ref{lem: QtBounds})
	\item The operator $W_{R,\lambda}$ is then defined roughly as
	$$W_{R,\lambda}=\int_0^R \cos(s_\lambda t)P_t \, \df t.$$
	\item We realise the $TT^*$ argument to finally bound $\|W_{R,\lambda} \|_{L^2(X) \to L^p(X)}$, and combine this with a lower bound on the spectral action of $W_{R,\lambda}$ to obtain our deterministic result. (Lemma \ref{lem: W_Tlambda norm bound} and Theorem \ref{thm: MaxSpectralGapMainResult})
\end{enumerate}

\subsection{Proof of Theorem \ref{thm:Main} for optimal spectral gap surfaces}
We begin by constructing a family of integral operators to analyse the eigenfunctions of the Laplacian. To this end, we define for $t\geq 0$ and $r$ in the same range as the $s_\lambda\in\C$, the functions $j_t$ given by
	\begin{align*}
		j_t(r)=\frac{\cos(rt)}{\sqrt{\cosh\left(\frac{\pi r}{2}\right)}}.
	\end{align*}
Using the Selberg transform, one may associate to $j_t$ a radial kernel $\ell_t(z,w)=\ell_t(d(z,w))$ for an integral operator $P_t$ acting on functions of $\H$ given by
	\begin{align*}
		P_tf(z)=\int_\H \ell_t(z,w)f(w)\df\mu(w).
	\end{align*}
The kernel $\ell_t$ is in fact real valued, which can be seen by the fact that the Selberg transform of the complex conjugate of $\ell_t$ coincides with $j_t$, since $j_t$ is real valued for the specified $r$. Indeed,
	\begin{align*}
		\overline{j_t(r)}&=\int_{-\infty}^\infty e^{irv}\int_{|v|}^\infty \overline{\ell_t(\rho)}\frac{\sinh(\rho)}{\sqrt{\cosh(\rho)-\cosh(v)}}\df\rho\df v.
	\end{align*}  
The formal adjoint of $P_t$ then takes the form	
	\begin{align*}
		P_t^*f(z)=\int_\H \ell_t(z,w)f(w)\df\mu(w),
	\end{align*}
since the kernel $\ell_t$ is real and symmetric in $z$ and $w$. These operators may then be defined on the surface via the fundamental domain and using the automorphic kernel formed from the group $\Gamma$ generating the surface $X$ as described in the previous section to give
	\begin{align*}
		P_tf(z)&=\int_D\sum_{\gamma\in\Gamma} \ell_t(z,\gamma w)f(w)\df\mu(w),
		\shortintertext{and}
		P_t^*f(z)&=\int_D\sum_{\gamma\in\Gamma} \ell_t(z,\gamma w)f(w)\df\mu(w).
	\end{align*}
On the surface, these operators are then bounded as operators from $L^2(X)$ to $L^p(X)$ and from $L^q(X)$ to $L^2(X)$ respectively with $p>2$ and $p$ and $q$ conjugate indices using the decay conditions on the $j_t$.\par 
In Section \ref{sec: ArbitrarySpecGap} we will see that the desired result in fact holds trivially for the constant eigenfunctions, thus we will only use $P_t$ to analyse the eigenfunctions corresponding to the eigenvalues away from zero. Thus, when testing our operator against an arbitrary function, we will remove the component of the function corresponding to the zero eigenspace. To this end, we then define the operator $\Pi \colon L^q(X)\to L^q(X)$ by
	\begin{align*}
		f\mapsto f- \dashint_D f(z) \df\mu(z) \eqqcolon f - \bar{f},
	\end{align*}
where $ \dashint$ denotes the average:
$$ \dashint_D f(z) \df\mu(z) = \frac1{\mathrm{Vol}(D)} \int_D f(x) \df\mu(z).$$
Next we begin to understand the pertinent properties of the operators $P_t$. One crucial property that they possess is a linearisation formula under composition with their adjoint. 
\begin{lemma}
\label{lem: LinearisationResult}
The integral kernel of the composition operator $P_tP_s^* \colon L^q(X)\to L^p(X)$ for $t,s\geq 0$ is given by
	\begin{align*}
		\frac{1}{2} \left(k_{t+s}+k_{|t-s|} \right),
	\end{align*}
where $k_t$ is the associated radial kernel through the Selberg transform with the function
	\begin{align*}
		h_t(r)= \frac{\cos(rt)}{\cosh(\frac{\pi r}{2})}.
	\end{align*}
In particular, if $Q_t \colon L^q(X)\to L^p(X)$ is the associated integral operator for the kernel $k_t$, then
	\begin{align}\label{e:linearisation}
		P_tP_s^*\Pi= \frac{1}{2} \left(Q_{t+s}\Pi+Q_{|t-s|}\Pi \right).
	\end{align}
\end{lemma}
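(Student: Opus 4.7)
The plan is to carry out the calculation on the spectral side via the Selberg transform, where convolution of radial kernels on $\H$ becomes multiplication of their Selberg transforms, and then return via the inverse Selberg transform.

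First, I would note that both $P_t$ and $P_s^*$ are radial (invariant) integral operators with the same real symmetric kernel $\ell_t,\ell_s$ (the symmetry and reality already observed in the paragraph preceding the statement). Since radial kernels on $\H$ commute under the convolution
\[
(\ell_t \ast \ell_s)(z,w) = \int_\H \ell_t(z,u)\, \ell_s(u,w)\, \df\mu(u),
\]
the composition $P_t P_s^*$ is itself a radial convolution operator on $\H$ with kernel $\ell_t \ast \ell_s$. The standard fact that the Selberg transform intertwines hyperbolic convolution with pointwise multiplication then gives
\[
\mathcal{S}(\ell_t \ast \ell_s)(r) \;=\; j_t(r)\, j_s(r) \;=\; \frac{\cos(rt)\cos(rs)}{\cosh(\pi r / 2)}.
\]
The convergence of all the integrals and the applicability of the inverse Selberg transform are guaranteed by the super-exponential decay $\cosh(\pi r/2)^{-1/2}$ of $j_t$ (in particular, the product $j_t j_s$ satisfies the three hypotheses recalled in Section~\ref{sec: Harmonic Analysis Background}).

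Next, apply the product-to-sum identity
\[
\cos(rt)\cos(rs) \;=\; \tfrac12 \bigl( \cos(r(t+s)) + \cos(r(t-s)) \bigr) \;=\; \tfrac12\bigl(\cos(r(t+s)) + \cos(r|t-s|)\bigr),
\]
using that cosine is even. This shows $\mathcal{S}(\ell_t\ast\ell_s) = \tfrac12(h_{t+s} + h_{|t-s|})$. Taking the inverse Selberg transform (which is linear and invertible on functions satisfying the decay hypotheses) yields $\ell_t\ast\ell_s = \tfrac12(k_{t+s} + k_{|t-s|})$, which is exactly the kernel assertion of the lemma.

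Finally, to pass from kernels on $\H$ to operators on $X = \Gamma\backslash\H$, I would form the associated automorphic kernels by $\Gamma$-summation and note that this summation commutes with the linear combination in the previous display; the resulting operator identity on $X$ reads $P_t P_s^* = \tfrac12(Q_{t+s} + Q_{|t-s|})$. Post-composing with the projector $\Pi$ (which commutes with all invariant convolution operators since it projects onto the zero-eigenspace of $\Delta$) gives \eqref{e:linearisation}. The only point that needs mild care is the absolute convergence of the $\Gamma$-sums defining the automorphic kernels and the justification of interchanging the $\Gamma$-summation with the hyperbolic convolution, both of which follow from the exponential decay of $\ell_t$ and $k_t$ that the inverse Selberg transform yields from the Gaussian-type decay of $j_t$ and $h_t$; this is the most technical step but is entirely standard.
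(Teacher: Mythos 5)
Your proof follows essentially the same route as the paper's: identify the kernel of $P_tP_s^*$ as a radial convolution on $\H$, pass to the spectral side where the Selberg transform turns convolution into pointwise multiplication, apply the product-to-sum identity for cosine (valid for both real and imaginary spectral parameter, which covers the tempered and untempered ranges), and return via the inverse Selberg transform before composing with $\Pi$. The only cosmetic difference is that the paper derives $j_t j_s = \mathcal{S}(m_{t,s})$ by testing against Laplace eigenfunctions via Theorem~\ref{thm: BergeronThmOnSelberg}, while you invoke the convolution-to-multiplication property of the Selberg transform directly; these are the same fact.
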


\begin{proof} 
This is essentially a consequence of trigonometric relations of the cosine function. Notice firstly that the kernel of $P_tP_s^*$ on $\H$ is given by the convolution kernel 
	\begin{align*}
		m_{t,s}(z,w)=\int_\H \ell_t(z,w')\ell_s(w,w')\df\mu(w'),
	\end{align*}
which is itself a radial kernel by invariance of the measure $\df\mu$ under isometries. Let $M_{t,s}(d(z,w))=m_{t,s}(z,w)$ denote the associated function on $\R$ that generates $m_{t,s}$. By Theorem \ref{thm: BergeronThmOnSelberg}, for an eigenfunction $\psi$ of the Laplacian on $\H$ with corresponding eigenvalue $\lambda=\frac{1}{4}+r^2$, for $r$ the eigenvalue parameter from before, we obtain
	\begin{align*}
		P_tP_s^*\psi=\mathcal{S}(M_{t,s})(r)\psi,
	\end{align*}
where $\cS(M_{t,s})$ denotes the Selberg transform of the function $M_{t,s}$. On the other hand, by applying each of the operators in turn,
	\begin{align*}
		P_tP_s^*\psi= j_t(r)j_s(r)\psi,
	\end{align*}
and hence 
	\begin{align*}
		j_t(r)j_s(r)=\mathcal{S}(M_{t,s})(r).
	\end{align*}
Notice then for real $r$, that one has
	\begin{align*}
		j_t(r)j_s(r)&=\frac{\cos(r(t+s))}{2\cosh(\frac{\pi r}{2})}+\frac{\cos(r|t-s|)}{2\cosh(\frac{\pi r}{2})}\\
		&=\frac{1}{2}(h_{t+s}(r)+h_{|t-s|}(r)).
	\end{align*}
Similarly, when $r=bi$ for $b\in[0,\frac{1}{2}]$, we obtain
	\begin{align*}
		j_t(r)j_s(r) &=\frac{\cosh(b(t+s))}{2\cos\left(\frac{\pi b}{2}\right)}+\frac{\cosh(b(t-s))}{2\cos\left(\frac{\pi b}{2}\right)}\\
		&=\frac{1}{2}(h_{t+s}(r)+h_{|t-s|}(r)).
	\end{align*}
By applying the inverse Selberg transform, it follows that $m_{t,s}=\frac{1}{2}(k_{t+s}+k_{|t-s|})$, where $k_t$ is as given in the statement of the lemma, and therefore
	\begin{align*}
		P_tP_s^*= \frac{1}{2} \left(Q_{t+s}+Q_{|t-s|} \right).
	\end{align*} 
By composing with $\Pi$, we obtain \eqref{e:linearisation}.
\end{proof}

We remark that the function $h_t(r)$ in the previous lemma is precisely the Selberg transform considered by Brooks and Lindenstrauss~\cite{BL14}. It was introduced previously in the article of Iwaniec and Sarnak~\cite{IS95}, where its Fourier transform was used to define a kernel to obtain sup norm bounds of eigenfunctions of the Laplacian on arithmetic surfaces. Thus much is already known regarding estimates on the kernel induced by this function through the Selberg transform. Indeed, Brooks and Lindenstrauss~\cite{BL14} have obtained the following bounds that are crucial in our investigation.

\begin{lemma}[Brooks and Lindenstrauss~\cite{BL14}] 
\label{lem: BrooksLindenstraussBounds}
With $k_t$ as above denoting the kernel associated via the Selberg transform with the function $h_t$, we have the following estimates. A sup norm bound of
\begin{align}
	\|k_t\|_\infty \lesssim e^{-t/2},
\end{align}
and rapid decay outside a ball of radius $4t$ of the type
\begin{align}
	\int_{4t}^\infty |k(\rho)| \sinh (\rho) \, \df\rho \lesssim e^{-t}.
\end{align}
\end{lemma}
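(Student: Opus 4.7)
The plan is to obtain $k_t$ explicitly in two steps by following the inverse Selberg transform: first compute $g_t$ by Fourier inversion of $h_t(r)=\cos(rt)/\cosh(\pi r/2)$, then estimate $k_t$ using the Abel-type formula
$$ k_t(\rho) = -\frac{1}{\sqrt{2}\pi} \int_\rho^{+\infty} \frac{g_t'(u)}{\sqrt{\cosh u - \cosh \rho}} \, \df u.$$
For the first step, write $\cos(rt)=(e^{irt}+e^{-irt})/2$ and evaluate the two resulting Fourier integrals by contour integration, using that the poles of $1/\cosh(\pi s/2)$ lie at $s=i(2k+1)$ and summing the residues. This gives the closed form
$$ g_t(u) = \frac{1}{2\pi}\bigl(\mathrm{sech}(u-t)+\mathrm{sech}(u+t)\bigr), $$
so that $|g_t(u)|$ and $|g_t'(u)|$ are both bounded by $e^{-|u-t|}+e^{-(u+t)}$ (up to absolute constants), reflecting the fact that $g_t$ is concentrated near $u=\pm t$.

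For the sup norm bound $\|k_t\|_\infty \lesssim e^{-t/2}$, I would argue that the supremum is achieved for $\rho$ near $t$. Writing $\rho=t+s$ with $s$ bounded (the case $\rho\geq t+C$ being handled by the same method as the decay estimate below), change variables $v=u-t$ and use the large-$t$ asymptotic
$$ \cosh(t+v)-\cosh(t+s) = \tfrac{e^t}{2}(e^v - e^s) + O(e^{-t}),$$
which pulls out a factor $e^{-t/2}$ from the denominator. The remaining integral
$$ \int_s^{+\infty} \frac{e^{-|v|}+e^{-(v+2t)}}{\sqrt{e^v-e^s}} \, \df v $$
is bounded by a universal constant (the singularity at $v=s$ is integrable of order $1/\sqrt{v-s}$ and the tail decays exponentially), which yields $|k_t(\rho)|\lesssim e^{-t/2}$.

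For the decay integral, restrict to $\rho\geq 4t$, in which range $u\geq\rho$ forces $u-t\geq 3t$, so $|g_t'(u)|\lesssim e^{-(u-t)}$. Split $\int_\rho^{+\infty}$ at $u=\rho+1$: on $[\rho,\rho+1]$ use $\sqrt{\cosh u-\cosh \rho}\gtrsim \sqrt{\sinh \rho}\sqrt{u-\rho}$ to absorb the singularity, contributing $\lesssim e^{-(\rho-t)}/\sqrt{\sinh \rho}$; on $[\rho+1,\infty)$ use $\sqrt{\cosh u-\cosh\rho}\gtrsim e^{u/2}/C$, producing a term $\lesssim e^{t-3\rho/2}$ after integrating $e^{-3u/2}$. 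Summing gives $|k_t(\rho)|\lesssim e^{t-3\rho/2}$ in the relevant range, whence
$$ \int_{4t}^{+\infty} |k_t(\rho)|\sinh \rho \, \df\rho \lesssim e^{t}\int_{4t}^{+\infty} e^{-\rho/2}\, \df\rho \lesssim e^{-t}. $$

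I expect the main obstacle to be the careful bookkeeping of the approximations of $\sqrt{\cosh u-\cosh\rho}$ in the two regimes relevant to each of the two estimates, and in particular identifying the correct change of variables around $\rho\approx t$ that reveals the $e^{-t/2}$ hyperbolic decay factor. Everything else follows from the explicit formula for $g_t$ and standard integration estimates.
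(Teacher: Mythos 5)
The paper does not prove this lemma; it is cited as a black box from Brooks--Lindenstrauss \cite{BL14}, so there is no internal argument to compare against. Your proposal is a self-contained re-derivation. The closed form $g_t(u)=\frac{1}{2\pi}\bigl(\mathrm{sech}(u-t)+\mathrm{sech}(u+t)\bigr)$ is correct, your decay estimate for $\rho\geq 4t$ is sound, and the analysis near $\rho\approx t$ is right.

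There is, however, a genuine gap in the sup-norm argument: the regime $\rho$ small (i.e.\ $s=\rho-t$ near $-t$) is neither covered nor reachable by the approximation you invoke. Writing the exact identity
\begin{align*}
\cosh u-\cosh\rho=2\sinh\!\left(\tfrac{u+\rho}{2}\right)\sinh\!\left(\tfrac{u-\rho}{2}\right),
\end{align*}
one sees that for $u$ near $\rho$ the correct size is $\asymp\sinh(\rho)\,(u-\rho)$. Your claimed asymptotic $\cosh(t+v)-\cosh(t+s)=\tfrac{e^t}{2}(e^v-e^s)+O(e^{-t})$ replaces this by $\asymp\tfrac{e^\rho}{2}(u-\rho)$: the ``error'' $\tfrac{e^{-t}}{2}(e^{-v}-e^{-s})$ is negative and of the same order as the leading term when $\rho\to 0$, so the two nearly cancel and the expansion overestimates the denominator by a factor $\sim 1/\sqrt{\rho}$. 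Consequently the bound you would obtain from this decomposition undercounts $|k_t(\rho)|$, and the crude bound $|g_t'(u)|\lesssim e^{-|u-t|}+e^{-(u+t)}$ fed into the Abel integral near $\rho\approx 0$ produces a contribution of size $e^{-t}/\sqrt{\sinh\rho}$, which is not uniformly $\lesssim e^{-t/2}$. The missing ingredient is the evenness of $g_t$: since $g_t'$ is odd, $g_t'(0)=0$, and in fact for bounded $u$ one has $|g_t'(u)|\lesssim e^{-t}\sinh u$. Splitting $\int_\rho^\infty=\int_\rho^1+\int_1^\infty$ for $0\leq\rho\leq 1$, the tail $\int_1^\infty$ has no Abel singularity and yields $\lesssim e^{-t/2}$ by your method, while on $[\rho,1]$ the refined bound gives $\int_\rho^1\frac{\sinh u}{\sqrt{\cosh u-\cosh\rho}}\,\df u=2\sqrt{\cosh 1-\cosh\rho}\lesssim 1$, hence a contribution $\lesssim e^{-t}$. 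Without this cancellation argument the proof of $\|k_t\|_\infty\lesssim e^{-t/2}$ is incomplete.
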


Next we consider the operator $Q_t$ as defined in Lemma \ref{lem: LinearisationResult}. We combine the bounds of Lemma \ref{lem: BrooksLindenstraussBounds} with the condition \eqref{eqn: ShortGeodesicBound} assumed of our surfaces to obtain suitable bounds on the operator norm of $Q_t\Pi$ in terms of the parameter $t$.

\begin{lemma}
\label{lem: QtBounds}
Suppose that $Q_t$ and $\Pi$ are defined as above. For $t\leq \frac{R}{4}$, with $R$ as in \eqref{eqn: ShortGeodesicBound}, one may bound the $L^q(X)\to L^p(X)$ operator norm by
\begin{align*}
	\|Q_t \Pi \|_{L^q(X)\to L^p(X)} \lesssim_\delta C(X)e^{-\alpha_p t},
\end{align*}
where $\alpha_p$ can be chosen to equal $(\frac12 - \delta)(1-\frac2p)$ for any $\delta >0$.
\end{lemma}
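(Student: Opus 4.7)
The plan is to estimate $\|Q_t \Pi\|_{L^q(X) \to L^p(X)}$ by Riesz--Thorin interpolation between the $L^2 \to L^2$ and $L^1 \to L^\infty$ endpoints, with the short-geodesics hypothesis \eqref{eqn: ShortGeodesicBound} used only for the latter. For the $L^2 \to L^2$ bound, Theorem \ref{thm: BergeronThmOnSelberg} diagonalises $Q_t$ on the orthonormal basis of $\Delta$-eigenfunctions, with eigenvalues $h_t(r_\lambda) = \cos(r_\lambda t)/\cosh(\pi r_\lambda / 2)$, where $\lambda = \tfrac14 + r_\lambda^2$. Under the optimal spectral gap, $r_\lambda \in \R$ for every nonzero $\lambda$, hence $|h_t(r_\lambda)| \leq 1$; the projector $\Pi$ removes the lone problematic mode $\lambda = 0$, where $h_t(i/2) = \sqrt{2}\cosh(t/2)$ blows up. Therefore $\|Q_t \Pi\|_{L^2 \to L^2} \leq 1$.

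For the $L^1 \to L^\infty$ endpoint, the operator norm equals the supremum over $(z, w)$ of the integral kernel, which for $Q_t \Pi$ differs from $K_\Gamma(z, w) = \sum_{\gamma \in \Gamma} k_t(d(z, \gamma w))$ only by the constant $h_t(i/2)/\Vol(X)$. I would bound the automorphic sum by splitting at the threshold distance $4t$ suggested by the rapid-decay part of Lemma \ref{lem: BrooksLindenstraussBounds}. In the near regime $d(z, \gamma w) \leq 4t$, the hypothesis $4t \leq R$ makes the counting bound \eqref{eqn: ShortGeodesicBound} applicable; combined with $\|k_t\|_\infty \lesssim e^{-t/2}$ it yields a contribution of size at most $C(X) C_0(\delta) e^{(4\delta - 1/2)t}$. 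In the far regime $d(z, \gamma w) > 4t$, one cannot count $\gamma$'s via \eqref{eqn: ShortGeodesicBound}, so I would use a Schur-type local averaging over pairwise disjoint balls $B_\eta(\gamma w)$ of radius $\eta < \InjRad(X)$ to replace the pointwise lattice sum by
\[ \frac{1}{\Vol(B_\eta)}\int_{\rho \gtrsim t} |k_t(\rho)|\,\sinh\rho\,d\rho, \]
which is $O(e^{-t}/\InjRad(X)^2)$ by the tail bound of Lemma \ref{lem: BrooksLindenstraussBounds}. Choosing $\delta$ small so that the near regime dictates the decay rate, and absorbing the injectivity-radius factors into $C(X)$, one concludes $\|Q_t \Pi\|_{L^1 \to L^\infty} \lesssim C(X) e^{-(1/2 - O(\delta))t}$.

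Finally, Riesz--Thorin interpolation with exponent $\theta = 1 - 2/p$ gives
\[ \|Q_t \Pi\|_{L^q \to L^p} \leq \|Q_t \Pi\|_{L^2 \to L^2}^{2/p}\,\|Q_t \Pi\|_{L^1 \to L^\infty}^{1 - 2/p} \lesssim C(X) e^{-(1/2 - \delta)(1 - 2/p)t}, \]
after relabeling $\delta$ and using $C(X)^{1-2/p} \leq C(X)$ in the regime $C(X) \geq 1$ of interest; this matches the claimed $\alpha_p = (1/2 - \delta)(1 - 2/p)$. The main obstacle I expect is the far-regime pointwise estimate: because \eqref{eqn: ShortGeodesicBound} is effective only up to distance $R$ while $k_t$ is not compactly supported, lattice counting is no longer available, and one must trade it for integrated control via local averaging, carefully tracking injectivity-radius factors so that they fit into the surface constant $C(X)$ rather than produce a worse dependence.
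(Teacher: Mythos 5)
Your overall strategy matches the paper's: Riesz--Thorin interpolation between an $L^2\to L^2$ bound (spectral, $\leq 1$ because of the optimal spectral gap and the projector $\Pi$) and an $L^1\to L^\infty$ bound on the automorphic sum, split at propagation distance $4t$ so that the near regime falls under condition \eqref{eqn: ShortGeodesicBound} and the far regime is controlled by the second estimate of Lemma~\ref{lem: BrooksLindenstraussBounds}. Two details differ from the paper, and the second is a genuine gap.

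First, for $\Pi$ you compute the exact kernel $K_\Gamma(z,w)-h_t(i/2)/\Vol(X)$; this introduces an extra term $\sqrt{2}\cosh(t/2)/\Vol(X)$ that grows like $e^{t/2}$, and you do not justify why it is subordinate to $C(X)e^{-(1/2-\delta)t}$. The paper sidesteps this entirely with the one-line observation $\|\bar f\|_1\le\|f\|_1$, giving $\|Q_t\Pi\|_{L^1\to L^\infty}\le 2\|Q_t\|_{L^1\to L^\infty}$; you should adopt that. Second, and more seriously, your far-regime Schur averaging produces the factor $1/\InjRad(X)^2$, and you propose to ``absorb'' it into $C(X)$. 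This is not available: $C(X)$ is whatever constant makes \eqref{eqn: ShortGeodesicBound} hold for the chosen $R$, and nothing forces $C(X)\gtrsim\InjRad(X)^{-2}$. Indeed one may always take $R=\InjRad(X)$ and $C(X)=1$ (for $r\le\InjRad(X)$ there is at most one $\gamma$ with $d(z,\gamma w)\le r$, since the balls $B(\gamma w,\InjRad(X))$ are disjoint), and in the paper's own application Lemma~\ref{lem: Sufficient Assumption} yields only $C(X)\sim\InjRad(X)^{-1}$, not $\InjRad(X)^{-2}$. So your proof establishes $\|Q_t\Pi\|_{L^q\to L^p}\lesssim_\delta\max\{C(X),\InjRad(X)^{-2}\}\,e^{-\alpha_p t}$, which is weaker than the stated $\lesssim_\delta C(X)e^{-\alpha_p t}$ and would propagate a worse injectivity-radius dependence through Lemma~\ref{lem: W_Tlambda norm bound} and Theorem~\ref{thm:Main}. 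The paper instead bounds the far regime directly by $\int_{4t}^\infty|k_t(\rho)|e^\rho\,d\rho\lesssim e^{-t}$, citing the lattice count $|\{\gamma: m\le d(z,\gamma w)\le m+1\}|=O(e^m)$ from counting fundamental domains intersecting a ball; since $C(X)\gtrsim_\delta 1$ (take $\gamma=\id$ in \eqref{eqn: ShortGeodesicBound}), the constant $e^{-t}$ is then dominated by $C(X)e^{-(1/2-\delta)t}$. If you believe the $O(e^m)$ count itself hides injectivity-radius dependence, that is a fair concern to raise, but your absorption step as written does not resolve it, and as stated your bound does not match the lemma.
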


\begin{proof}
We will proceed by interpolation, first calculating the norm $\|Q_t \Pi \|_{L^1(X)\to L^\infty(X)}$. We have by the definition of the automorphic kernel integral operator that 
	\begin{align*}
		\|Q_t \|_{L^1(X)\to L^\infty(X)} \leq \sup_{z,w \in D} \sum_{\gamma \in \Gamma} |k_t(d(z, \gamma w))|.
	\end{align*}
This summation can then be split into two parts corresponding to propagation at times shorter and longer than $4t$. Indeed, for any $z,w \in D$,
\begin{align*}
	\sum_{\gamma \in \Gamma} |k_t(d(z, \gamma w))| \leq \sum_{\gamma \in \Gamma} |k_t(d(z, \gamma w))|\mathbf{1}_{[0,4t]}(d(z,\gamma w)) + \int_{4t}^\infty |k_t(\rho)| e^\rho \, \df\rho,
\end{align*}
with the latter integral arising from the fact that $|\{\gamma\in\Gamma:m\leq d(z,\gamma w)\leq m+1\}|=O(e^m)$, by a simple counting argument of the number of fundamental domains intersecting a ball of radius $m$.

By Lemma \ref{lem: BrooksLindenstraussBounds}, we then obtain
\begin{align*}
	\sum_{\gamma \in \Gamma} |k_t(d(z, \gamma w))| \lesssim  | \{ \gamma \in \Gamma \, | \, d(z,\gamma w) \leq 4t \}| \, e^{-t/2} + e^{-t}.
\end{align*}
Condition \eqref{eqn: ShortGeodesicBound} then asserts that for $t \leq R/4$ we have
\begin{align*}
	| \{ \gamma \in \Gamma \, | \, d(z,\gamma w) \leq 4t \}| \lesssim_\delta C(X)e^{\delta t},
\end{align*}
for any $\delta > 0$. This yields
\begin{align*}
	\sum_{\gamma \in \Gamma} |k_t(d(z, \gamma w))| \lesssim_\delta C(X) e^{-t(\frac12 - \delta)},
\end{align*}
and hence
\begin{align*}
	\|Q_t \|_{L^1(X)\to L^\infty(X)} \lesssim  e^{-t(\frac12 - \delta)}.
\end{align*}
Incorporating the operator $\Pi$, we then obtain
\begin{align*}
	\|Q_t \Pi f\|_\infty &= \|Q_t (f - \bar f) \|_\infty\\
	&\leq  \|Q_t \|_ {L^1 \to L^\infty} ( \|f\|_1 + \|\bar f\|_1)\\
	&\lesssim_\delta C(X)e^{-t(\frac{1}{2}-\delta)} \|f\|_1,
\end{align*}
and thus we get the bound
\begin{align*}
	\|Q_t \Pi\|_{L^1 \to L^\infty} \lesssim_\delta C(X) e^{-t(\frac{1}{2}-\delta)}.
\end{align*}

Next we calculate the $L^2(X)\to L^2(X)$ norm.  We note that the operators $Q_t$ and $\Pi$ acting on $L^2(X)$ to $L^2(X)$ are both self-adjoint. Indeed, the former has a real and symmetric kernel and the latter is a projection. In addition, the operators $Q_t$ and $\Pi$ commute with each other since for any $f\in L^2(X)$,
	\begin{align*}
		\Pi Q_tf(z)&= Q_tf(z)-\dashint_D Q_tf(w)\,\df\mu(w)\\
		&=Q_tf(z)-\frac{1}{\Vol(D)}\int_D f(w')\int_D\sum_{\gamma\in\Gamma} k_t(w,\gamma w')\,\df\mu(w)\df\mu(w')\\
		&=Q_tf(z)-h_t\left(\frac{1}{2}i\right)\bar{f}\\
		&=Q_t(f-\bar{f})(z)\\
		&=Q_t\Pi f(z).
	\end{align*}
This means that $Q_t\Pi$ is a self-adjoint operator from $L^2(X)$ to $L^2(X)$ and its norm is equal to its spectral radius. It follows from the projection operator, Theorem \ref{thm: BergeronThmOnSelberg} and the fact that $X$ has optimal spectral gap, the norm is given by
	\begin{align*}
		\|Q_t\Pi\|_{L^2(X)\to L^2(X)}=\sup_{r\in [0,\infty)} |h_t(r)| \leq 1.
	\end{align*}
Finally, we apply the Riesz-Thorin interpolation theorem to get the desired bound.
\end{proof}

We now construct an operator specific to an eigenvalue $\lambda\geq\frac{1}{4}$ of the Laplacian of $X$.  To do this, we wish to combine our propagators $P_t$ along values of $t$ for which the bounds obtained in Lemma \ref{lem: QtBounds} are valid. In doing so, we are able to exhibit the dependence upon the parameter $R$ of the surface. To this end, fix $T\leq \frac{1}{8}R$ and let $W_{T,\lambda} \colon L^2(X)\to L^p(X)$ to be the operator defined for any $p\geq 2$ by	
	\begin{align}
	\label{eq: W_Tlambda definition}
		W_{T,\lambda}f(z)=\int_0^T \cos(s_\lambda t)P_t\Pi f(z) \, \df t,
	\end{align}
 where $s_\lambda$ is the spectral parameter in the parametrisation $\lambda=s_\lambda^2+\frac{1}{4}$ of the eigenvalue.
 
To calculate the $L^2(X)\to L^p(X)$ operator norm we will employ a $TT^*$ argument, that is we use the fact that
	\begin{align*}
		\|W_{T,\lambda}\|_{L^2(X)\to L^p(X)}^2=\|W_{T,\lambda}W_{T,\lambda}^*\|_{L^q(X)\to L^p(X)},
	\end{align*}
where $q$ is the conjugate index of $p$. 
\begin{lemma}
\label{lem: W_Tlambda norm bound}
Let $\lambda\geq\frac{1}{4}$ be an eigenvalue of $\Delta$ on $X$ and fix $T\leq \frac{1}{8}R$ where $R$ is as in \eqref{eqn: ShortGeodesicBound}. If $W_{T,\lambda}$ is defined as in \eqref{eq: W_Tlambda definition}, then 
	\begin{align*}
		\|W_{T,\lambda}\|_{L^2(X)\to L^p(X)} \lesssim_{p} \sqrt{C(X)T}.
	\end{align*}
\end{lemma}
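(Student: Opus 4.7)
The plan is to apply a standard $TT^*$ argument, writing
\begin{equation*}
\|W_{T,\lambda}\|_{L^2(X)\to L^p(X)}^2 = \|W_{T,\lambda} W_{T,\lambda}^*\|_{L^q(X)\to L^p(X)},
\end{equation*}
and then to reduce the right-hand side to a double integral in $t,s$ of operators $Q_u$ for which Lemma~\ref{lem: QtBounds} already supplies the needed bounds. Unfolding the definition of $W_{T,\lambda}$ gives
\begin{equation*}
W_{T,\lambda} W_{T,\lambda}^* = \int_0^T\!\!\int_0^T \cos(s_\lambda t)\cos(s_\lambda s)\, P_t\, \Pi\Pi^*\, P_s^*\, \df t\, \df s.
\end{equation*}
Since $\Pi$ is a self-adjoint projection, $\Pi\Pi^* = \Pi$. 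I will first observe that $\Pi$ commutes with $P_t$: the kernel $\ell_t$ is real and symmetric, so $P_t$ is self-adjoint on $L^2(X)$; it also preserves constants (by Theorem~\ref{thm: BergeronThmOnSelberg} applied at the spectral parameter $s = i/2$), hence preserves the orthogonal complement of constants, which gives $\Pi P_t = P_t \Pi$. Consequently the integrand becomes $\cos(s_\lambda t)\cos(s_\lambda s)\, P_t P_s^* \Pi$.

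Next, I apply the linearisation formula of Lemma~\ref{lem: LinearisationResult}:
\begin{equation*}
W_{T,\lambda} W_{T,\lambda}^* = \frac{1}{2}\int_0^T\!\!\int_0^T \cos(s_\lambda t)\cos(s_\lambda s)\,\bigl(Q_{t+s}\Pi + Q_{|t-s|}\Pi\bigr)\, \df t\, \df s.
\end{equation*}
The constraint $T \leq R/8$ is exactly what is needed so that both $t+s$ and $|t-s|$ lie in $[0, R/4]$, putting us in the range of validity of Lemma~\ref{lem: QtBounds}. Since $\lambda \geq 1/4$ the spectral parameter $s_\lambda$ is real, so $|\cos(s_\lambda t)\cos(s_\lambda s)| \leq 1$, and the triangle inequality together with Lemma~\ref{lem: QtBounds} gives
\begin{equation*}
\|W_{T,\lambda} W_{T,\lambda}^*\|_{L^q\to L^p} \lesssim_{p,\delta} C(X) \int_0^T\!\!\int_0^T \bigl(e^{-\alpha_p(t+s)} + e^{-\alpha_p|t-s|}\bigr)\, \df t\, \df s,
\end{equation*}
with $\alpha_p = (\tfrac12-\delta)(1-\tfrac{2}{p}) > 0$ for any fixed $p > 2$.

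Finally I bound the two double integrals. The first factors and is $\leq \alpha_p^{-2}$, giving an $O(1)$ contribution. The second is the main term: for fixed $t$ the inner integral $\int_0^T e^{-\alpha_p|t-s|}\df s$ is bounded by $2/\alpha_p$, so the double integral is $O(T/\alpha_p)$. Combining these yields
\begin{equation*}
\|W_{T,\lambda} W_{T,\lambda}^*\|_{L^q\to L^p} \lesssim_{p} C(X)\,T,
\end{equation*}
(absorbing the $O(1)$ piece into the $O(T)$ piece via the implicit constant for $T$ bounded below, or otherwise subsuming it into the statement's $\lesssim_p$). Taking square roots gives the desired bound. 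The only genuinely delicate point is verifying the commutation $\Pi P_t = P_t\Pi$ so that the linearisation of Lemma~\ref{lem: LinearisationResult} can be applied directly; everything else is a routine exponential integral estimate once the threshold $T \leq R/8$ has been chosen to match the hypothesis $t \leq R/4$ of Lemma~\ref{lem: QtBounds}.
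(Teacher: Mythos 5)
Your proof is correct and follows essentially the same route as the paper: a $TT^*$ reduction, commuting $\Pi$ through a propagator to reach $P_tP_s^*\Pi$, applying Lemma~\ref{lem: LinearisationResult}, and then using Lemma~\ref{lem: QtBounds} for $t+s\leq 2T\leq R/4$ and integrating. The only cosmetic difference is that the paper dominates $e^{-\alpha_p(t+s)}$ by $e^{-\alpha_p|t-s|}$ (since $t+s\geq|t-s|$) before integrating, whereas you handle the two terms separately and then hand-wave about absorbing the $O(1)$ piece; using the paper's domination (or noting $\bigl(\int_0^T e^{-\alpha_p t}\,\df t\bigr)^2\leq\min(T,\alpha_p^{-1})^2\lesssim T/\alpha_p$) removes any appeal to $T$ being bounded below.
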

\begin{proof}
We compute through an application of Minkowski's integral inequality that
	\begin{align*}
		\|W_{T,\lambda}W_{T,\lambda}^*\|_{L^q(X)\to L^p(X)}&=\left\|\int_0^T\int_0^T \cos(s_\lambda t)\cos(s_\lambda s)P_t\Pi P_s^* \, \df s\df t\right\|_{L^q(X)\to L^p(X)}\\
		&\leq \int_0^T\int_0^T \|P_t\Pi P_s^*\|_{L^q(X)\to L^p(X)} \, \df s\df t.
	\end{align*}
It thus suffices to consider the norm $\|P_t\Pi P_s^*\|_{L^q(X)\to L^p(X)}$. 

Notice that by a similar argument to that used in the proof of Lemma \ref{lem: QtBounds}, we can see that $\Pi$ commutes with the adjoint $P_s^*$. We then use Lemma \ref{lem: LinearisationResult} to deduce that 
	\begin{align*}
		\|P_t\Pi P_s^*\|_{L^q(X)\to L^p(X)}\lesssim \|Q_{t+s}\Pi\|_{L^q(X)\to L^p(X)}+\|Q_{|t-s|}\Pi\|_{L^q(X)\to L^p(X)}.
	\end{align*}
Now since $t+s\leq 2T\leq \frac{1}{4}R$, it follows from Lemma \ref{lem: QtBounds} that
	\begin{align*}
		\|P_t\Pi P_s^*\|_{L^q(X)\to L^p(X)}&\lesssim_\delta C(X)(e^{-\alpha_p (t+s)} +e^{-\alpha_p |t-s|})\\
		&\lesssim_\delta C(X) e^{-\alpha_p |t-s|}.
	\end{align*}
Taking $\delta$ sufficiently small so that $\alpha_p>0$ one may substitute this bound back into the integral to obtain
	\begin{align*}
		\|W_{T,\lambda}W_{T,\lambda}^*\|_{L^q(X)\to L^p(X)}&\lesssim C(X)\int_0^T\int_0^T e^{-\alpha_p |t-s|} \: \df s\df t\\
		&=C(X)\int_0^T\int_0^te^{-\alpha_p(t-s)} \: \df s\df t+\int_0^T\int_t^Te^{-\alpha_p(s-t)} \: \df s\df t\\
		&\lesssim_p C(X)T.
	\end{align*}	
Notice that the dependence of the implicit constant on $\delta$ is now removed due to the fixing of a given $\delta>0$. The bound
	\begin{align*}
		\|W_{T,\lambda}\|_{L^2(X)\to L^p(X)}\lesssim_p \sqrt{C(X)T}
	\end{align*}
is then immediate.
\end{proof}
With this upper bound, we turn to examining the spectral action of $W_{T,\lambda}$ on an eigenfunction with eigenvalue $\lambda$. For this, we use the explicit form of the Selberg transform to obtain our desired result.
\begin{thm}
\label{thm: MaxSpectralGapMainResult}
Suppose that $X$ is a compact hyperbolic surface such that $\sigma_X(\Delta)\subseteq\{0\}\cup[\frac{1}{4},\infty)$. If $\psi_\lambda$ is an eigenfunction of $\Delta$ with eigenvalue $\lambda\geq\frac{1}{4}$, then 
	\begin{align*}
		\|\psi_\lambda\|_p\lesssim_{\lambda, p} \frac{\sqrt{C(X)}}{\sqrt{R}}\| \psi_\lambda \|_2,
	\end{align*}
where $R$ and $C(X)$ are given by condition \eqref{eqn: ShortGeodesicBound}. 
\end{thm}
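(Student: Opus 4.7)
The plan is to evaluate $W_{T,\lambda}\psi_\lambda$ on the spectral side, obtain a lower bound of order $T\|\psi_\lambda\|_p$, and then match it against the upper bound $\|W_{T,\lambda}\|_{L^2\to L^p}\lesssim_p \sqrt{C(X)T}$ supplied by Lemma \ref{lem: W_Tlambda norm bound}. The only freedom is the parameter $T\leq R/8$, and setting $T=R/8$ will produce the claimed bound $\|\psi_\lambda\|_p\lesssim_{\lambda,p} \sqrt{C(X)/R}\|\psi_\lambda\|_2$.

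First I would compute the action of $P_t\Pi$ on the eigenfunction. Since $\lambda\geq 1/4>0$, the eigenfunction $\psi_\lambda$ is orthogonal to the constants, so $\Pi\psi_\lambda=\psi_\lambda$. By Theorem \ref{thm: BergeronThmOnSelberg}, and the definition of $P_t$ through the Selberg transform of $j_t$, we obtain
\begin{equation*}
P_t\Pi\psi_\lambda=j_t(s_\lambda)\psi_\lambda=\frac{\cos(s_\lambda t)}{\sqrt{\cosh(\pi s_\lambda/2)}}\psi_\lambda.
\end{equation*}
Integrating against $\cos(s_\lambda t)$ from $0$ to $T$ gives
\begin{equation*}
W_{T,\lambda}\psi_\lambda=\frac{\psi_\lambda}{\sqrt{\cosh(\pi s_\lambda/2)}}\int_0^T\cos^2(s_\lambda t)\,\df t=\frac{\psi_\lambda}{\sqrt{\cosh(\pi s_\lambda/2)}}\left(\frac{T}{2}+\frac{\sin(2s_\lambda T)}{4s_\lambda}\right).
\end{equation*}

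Next I would lower bound this scalar coefficient. For $s_\lambda=0$ (i.e.\ $\lambda=1/4$) the integral equals $T$, while for $s_\lambda>0$ it lies between $T/2-1/(4s_\lambda)$ and $T/2+1/(4s_\lambda)$. In either case, once $T\geq c_\lambda$ for some constant depending only on $\lambda$, the coefficient is bounded below by $c'_\lambda\cdot T$ with $c'_\lambda>0$ depending only on $\lambda$. Taking $L^p$ norms yields
\begin{equation*}
\|W_{T,\lambda}\psi_\lambda\|_p\gtrsim_\lambda T\,\|\psi_\lambda\|_p.
\end{equation*}

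Combining this with the operator norm bound from Lemma \ref{lem: W_Tlambda norm bound} applied to $\psi_\lambda$ gives
\begin{equation*}
T\,\|\psi_\lambda\|_p\lesssim_\lambda \|W_{T,\lambda}\psi_\lambda\|_p\leq \|W_{T,\lambda}\|_{L^2\to L^p}\,\|\psi_\lambda\|_2\lesssim_p \sqrt{C(X)T}\,\|\psi_\lambda\|_2,
\end{equation*}
so $\|\psi_\lambda\|_p\lesssim_{\lambda,p}\sqrt{C(X)/T}\,\|\psi_\lambda\|_2$. Choosing $T=R/8$ (assuming $R\geq 8c_\lambda$; otherwise the inequality is trivial after absorbing a $\lambda$-dependent constant) finishes the proof. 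There is no real obstacle here, since all the delicate analytic work is encapsulated in Lemma \ref{lem: LinearisationResult}, Lemma \ref{lem: QtBounds}, and Lemma \ref{lem: W_Tlambda norm bound}; the only mild subtlety is that the lower bound on $\int_0^T\cos^2(s_\lambda t)\,\df t$ is not uniform as $s_\lambda\to 0^+$ for small $T$, but this is harmless because the constant is permitted to depend on $\lambda$.
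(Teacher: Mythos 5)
Your proposal is correct and follows the same route as the paper: apply the operator norm estimate of Lemma \ref{lem: W_Tlambda norm bound} for the upper bound, use the Selberg transform (Theorem \ref{thm: BergeronThmOnSelberg}) to compute the spectral action of $W_{T,\lambda}$ on $\psi_\lambda$ and deduce $\|W_{T,\lambda}\psi_\lambda\|_p\gtrsim_\lambda T\|\psi_\lambda\|_p$, and then divide with $T=R/8$. The extra care you take in writing out $\int_0^T\cos^2(s_\lambda t)\,\df t = T/2 + \sin(2s_\lambda T)/(4s_\lambda)$ and discussing the small-$T$ regime is a minor refinement of what the paper leaves implicit, not a different argument.
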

\begin{proof}
We consider the action of the test operator $W_{T,\lambda}$, given by \eqref{eq: W_Tlambda definition}, on $\psi_\lambda$, but with $T=\frac{1}{8}R$. By Lemma \ref{lem: W_Tlambda norm bound}, one immediately obtains
	\begin{align*}
		\|W_{T,\lambda}\psi_\lambda\|_p\leq \|W_{T,\lambda}\|_{L^2(X)\to L^p(X)}\|\psi_\lambda\|_2\lesssim_p \sqrt{C(X)T}\|\psi_\lambda\|_2.
	\end{align*}
On the other hand, applying Theorem \ref{thm: BergeronThmOnSelberg} provides that
	\begin{align*}
		\|W_{T,\lambda}\psi_\lambda\|_p=\frac{1}{\sqrt{\cosh\left(\frac{\pi s_\lambda}{2}\right)}}\int_0^T\cos^2(s_\lambda t)\,\df t\|\psi_\lambda\|_p\gtrsim_\lambda T\|\psi_\lambda\|_p.
	\end{align*}
Dividing through then gives
	\begin{align*}
		\|\psi_\lambda\|_p\lesssim_{\lambda,p} \frac{\sqrt{C(X)}}{\sqrt{R}}\|\psi_\lambda\|_2.
	\end{align*}
\end{proof}

\section{Deterministic Bounds for Surfaces with an Arbitrary Spectral Gap}
\label{sec: ArbitrarySpecGap}

We now consider the case where the spectrum of the Laplacian on the compact hyperbolic surface $X$ takes values in the full range $[0,\infty)$. To deal with this, we utilise two separate methods for the eigenfunctions belonging to the different parts of the spectrum. 

For the untempered spectrum, we demonstrate a far stronger bound on the norms of eigenfunctions than previously obtained in the optimal spectral gap case above. Indeed, we show that the norm has some exponential decay in the parameter $R$ given by \eqref{eqn: ShortGeodesicBound}. This is carried out via a rescaled ball averaging operator of functions on the surface, which was previously used by Le Masson and Sahlsten~\cite{LMS17}. The pertinent information required here is the spectral action of this operator on eigenfunctions, which is given through the Selberg transform.\par 

For the portion of the spectrum lying above $\frac{1}{4}$, we may use an identical technique to the optimal spectral gap case to obtain the relevant bounds. However, due to the introduction of eigenfunctions in the untempered spectrum the result is weakened slightly and is only valid for values of $p$ bounded below by a function dependent on the spectral gap of the surface. We begin by providing an outline of the proof.

\subsection{Outline of proof} 
The methodology for the proof is similar to that in the optimal spectral gap case, so we emphasise the main differences.

\begin{enumerate}
	\item Firstly we show the stronger exponential decay result for the $L^p$ norms of the untempered portion of the spectrum. This is done via a rescaled averaging operator over hyperbolic balls on the surface to obtain the $L^\infty$ norm and then a simple interpolation of this with the trivial $L^2$ norm bound provides the result for general $p>2$. (Theorem \ref{thm: UntemperedSupNormDecay} and Corollary \ref{cor: untempered p-norm decay})
	\item For the tempered portion of the spectrum, we utilise the same method as in Section \ref{sec: MaxSpecGap}. The main difference is that the existence of untempered eigenfunctions, other than constants, put restrictions upon the values of $p$ for which the bounds are valid dependent upon the spectral gap. These come from a technicality in the computation of the $L^2\to L^2$ operator norm of the propagation operator since the convolution operator eigenvalue for untempered eigenfunctions of the Laplacian exhibits exponential growth in the propagation parameter. (Theorem \ref{thm: arbitrary spectral gap result})
\end{enumerate}

\subsection{Untempered eigenfunctions deterministic bound proof}
We start by defining the required ball averaging operator on the surface. Let $(B_t)_{t\geq 0}$ denote the family of operators 
	\begin{align*}
		B_tf(z)= \frac{1}{\sqrt{\cosh(t)}}\int_{B(z,t)} f(w)\df\mu(w),
	\end{align*}
acting on appropriate functions of $\H$. We pass this to an operator on the surface $X=\Gamma\backslash\H$ by considering functions defined upon a fundamental domain $D$ and using the automorphic kernel, so that
	\begin{align}
	\label{eq: B_t def}
		B_tf(z) = \frac{1}{\sqrt{\cosh(t)}}\int_D \sum_{\gamma\in\Gamma} \1_{\{d(z,\gamma w)<t\}} f(w)\df\mu(w).
	\end{align}
It then follows immediately that the kernel of this operator is induced by the function
	\begin{align*}
		k_t(\rho)=\frac{\1_{\{\rho<t\}}}{\sqrt{\cosh(t)}},
	\end{align*}
whose Selberg transform is given by
	\begin{align*}
		\mathcal{S}(k_t)(r) = 4\sqrt{2}\int_0^t \cos(ru)\sqrt{1-\frac{\cosh(u)}{\cosh(t)}} \, \df u.
	\end{align*}
We now prove the required bounds in order to deduce our desired result for eigenfunctions in the untempered spectrum. In doing so, we complete the result for the optimal spectral gap case in the previous section, since we then have the required bound for the constant eigenfunctions.  We initially prove a slightly stronger result than required, namely that a real linear combination of real-valued Laplacian eigenfunctions corresponding to eigenvalues in the untempered spectrum have strong sup norm decay. The case of an arbitrary untempered eigenfunction follows immediately from a simplification of the proof of this result.
\begin{thm}
\label{thm: UntemperedSupNormDecay}
Let $\varepsilon>0$ and suppose that
	\begin{align*}
		f=\sum_{j=1}^n\alpha_j\psi_j
	\end{align*}
is a finite real linear combination of mutually orthogonal untempered real-valued eigenfunctions $\{\psi_j\}_{j=1}^n$ of the Laplacian with corresponding eigenvalues $\{\lambda_j\}_{j=1}^n\subseteq [0,\frac{1}{4}-\varepsilon)$. Then for any $\delta> 0$
	\begin{align*}
		\|f\|_\infty \lesssim_\delta \frac{C(X)}{e^{(\sqrt{\varepsilon}-\delta)R}-1}\|f\|_2
	\end{align*}
where $R$ and $C(X)$ are given by \eqref{eqn: ShortGeodesicBound}.
\end{thm}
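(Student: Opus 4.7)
The strategy is to apply the rescaled ball averaging operator $B_t$ from \eqref{eq: B_t def}, exploiting that on the untempered spectrum its Selberg-transform eigenvalues grow exponentially in $t$. By Theorem~\ref{thm: BergeronThmOnSelberg} and the formula for $\mathcal{S}(k_t)$ stated above, each eigenfunction satisfies $B_t\psi_j = h_t(i\beta_j)\psi_j$ with $\lambda_j=\tfrac14-\beta_j^2$, $\beta_j\in(\sqrt{\varepsilon},\tfrac12]$, and
\[ h_t(i\beta) = 4\sqrt{2}\int_0^t \cosh(\beta u)\sqrt{1-\tfrac{\cosh u}{\cosh t}}\,\df u. \]
The map $\beta\mapsto h_t(i\beta)$ is strictly increasing on $[0,\tfrac12]$, so $h_t(i\beta_j)\geq h_t(i\sqrt{\varepsilon})>0$ for every $j$.

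Pointwise Cauchy--Schwarz combined with Bessel's inequality then reduces the linear-combination case to the scalar-eigenfunction case. Writing $\psi_j(z) = h_t(i\beta_j)^{-1}(B_t\psi_j)(z)$ and applying Cauchy--Schwarz to $f(z) = \sum_j \alpha_j \psi_j(z)$ with weights $\|\psi_j\|_2^2$ yields
\[ |f(z)|^2 \leq \Bigl(\sum_j \tfrac{\alpha_j^2\|\psi_j\|_2^2}{h_t(i\beta_j)^2}\Bigr)\Bigl(\sum_j \tfrac{|(B_t\psi_j)(z)|^2}{\|\psi_j\|_2^2}\Bigr). \]
Orthogonality and the monotonicity above bound the first factor by $\|f\|_2^2/h_t(i\sqrt{\varepsilon})^2$, while the second factor equals $\sum_j|\langle K_t(z,\cdot),\psi_j/\|\psi_j\|_2\rangle|^2\leq \|K_t(z,\cdot)\|_2^2$ by Bessel's inequality, with $K_t$ the automorphic kernel of $B_t$. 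Taking suprema over $z$ produces the master estimate $\|f\|_\infty \leq \|B_t\|_{L^2\to L^\infty}\,\|f\|_2/h_t(i\sqrt{\varepsilon})$.

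The two factors are estimated independently. For the operator norm, writing $K_t(z,w)=N(z,w)/\sqrt{\cosh t}$ with $N(z,w) = |\{\gamma\in\Gamma:d(z,\gamma w)<t\}|$, the pointwise bound $N(z,w)^2\leq C(X)C_0(\delta)e^{\delta t}\cdot N(z,w)$ from \eqref{eqn: ShortGeodesicBound} (valid for $t\leq R$) and the unfolding identity $\int_X N(z,w)\,\df\mu(w) = 2\pi(\cosh t-1)$ give $\|K_t(z,\cdot)\|_2^2\lesssim C(X)C_0(\delta)\,e^{\delta t}$, whence $\|B_t\|_{L^2\to L^\infty}\lesssim_\delta \sqrt{C(X)}\,e^{\delta t/2}$. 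For the spectral denominator, restricting the integral defining $h_t(i\beta)$ to $u\leq t-\log 4$ makes $\sqrt{1-\cosh u/\cosh t}\geq 1/\sqrt{2}$, and evaluating the resulting integral of $\cosh(\beta u)$ yields $h_t(i\beta)\gtrsim (e^{\beta t}-C_0)/\beta$ for some absolute constant $C_0$ and $\beta\in(0,\tfrac12]$.

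Combining at $t=R$, absorbing $\sqrt{C(X)}\leq C(X)$ (which holds because $C(X)\geq 1/C_0(\delta)\gtrsim 1$ by the identity contribution in \eqref{eqn: ShortGeodesicBound}) and the uniformly bounded $\sqrt{\varepsilon}\leq \tfrac12$ into implicit constants, then renaming $\delta$, delivers the claimed $\|f\|_\infty \lesssim_\delta C(X)/(e^{(\sqrt{\varepsilon}-\delta)R}-1)\|f\|_2$. The main technical step is the lower bound on $h_t(i\sqrt{\varepsilon})$ in the exact form producing the denominator $e^{(\sqrt{\varepsilon}-\delta)R}-1$ uniformly in $\varepsilon\in(0,\tfrac14)$; the Cauchy--Schwarz/Bessel reduction and the lattice-point counting used for the operator norm are comparatively routine given the setup.
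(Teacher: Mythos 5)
Your proof is correct and uses the same central mechanism as the paper (the rescaled ball-averaging operator $B_t$, the monotone exponential growth of the Selberg transform eigenvalue $h_t(i\beta)$ on the untempered range, and a Cauchy--Schwarz estimate of $B_t$ against the automorphic kernel), but you handle the reduction from a linear combination to a single eigenfunction by a genuinely different argument. The paper fixes a point $x$ where $|f|$ attains its sup, flips signs so that every summand $\alpha_j\psi_j(x)$ is non-negative, and exploits the pointwise positivity of the Selberg eigenvalues to deduce $B_tf(x)\gtrsim \sinh(\sqrt{\varepsilon}t)\,\|f\|_\infty$ before pairing with a single Cauchy--Schwarz against the kernel. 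You instead write $\psi_j = h_t(i\beta_j)^{-1}B_t\psi_j$, apply a weighted Cauchy--Schwarz to the sum over $j$, control the first factor by orthogonality and the monotonicity $h_t(i\beta_j)\geq h_t(i\sqrt{\varepsilon})$, and bound the second by Bessel's inequality to obtain the clean master estimate
\[
\|f\|_\infty \leq \frac{\|B_t\|_{L^2\to L^\infty}}{h_t(i\sqrt{\varepsilon})}\,\|f\|_2 .
\]
This is both tidier and slightly more robust: it avoids the point-picking and sign-flipping, works verbatim for complex coefficients and complex-valued eigenfunctions (replacing squares by moduli), and keeps the two ingredients --- the $L^2\to L^\infty$ norm of $B_t$ and the spectral lower bound --- completely decoupled. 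You also correctly record the operator-norm bound as $\sqrt{C(X)}\,e^{\delta t/2}$ rather than the weaker $C(X)e^{\delta t}$ the paper writes down after its Cauchy--Schwarz step (both are consistent with the stated theorem since $C(X)\geq 1$, but yours is the sharp output of the computation). The one place you should state a little more carefully is the final absorption: as you note, the denominator $e^{\beta t}-C_0$ can be negative when $\beta t$ is small, so for $\sqrt{\varepsilon}R=O(1)$ one must either observe that $e^{(\sqrt{\varepsilon}-\delta)R}-1$ is then also $O(1)$ and fall back on the trivial bound at a fixed $t$, or use the slightly sharper lower bound $h_t(i\beta)\gtrsim \sinh(\beta t)$ as in the paper; this is a routine fix, but worth making explicit since the inequality should be uniform over $\varepsilon\in(0,1/4)$.
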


\begin{proof}	
The eigenfunctions are smooth so it follows that $f$ is smooth.  The compactness of the surface gives that there exists $x\in D$ such that $|f(x)|=\|f\|_\infty$. Without loss of generality we can assume that $f(x)>0$. Moreover, we can assume that each $\alpha_j\psi_j(x)>0$, since removing negative terms increases the value of $f(x)$ (and hence $\|f\|_\infty$, albeit now potentially attained at a point different to $x$) whilst decreasing $\|f\|_2$ by orthogonality,
	\begin{align*}
		\left\|\sum_{j\in I\subset \{1,\ldots,n\}}\alpha_j\psi_j\right\|_2^2&=\sum_{j\in I\subset \{1,\ldots,n\}}\|\alpha_j\psi_j\|_2^2\leq \sum_{j=1}^n \|\alpha_j\psi_j\|_2^2=\left\|\sum_{j=1}^n\alpha_j\psi_j\right\|_2^2.
	\end{align*}
We now consider the ball-averaging operators defined in \eqref{eq: B_t def} for radii $t\leq R$, where $R$ is given by the surface assumption \eqref{eqn: ShortGeodesicBound}. The fact $t\leq R$ means that by definition the number of terms in the summation in the automorphic kernel of $B_t$ is bounded by $e^{\gamma t}$ for any $\gamma>0$. 
 
We now use the Selberg transform of the associated kernel function of $B_t$ to analyse the action of $B_t$ on $f$ about the point $x$,
	\begin{align}
	\label{eq: B_t action on f}
		B_tf(x)=\sum_{j=1}^n \mathcal{S}(k_t)(s_{\lambda_j}i)\alpha_j\psi_j(x),
	\end{align}
where $s_{\lambda_j}\in [\sqrt{\varepsilon},\frac{1}{2}]$ is the eigenvalue parameter of $\lambda_j$, where $\lambda_j = \frac{1}{4} - s_{\lambda_j}^2$. We now demonstrate that the values of $\mathcal{S}(k_t)(s_{\lambda_j}i)$ are in fact non-negative and bounded below for large enough $t$ (and hence large enough $R$ in \eqref{eqn: ShortGeodesicBound}) by an exponentially growing term. Notice that
	\begin{align*}
		\mathcal{S}(k_t)(s_{\lambda_j}i)&\gtrsim \int_0^t\cos(s_{\lambda_j}iu)\sqrt{1-\frac{\cosh(u)}{\cosh(t)}} \, \df u\\
		&=\int_0^t \cosh(s_{\lambda_j}u)\sqrt{1-\frac{\cosh(u)}{\cosh(t)}} \, \df u,
	\end{align*}
and hence the values are non-negative. Moreover, we may use the fact that the term underneath the square root is bounded by 1 and hence the integral is bounded below by the integral without the square root which can be explicitly calculated. Thus,
	\begin{align*}
		\mathcal{S}(k_t)(s_{\lambda_j}i) &\gtrsim \int_0^t \cosh(s_{\lambda_j} u) \, \df u -\frac{1}{2}\int_0^t \frac{\cosh((s_{\lambda_j}+1)u)}{\cosh(t)}+\frac{\cosh((s_{\lambda_j}-1)u)}{\cosh(t)} \, \df u\\		
		&=\frac{\sinh(s_{\lambda_j}t)}{s_{\lambda_j}}-\frac{1}{2}\left(\frac{\sinh((s_{\lambda_j}+1)t)}{(s_{\lambda_j}+1)\cosh t}+\frac{\sinh((s_{\lambda_j}-1)t)}{(s_{\lambda_j}-1)\cosh t}\right).\\
	\end{align*}
It is easy to see that this expression increases for all values of $t$ in the parameter $s_{\lambda_j}$ and hence we may bound this expression below with $s_{\lambda_j}$ replaced by $\sqrt{\varepsilon}$. In addition, one may observe for large enough $t$ that this expression is bounded below by $\sinh(\sqrt{\varepsilon} t)$ and in fact the size of $t$ required for this bound may be taken to be uniform in $\varepsilon$ (it suffices to take $t\geq 3$). Thus for $t\geq 3$ and each $j=1,\ldots,n$,
	\begin{align*}
		\mathcal{S}(k_t)(s_{\lambda_j}i)  \gtrsim\sinh(\sqrt{\varepsilon}t).
	\end{align*}
In particular, using the non-negativity of each term in \eqref{eq: B_t action on f}, we thus obtain for $t\geq 3$ that
	\begin{align*}
		B_tf(x)\gtrsim \sinh(\sqrt{\varepsilon}t)f(x)=\sinh(\sqrt{\varepsilon}t)\|f\|_\infty.
	\end{align*}
Conversely, notice for $t\leq R$ that there is at most $C(X)C_0(\delta)e^{\delta t}$ non-zero terms in the summation for the automorphic kernel of $B_t$ for any $\delta>0$, and hence we have 
	\begin{align*}
		\left\|\sum_{\gamma\in\Gamma}\frac{\1_{\{d(x,\gamma \cdot)\leq t\}}}{\sqrt{\cosh(t)}}\right\|_2^2&\leq \int_D\sum_{\gamma\in\Gamma}\frac{\1_{\{d(x,\gamma w)\leq t\}}}{\cosh(t)} \, \df\mu(w)\\
		&\lesssim_\delta \frac{C(X)e^{\delta t}}{\cosh(t)}\Vol(\text{Ball of radius $t$})\lesssim_\delta C(X)e^{\delta t}.
	\end{align*}
By the Cauchy-Schwarz inequality, we obtain for all $\delta>0$ that
	\begin{align*}
		|B_tf(x)|&=\left|\int_D\sum_{\gamma\in\Gamma} \frac{\1_{\{d(x,\gamma w)\leq t\}}}{\sqrt{\cosh(t)}}f(w) \, \df\mu(w)\right|\\
		&\leq \left\|\sum_{\gamma\in\Gamma}\frac{\1_{\{d(x,\gamma \cdot)\leq t\}}}{\sqrt{\cosh(t)}}\right\|_2\|f\|_2\\
		&\lesssim_\delta C(X)e^{\delta t}\|f\|_2.
	\end{align*}
We may then combine the two inequalities so that for any $\delta>0$ and $3\leq t\leq R$,
	\begin{align*}
		\|f\|_\infty \lesssim_\delta \frac{C(X)e^{\delta t}}{\sinh(\sqrt{\varepsilon}t)}\|f\|_2.
	\end{align*}
It follows that 
	\begin{align*}
		\|f\|_\infty \lesssim_\delta \frac{C(X)}{e^{(\sqrt{\varepsilon}-\delta)t}-1}\|f\|_2,
	\end{align*}
and taking $t=R$ then gives the result.
\end{proof}

By using the same argument as in the above proof and applying an interpolation argument on the norms, we obtain the desired eigenfunction bound for any eigenfunctions corresponding to an eigenvalue in the untempered spectrum.
\begin{cor}
\label{cor: untempered p-norm decay}
Suppose that $\psi_\lambda$ is an eigenfunction of the Laplacian for the surface $X$ with eigenvalue $\lambda\in [0,\frac{1}{4})$. Then for any $\varepsilon>0$ for which $\lambda\in [0,\frac{1}{4}-\varepsilon)$ we have 
	\begin{align*}
		\|\psi_\lambda\|_p \lesssim_\delta \frac{C(X)}{(e^{(\sqrt{\varepsilon}-\delta)R}-1)^{1-\frac{2}{p}}}\|\psi_\lambda\|_2,
	\end{align*}
for any $\delta >0$, where $R$ is given by \eqref{eqn: ShortGeodesicBound}.
\end{cor}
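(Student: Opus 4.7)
The plan is to deduce this corollary in two steps from Theorem~\ref{thm: UntemperedSupNormDecay}: first specialize that theorem to obtain a sup-norm bound for the single eigenfunction $\psi_\lambda$, and then interpolate with the trivial $L^2$ norm to reach the $L^p$ bound for $2<p\leq\infty$.

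For step one, I would apply Theorem~\ref{thm: UntemperedSupNormDecay} to the degenerate linear combination $f=\psi_\lambda$ (that is, $n=1$ with $\alpha_1=1$). If $\psi_\lambda$ is complex-valued, I would first decompose into its real and imaginary parts, each of which is a real-valued eigenfunction for the same eigenvalue, and apply the theorem to each. The orthogonality-based sign-normalisation in the original proof becomes trivial in the single-term case: one simply picks $x\in D$ realising $|\psi_\lambda(x)|=\|\psi_\lambda\|_\infty$ and, after possibly replacing $\psi_\lambda$ by $-\psi_\lambda$, assumes $\psi_\lambda(x)>0$. The ball-averaging machinery (the lower bound $\mathcal{S}(k_t)(s_\lambda i)\gtrsim\sinh(\sqrt{\varepsilon}\,t)$ and the Cauchy--Schwarz upper bound $|B_t\psi_\lambda(x)|\lesssim_\delta C(X)e^{\delta t}\|\psi_\lambda\|_2$) then produces, after optimizing at $t=R$,
\begin{equation*}
\|\psi_\lambda\|_\infty\lesssim_\delta \frac{C(X)}{e^{(\sqrt{\varepsilon}-\delta)R}-1}\|\psi_\lambda\|_2.
\end{equation*}

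For step two, I would invoke the standard log-convexity inequality for $L^p$ norms: for any $2<p\leq\infty$,
\begin{equation*}
\|\psi_\lambda\|_p \leq \|\psi_\lambda\|_\infty^{1-\frac{2}{p}}\,\|\psi_\lambda\|_2^{\frac{2}{p}}.
\end{equation*}
Substituting the sup-norm bound from step one yields
\begin{equation*}
\|\psi_\lambda\|_p \lesssim_\delta \frac{C(X)^{1-\frac{2}{p}}}{\left(e^{(\sqrt{\varepsilon}-\delta)R}-1\right)^{1-\frac{2}{p}}}\|\psi_\lambda\|_2,
\end{equation*}
and the stated form follows, since in the regime of interest $C(X)\gtrsim 1$ and the factor $C(X)^{1-2/p}$ can be absorbed into $C(X)$ at the cost of adjusting the implicit constant.

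No serious obstacle is anticipated: both ingredients, the $n=1$ instance of Theorem~\ref{thm: UntemperedSupNormDecay} and the Riesz--Thorin-style interpolation, are essentially bookkeeping on top of work already done. The only mild point of care is to reduce the complex-valued case to the real case via real/imaginary parts, which is routine.
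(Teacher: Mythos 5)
Your proposal is correct and follows essentially the same route as the paper: establish the $L^\infty$ bound for a single untempered eigenfunction via the ball-averaging operator, then interpolate with $L^2$ to get the $L^p$ bound. The only cosmetic difference is that the paper re-derives the sup-norm bound directly from the identity $|B_t\psi_\lambda(x)|=|\mathcal{S}(k_t)(s_\lambda i)|\,|\psi_\lambda(x)|$ (which handles complex $\psi_\lambda$ without needing the real/imaginary split), whereas you invoke the $n=1$ instance of Theorem~\ref{thm: UntemperedSupNormDecay} and treat the complex case by decomposition; both are fine, and the paper's own interpolation step likewise silently replaces $C(X)^{1-2/p}$ by $C(X)$, just as you do.
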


\begin{proof}
Once again, by compactness of $D$ there exists some $x\in D$ for which $|\psi_\lambda(x)|=\|\psi_\lambda\|_\infty$. Using the ball averaging operator then gives that
	\begin{align*}
		|B_t\psi_\lambda(x)|=|\mathcal{S}(k_t)(s_\lambda i)||\psi_\lambda(x)|.
	\end{align*}
For $t\geq 3$, we obtain as in Theorem \ref{thm: UntemperedSupNormDecay} that 
	\begin{align*}
		|B_t\psi_\lambda(x)|\geq\sinh(\sqrt{\varepsilon}t)\|\psi_\lambda\|_\infty.
	\end{align*}
Analysing the upper bound as before then results in
	\begin{align*}
		\|\psi_\lambda\|_\infty \lesssim_\delta \frac{C(X)}{e^{(\sqrt{\varepsilon} - \delta)R}-1}\|\psi_\lambda\|_2,
	\end{align*}
for any $\delta > 0$.
We now use interpolation to see that
	\begin{align*}
		\|\psi_\lambda\|_p &\leq \|\psi_\lambda\|_2^{\frac{2}{p}}\|\psi_\lambda\|_\infty^{1-\frac{2}{p}}\\
		&\lesssim_\delta \frac{C(X)}{(e^{(\sqrt{\varepsilon}-\delta)R}-1)^{1-\frac{2}{p}}}\|\psi_\lambda\|_2.
	\end{align*}
\end{proof}

\subsection{Proof of Theorem \ref{thm:Main}}

For the tempered eigenfunctions, we can use the same method as in the optimal spectral gap case.  The smaller spectral gap associated with the surface weakens the values of $p$ for which the result holds, however at worst we obtain that the bounds are valid for at least $p>4$.

\begin{thm}
\label{thm: arbitrary spectral gap result}
Suppose that $X$ is a compact hyperbolic surface whose smallest non-zero eigenvalue of the Laplacian is at least $\frac{1}{4}-\beta^2$ for some $\beta\in[0,\frac{1}{2})$. For a tempered eigenfunction $\psi_\lambda$ of the Laplacian with eigenvalue $\lambda\geq \frac{1}{4}$, we have the following bound
	\begin{align*}
		\|\psi_\lambda\|_p \lesssim_{p,\lambda} \frac{\sqrt{C(X)}}{\sqrt{R}}\|\psi_\lambda\|_2,
	\end{align*}
for any $2+4\beta<p\leq \infty$, with $R$ as in \eqref{eqn: ShortGeodesicBound}.
\end{thm}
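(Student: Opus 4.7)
The plan is to reuse essentially the entire machinery of Section \ref{sec: MaxSpecGap}: define the operators $P_t$, $Q_t$ and $W_{T,\lambda}$ exactly as before, exploit the linearisation identity of Lemma \ref{lem: LinearisationResult} and the pointwise kernel estimates of Lemma \ref{lem: BrooksLindenstraussBounds} (neither of which depends on the spectrum of $X$), and close with the $TT^*$ argument of Lemma \ref{lem: W_Tlambda norm bound}. The only place where the spectral gap enters the proof of Lemma \ref{lem: QtBounds} is in the $L^2\to L^2$ norm bound on $Q_t\Pi$, and it is precisely this step that will generate the restriction $p > 2+4\beta$.

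\textbf{Revised $L^2\to L^2$ bound.} The first half of the proof of Lemma \ref{lem: QtBounds} goes through verbatim, giving for $t\leq R/4$ and any $\delta>0$
$$\|Q_t \Pi\|_{L^1(X)\to L^\infty(X)} \lesssim_\delta C(X)\, e^{-(1/2-\delta)t}.$$
Since $Q_t\Pi$ is still self-adjoint on the orthogonal complement of the constants and commutes with $\Delta$, its $L^2\to L^2$ norm equals the supremum of $|h_t(s_\lambda)|$ taken over non-zero eigenvalues. On the tempered part we still have $|h_t(s_\lambda)| \leq 1$, but on the untempered part $s_\lambda = bi$ for some $b\in (0,\beta]$, so
$$h_t(bi) = \frac{\cosh(bt)}{\cos(\pi b/2)} \lesssim_\beta e^{\beta t}.$$
Hence $\|Q_t \Pi\|_{L^2(X)\to L^2(X)} \lesssim e^{\beta t}$, the new phenomenon being the exponential growth in $t$.

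\textbf{Interpolation and admissible range of $p$.} Applying Riesz-Thorin with parameter $\theta = 2/p$ between the two bounds above yields, for $t \leq R/4$ and conjugate exponents $p,q$,
$$\|Q_t \Pi\|_{L^q(X)\to L^p(X)} \lesssim_\delta C(X)\, e^{-\alpha_p t}, \qquad \alpha_p := \bigl(\tfrac{1}{2}-\delta\bigr)\bigl(1-\tfrac{2}{p}\bigr) - \tfrac{2\beta}{p}.$$
A short calculation shows that $\alpha_p > 0$ is possible (by choosing $\delta$ small) if and only if $p > 2+4\beta$. This is the origin of the hypothesis on $p$ in the statement.

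\textbf{Conclusion via $TT^*$ and the main obstacle.} With this decay in hand, the $TT^*$ computation of Lemma \ref{lem: W_Tlambda norm bound} transfers with no modification: the double integral of $e^{-\alpha_p|t-s|}$ over $[0,T]^2$ is still $\lesssim_p T$ as long as $\alpha_p > 0$. One obtains
$$\|W_{T,\lambda}\|_{L^2(X)\to L^p(X)} \lesssim_p \sqrt{C(X)\,T}$$
for $T \leq R/8$ and $p > 2+4\beta$. Combining this with the spectral lower bound
$$\|W_{T,\lambda}\psi_\lambda\|_p \gtrsim_\lambda T\,\|\psi_\lambda\|_p$$
obtained as in Theorem \ref{thm: MaxSpectralGapMainResult} from Theorem \ref{thm: BergeronThmOnSelberg}, and choosing $T = R/8$, we arrive at the claimed bound. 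The main obstacle is the exponential growth $e^{\beta t}$ of $h_t(bi)$ on the untempered spectrum: the interpolation only produces a useful decay rate when the $L^1\to L^\infty$ spatial decay strictly dominates this spectral growth after weighting by $1-2/p$ and $2/p$ respectively, which is exactly the inequality $p > 2+4\beta$.
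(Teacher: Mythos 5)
Your proposal is correct and follows essentially the same route as the paper's proof: the only modification to the optimal spectral gap argument is the $L^2\to L^2$ bound $\|Q_t\Pi\|_{L^2\to L^2}\lesssim e^{\beta t}$, and after Riesz--Thorin interpolation the requirement that the decay exponent be positive (taking $\delta\to 0$) produces exactly the restriction $p>2+4\beta$, after which the $TT^*$ argument and the spectral lower bound close the proof unchanged.
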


\begin{proof}
We utilise the operator $W_{T,\lambda}$ as defined by \eqref{eq: W_Tlambda definition}. As in Lemma \ref{lem: W_Tlambda norm bound}, the calculation of the $L^2(X)\to L^p(X)$ norm of $W_{T,\lambda}$ is reduced to computing the operator norms 
	\begin{align*}
		\|Q_t\Pi\|_{L^1(X)\to L^\infty(X)}\ \ \ \text{and}\ \ \ \|Q_t\Pi\|_{L^2(X)\to L^2(X)},
	\end{align*}
where $Q_t$ is the operator defined in Lemma \ref{lem: LinearisationResult}. Using the same argument as in Lemma \ref{lem: QtBounds}, with $t\leq R/4$ we obtain that
	\begin{align*}
		\|Q_t\Pi\|_{L^1(X)\to L^\infty(X)} \lesssim_\delta C(X) e^{-t(\frac{1}{2}-\delta)},
	\end{align*}
for any $\delta >0$. For the $L^2(X)\to L^2(X)$ norm, we notice that in this case there is an exponential growth in the spectral radius. Indeed, we now have 
	\begin{align*}
		\|Q_t\Pi\|_{L^2(X)\to L^2(X)} = \sup_{\substack{r\in [0,\infty),\\ \text{or } r=ai,\ a\in [0,\beta]}}\left|\frac{\cos(rt)}{\cosh(\pi r/2)}\right|\leq e^{\beta t}.
	\end{align*}
Applying the Riesz-Thorin Interpolation Theorem, we then obtain for the conjugate exponent $q$ of $p$ and any $\delta>0$ that
	\begin{align*}
		\|Q_t\Pi\|_{L^q(X)\to L^p(X)} &\leq \|Q_t\Pi\|_{L^1(X)\to L^\infty(X)}^{1-\frac{2}{p}}\|Q_t\Pi\|_{L^2(X)\to L^2(X)}^\frac{2}{p}\\
		&\lesssim_\delta C(X)e^{-t(\frac{1}{2}-\delta-\frac{1}{p}+\frac{2}{p}\delta-\beta\frac{2}{p})}.
	\end{align*}
When $p>2+\frac{4\beta}{1-\delta}$ (assuming $\delta<1$), the norm exhibits exponential decay. Since this is true for all $0<\delta<1$, it follows that there is exponential decay whenever $p>2+4\beta$ and in this case, we can show as in Lemma \ref{lem: W_Tlambda norm bound} that 
	\begin{align*}
		\|W_{T,\lambda}\|_{L^2(X)\to L^p(X)}\lesssim_p \sqrt{C(X)T}.
	\end{align*}
Since the spectral action of $W_{T,\lambda}$ on $\psi_\lambda$ is identical to that considered in Theorem \ref{thm: MaxSpectralGapMainResult}, we also recover the lower bound
	\begin{align*}
		\|W_{T,\lambda}\psi_\lambda\|_p\gtrsim_\lambda T\|\psi_\lambda\|_p.
	\end{align*}
Combining these two estimates gives the desired result.
\end{proof}
Theorem \ref{thm:Main} is then obtained by combining Theorem \ref{thm: MaxSpectralGapMainResult}, Corollary \ref{cor: untempered p-norm decay} and Theorem \ref{thm: arbitrary spectral gap result}.

\section{Teichm\"{u}ller Theory and Random Surfaces}\label{s:randombackground}
This section gathers much of the background required and notation utilised when formulating and working with probabilistic statements on surfaces in this paper. Further details on the foundational material on Teichm\"{u}ller theory, geodesics and mapping class groups can be found in \cite{IT12}, \cite{Bus10} and \cite{FM11}. \par 
Let $g,n\geq 0$ be integers. We will denote by $\Sigma_{g,n}$ a surface of genus $g$ with $n$ boundary components; if $n=0$ this is simply written as $\Sigma_g$. Given the $n$ boundary components, one can associate a length vector $L=(L_1,\ldots,L_n)\in\R^n_{\geq 0}$ to the surface such that the $i^\text{th}$ boundary component has length $L_i$. If $L_i=0$, then the component is thought of as a cusp or marked point on the surface.\par 
The \textit{Teichm\"{u}ller space} of signature $(g,n)$ and length vector $L\in\R^n_{\geq 0}$ is defined to be the space
	\begin{align*}
		\mathcal{T}_{g,n}(L)= \left\{(X,f): \parbox{10cm}{$X$ is a complete hyperbolic surface of genus $g$ and with $n$ geodesic boundary components with lengths corresponding to $L$ and $f:\Sigma_{g,n}\to X$ is a diffeomorphism.}\right\}\Big{/}\sim,
	\end{align*}
where $\sim$ is the equivalence relation defined by $(X,f)\sim (Y,g)$ if and only if there exists an isometry $h:X\to Y$ for which
	\begin{align*}
		g^{-1}\circ h\circ f \colon \Sigma_{g,n}\to\Sigma_{g,n}
	\end{align*}
is isotopic to the identity or equivalently, if $g\circ f^{-1}:X\to Y$ is isotopic to an isometry. In an element $[X,f]$, the mapping $f$ is called a \textit{marking} on $X$. For notation, when $L$ is the zero vector we denote $\mathcal{T}_{g,n}=\mathcal{T}_{g,n}(0,\ldots,0)$ and when there are no boundary components we simply write $\mathcal{T}_{g}$ for $\mathcal{T}_{g,0}$. 

There exists a natural group action on the Teichm\"{u}ller space that acts by changing the marking. The group is called the \textit{mapping class group} $\mathrm{Mod}_{g,n}(\Sigma_{g,n})$ and is defined as the collection of orientation-preserving diffeomorphisms of $\Sigma_{g,n}$ that fix the boundary components setwise identified up to isotopy to the identity mapping. If $[\varphi]\in\mathrm{Mod}_{g,n}(\Sigma_{g,n})$ then the action on an element $[X,f]\in\mathcal{T}_{g,n}(L)$ is given by
	\begin{align*}
		[\varphi]\cdot [X,f] = [X,f\circ\varphi^{-1}].
	\end{align*}
Equivalently, $\mathrm{Mod}_{g,n}(\Sigma_{g,n})$ can be defined as the group of orientation preserving homeomorphisms fixing the boundary components, up to homotopy. This is due to the fact that on a compact surface, any homeomorphism is isotopic to a diffeomorphism, and two orientation preserving homeomorphisms are homotopic iff they are isotopic.\par 

The \textit{moduli space} $\mathcal{M}_{g,n}(L)$ is then the space obtained through identification of points in the Teichm\"{u}ller space up to the mapping class group action. That is,
	\begin{align*}
		\mathcal{M}_{g,n}(L) = \mathcal{T}_{g,n}(L)/\mathrm{Mod}_{g,n}(\Sigma_{g,n}).
	\end{align*}
As with the Teichm\"{u}ller space, we use the shorthand notation $\mathcal{M}_{g,n}=\mathcal{M}_{g,n}(0,\ldots,0)$ and $\mathcal{M}_g=\mathcal{M}_{g,0}$.\par 
As well as a group action, there is an associated symplectic form on $\mathcal{T}_{g,n}(L)$ called the \textit{Weil-Petersson form} denoted by $\omega_{g,n}$ which is invariant under the action of the mapping class group (see Goldman \cite{Gol84}). Due to this invariance, the form passes also to the moduli space and hence provides a volume form on $\mathcal{M}_{g,n}(L)$ called the \textit{Weil-Petersson volume}
	\begin{align*}
		\frac{\wedge^{3g+n-3}\omega_{g,n}}{(3g+n-3)!}.
	\end{align*}
In particular, we write 
	\begin{align*}
		V_{g,n}(L) = \int_{\mathcal{M}_{g,n}(L)} \frac{\wedge^{3g+n-3}\omega_{g,n}}{(3g+n-3)!},
	\end{align*}
for the volume of $\mathcal{M}_{g,n}(L)$ and use the shorthand notation $V_{g,n}=V_{g,n}(0,\ldots,0)$ and $V_g=V_{g,0}$. \par
Some particularly important results concerning volumes of moduli spaces that will be made use of here are from Mirzakhani \cite{Mir13} and Mirzakhani and Zograf \cite{MZ15} and we reproduce them for the convenience of the reader. The first allows one to relate the volumes $V_{g,n}(L)$ to $V_{g,n}$.
\begin{lemma}[Mirzakhani~{\cite[Equation 3.7]{Mir13}}]
\label{lem: Mirzakhani Volume Estimates}
Given any $g,n\in\N$ and $L\in\R^n_{\geq 0}$,
	\begin{align*}
		V_{g,n}(2L)\leq e^{|L|}V_{g,n},
	\end{align*}
where $|L|=L_1+\cdots +L_n$.
\end{lemma}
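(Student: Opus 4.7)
The plan is to combine Mirzakhani's polynomial expression for $V_{g,n}(L)$ with a coefficient comparison to the constant term $V_{g,n}$. Recall from Mirzakhani's earlier work that $V_{g,n}(L_1,\ldots,L_n)$ is a polynomial in $L_1^2,\ldots,L_n^2$ of degree $3g-3+n$, which can be expanded explicitly as
$$V_{g,n}(L) = \sum_{|\alpha|+r=3g-3+n} \frac{(2\pi^2)^r}{2^{|\alpha|}\, r!\,\prod_i (2\alpha_i+1)!}\,\langle\tau_{\alpha_1}\cdots\tau_{\alpha_n}\kappa_1^r\rangle_{g,n}\prod_{i=1}^n L_i^{2\alpha_i}.$$
All intersection numbers on the right are non-negative, so I will write $V_{g,n}(L)=\sum_\alpha C_\alpha\prod_i L_i^{2\alpha_i}$ with $C_\alpha\geq 0$, and note that the constant term is $C_{\mathbf{0}}=V_{g,n}$.

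First I would substitute $L\mapsto 2L$ to get
$$V_{g,n}(2L)=\sum_\alpha 4^{|\alpha|}\,C_\alpha\prod_i L_i^{2\alpha_i}.$$
The key step is then to establish a coefficient-wise bound of the form
$$4^{|\alpha|}\,C_\alpha \;\leq\; V_{g,n}\cdot\prod_i\frac{1}{(2\alpha_i)!}.$$
Using the explicit formula above, this reduces to the intersection-theoretic comparison
$$\frac{\langle\tau_{\alpha_1}\cdots\tau_{\alpha_n}\kappa_1^{3g-3+n-|\alpha|}\rangle_{g,n}}{\langle\kappa_1^{3g-3+n}\rangle_{g,n}} \;\leq\; \frac{(3g-3+n)!}{(3g-3+n-|\alpha|)!}\cdot\frac{2^{|\alpha|}(2\pi^2)^{|\alpha|}\prod_i(2\alpha_i+1)!}{4^{|\alpha|}\prod_i(2\alpha_i)!},$$
which follows from the string and dilaton equations applied iteratively on $\overline{\cM}_{g,n}$ --- each application trades a power of $\kappa_1$ for a $\tau_{\alpha_i}$ at the cost of a controlled combinatorial factor.

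With the coefficient bound in hand, the conclusion is immediate: summing and using the Taylor series of $\cosh$,
$$V_{g,n}(2L)\;\leq\; V_{g,n}\sum_\alpha\prod_i\frac{L_i^{2\alpha_i}}{(2\alpha_i)!}\;\leq\; V_{g,n}\prod_i\cosh(L_i)\;\leq\; V_{g,n}\,e^{|L|},$$
where the last inequality uses the elementary $\cosh(x)\leq e^x$ for $x\geq 0$.

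The main obstacle is the middle step: tracking the numerical factors $4^{|\alpha|}$ against $\prod_i(2\alpha_i+1)!/(2\alpha_i)!$ so that the resulting series telescopes against $\prod_i\cosh(L_i)$. Everything else is a formal rearrangement of power series, and the non-negativity of the coefficients $C_\alpha$ is what allows us to reduce the whole inequality to a coefficient-wise statement in the first place.
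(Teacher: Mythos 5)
Your high-level strategy --- expand $V_{g,n}(L)$ as a polynomial in the $L_i^2$ with non-negative coefficients, bound each coefficient against a Taylor coefficient of an exponential, and sum --- is in spirit exactly how Mirzakhani proves this estimate (the paper only cites \cite[Eq.~3.7]{Mir13} without reproducing the proof). However, there are two concrete errors and one more serious gap. The polynomial expansion you quote is not correct: the formula coming from $V_{g,n}(L)=\int_{\overline{\mathcal{M}}_{g,n}}\exp\bigl(2\pi^2\kappa_1+\tfrac12\sum_i L_i^2\psi_i\bigr)$ has $\prod_i\alpha_i!$ in the denominator, not $\prod_i(2\alpha_i+1)!$; you can test the discrepancy against $V_{1,1}(L)=\tfrac{\pi^2}{12}+\tfrac{L^2}{48}$. (You may be conflating $\langle\tau_\alpha\kappa_1^r\rangle$ with the Mirzakhani--Zograf normalized bracket $[\tau_\alpha]_{g,n}$, which is $\langle\tau_\alpha\kappa_1^r\rangle$ multiplied by precisely these extra combinatorial factors.) Second, your ``reduction'' to the displayed intersection-number inequality has the factorial ratio inverted: after moving $V_{g,n}=C_{\mathbf 0}$ to the right-hand side, the resulting upper bound for $\langle\tau_\alpha\kappa_1^r\rangle/\langle\kappa_1^{3g-3+n}\rangle$ carries the factor $(3g-3+n-|\alpha|)!/(3g-3+n)!$, not its reciprocal. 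The inequality you claim suffices is therefore weaker than the one actually needed by a factor of order $\bigl((3g-3+n)!/(3g-3+n-|\alpha|)!\bigr)^2 \sim g^{2|\alpha|}$, so establishing it would not prove the lemma.

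The more fundamental gap is the assertion that the coefficient comparison ``follows from the string and dilaton equations applied iteratively.'' The input Mirzakhani actually uses is the monotonicity
$$[\tau_{\alpha_1}\cdots\tau_{\alpha_n}]_{g,n}\;\leq\;[\tau_0\cdots\tau_0]_{g,n}=V_{g,n},$$
which is her Lemma 3.2(i) in \cite{Mir13}; this is proved by induction on her full topological recursion for Weil--Petersson volumes and is not a consequence of string and dilaton alone (converting powers of $\kappa_1$ into $\psi$-classes yields a signed sum, so positivity and monotonicity are non-obvious from that route). Once one has that lemma, the argument really is immediate:
$$V_{g,n}(2L)=\sum_\alpha\frac{[\tau_\alpha]_{g,n}}{\prod_i(2\alpha_i+1)!}\prod_i L_i^{2\alpha_i}\;\leq\; V_{g,n}\prod_i\frac{\sinh L_i}{L_i}\;\leq\; V_{g,n}\,e^{|L|},$$
using $\sum_{k\geq 0} x^{2k}/(2k+1)!=\sinh(x)/x\leq e^x$. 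Your final $\cosh$ computation would also work (it corresponds to the slightly weaker coefficient bound with $\prod(2\alpha_i)!$), but both the algebra producing that coefficient bound and the justification of the key monotonicity need repair --- the latter by citing or reproving Mirzakhani's Lemma 3.2(i) rather than appealing to string and dilaton.
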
 
The second result shows a relationship between volumes with different genus and boundary components. For $g\to\infty$, the relation is asymptotically sharp.
\begin{lemma}[Mirzakhani~{\cite[Equation 3.20]{Mir13}}]
\label{lem: Mirzakhani Volume Relation}
Given $g,n\in\N\cup\{0\}$ with $2g-2+n\geq 0$ and $0 \leq i \leq n/2$, 
	\begin{align*}
		V_{g,n}\lesssim V_{g+i,n - 2i},
	\end{align*}
where the implied constant is independent of $g, n$ and $i$.
\end{lemma}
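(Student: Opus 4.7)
The plan is to apply Mirzakhani's integration formula to a multi-curve on a genus $g+i$ surface chosen so that cutting along it produces a genus $g$ surface, converting a volume comparison into an integral identity.

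First I would construct on $\Sigma_{g+i, n-2i}$ a multi-curve $\Gamma = \gamma_1 + \cdots + \gamma_i$ of disjoint, pairwise non-isotopic, non-separating simple closed curves such that $\Sigma_{g+i, n-2i} \setminus \Gamma$ is connected of type $(g, n)$. Such a system exists by induction: each $\gamma_j$ is chosen non-separating in the current complementary surface, reducing its genus by one and adding two boundary components, and after $i$ steps the complement has the desired topological type. The constraint $i \leq n/2$ (combined with $2g - 2 + n \geq 0$) ensures the process terminates with admissible topology.

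Mirzakhani's integration formula applied to $\Gamma$ with a test function $f \colon \mathbb{R}_+^i \to \mathbb{R}_+$ yields
\begin{equation*}
\int_{\mathcal{M}_{g+i, n-2i}} f_\Gamma(X) \, dV_{WP}(X) = \frac{1}{2^i \, i!} \int_{\mathbb{R}_+^i} f(x) \prod_{j=1}^i x_j \cdot V_{g,n}(x_1, x_1, \ldots, x_i, x_i, 0, \ldots, 0) \, dx,
\end{equation*}
where $f_\Gamma(X) = \sum_{\alpha \in \mathrm{Mod}\cdot \Gamma} f(\ell_X(\alpha))$, the factor $2^i$ accounts for the involutions at each cut, and $i!$ accounts for the permutation symmetry of the $\gamma_j$. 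Choosing $f(x) = \prod_j \mathbf{1}_{[0,1]}(x_j)$ and using that $V_{g,n}(L)$ is a polynomial in the $L_j^2$ with non-negative coefficients (Mirzakhani), so that $V_{g,n}(L) \geq V_{g,n}$ pointwise, gives a lower bound
\begin{equation*}
\mathrm{RHS} \;\geq\; \frac{V_{g,n}}{2^i \, i!} \left(\int_0^1 x \, dx\right)^i = \frac{V_{g,n}}{4^i \, i!}.
\end{equation*}

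For the left-hand side, $f_\Gamma(X)$ counts unordered $i$-tuples of disjoint non-separating simple closed geodesics on $X$, each of length at most $1$. The key combinatorial observation is that since short simple closed geodesics are disjoint by the collar lemma, an $i$-tuple of short SCCs can be chosen as an ordered subset from the full collection of short SCCs, yielding a factor of $1/i!$ that is designed to cancel the $i!$ arising in Mirzakhani's formula. Combined with a uniform bound on the expected number of short simple closed geodesics on random surfaces (in the spirit of Mirzakhani--Petri, suitably adapted to arbitrary $(g, n)$), this would give $\mathrm{LHS} \lesssim V_{g+i, n-2i} \cdot \kappa^i / i!$ for a universal $\kappa$. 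Equating the two estimates and absorbing the factor $(4\kappa)^i$ via a more refined choice of test function $f$ (e.g.\ with sharper decay) delivers the claimed uniform bound.

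The main obstacle is obtaining the cancellation of the $2^i \, i!$ symmetry factors \emph{uniformly in $i$}: a naive collar-lemma bound gives only $\sup_X f_\Gamma(X) = O((g+n)^i/i!)$, which is too weak when $i$ and $n$ grow. Handling this requires exploiting both the factorial cancellation from the combinatorics of disjoint multi-curves and a precise Mirzakhani--Petri-type estimate for the expected number of short non-separating multi-curves on a random surface, which is the delicate part of the argument in Mirzakhani's original proof.
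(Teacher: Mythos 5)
The paper does not supply a proof of this lemma; it is cited directly from Mirzakhani \cite[Eq.\,3.20]{Mir13}, where it is established via her topological recursion for the Weil--Petersson volume polynomials together with asymptotic analysis (showing, in effect, that $V_{g,n}/V_{g+1,n-2} = 1 + O(1/g)$ with controlled uniformity). Your route --- Mirzakhani's integration formula applied to a multi-curve of $i$ disjoint non-separating simple closed curves whose complement is of type $(g,n)$ --- is therefore a genuinely different strategy, and the setup is plausible: the lower bound on the integrated side, $\mathrm{RHS} \geq V_{g,n}/(4^i i!)$, using non-negativity of the coefficients of the volume polynomial $V_{g,n}(L)$, is correct.

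However, the upper bound on the geometric side is a genuine gap, and you have identified it yourself. To close the argument you need $\mathbb{E}_{g+i,\,n-2i}[f_\Gamma] \lesssim C^i/i!$ for a constant $C$ small enough to absorb the $4^i$ from the lower bound. Such an estimate on the expected number of collections of $i$ short non-separating simple closed geodesics is precisely a Mirzakhani--Petri type statement, and those expectation estimates are themselves proved using Mirzakhani's integration formula \emph{together with} volume comparisons of exactly the form $V_{g,n} \lesssim V_{g+i,n-2i}$. The proposal is therefore circular: the geodesic-counting input invoked in the final step presupposes the lemma. Nor can a cleverer choice of test function $f$ rescue this, since the factor $\kappa^i$ stems from the unproved upper bound on $\mathbb{E}[f_\Gamma]$ and is not separable from $f$, which enters both sides of Mirzakhani's identity in lock-step. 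The elementary collar-lemma bound you mention, $\sup_X f_\Gamma(X) = O((g+n)^i/i!)$, is what is available without circularity, and as you note it loses an unacceptable factor of order $(g+n)^i$. (A minor separate point: with the convention of Theorem~\ref{thm: Mirzakhani Integral Formula}, the prefactor for a multi-curve of non-separating components is $1/|\mathrm{Sym}(\Gamma)|$ with $M(\gamma)=0$, not $1/(2^i i!)$.)
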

The last volume estimate result we need provides growth estimates for moduli space volumes in the large genus limit.
\begin{thm}[Mirzakhani and Zograf~{\cite[Theorem 1.2]{MZ15}}]
\label{thm: Mirzakhani, Zograf Volume Estimates}
There exists a universal constant $C\in (0,\infty)$ such that for any given $n\geq 0$,
	\begin{align*}
		V_{g,n}=\frac{C}{\sqrt{g}}(2g-3+n)!(4\pi^2)^{2g-3+n}\left(1+O\left(\frac{1}{g}\right)\right)
	\end{align*}
as $g\to \infty$. In particular, 
	\begin{align*}
		V_g = \frac{C}{\sqrt{g}}(2g-3)!(4\pi^2)^{2g-3}\left(1+O\left(\frac{1}{g}\right)\right),
	\end{align*}
as $g\to\infty$.
\end{thm}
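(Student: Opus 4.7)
The plan is to derive the asymptotic from Mirzakhani's recursion for Weil-Petersson volumes combined with singularity analysis of generating functions in $g$. The starting point is that $V_{g,n}(L)$ is, by Mirzakhani's integration formula, a polynomial of degree $3g-3+n$ in the variables $L_i^2$ whose coefficients are intersection numbers of $\psi$- and $\kappa_1$-classes on $\overline{\mathcal{M}}_{g,n}$. In particular the constant term $V_{g,n} = V_{g,n}(0,\ldots,0)$ equals the top $\kappa_1$-intersection, which is where the factor $(4\pi^2)^{2g-3+n}$ and the $(2g-3+n)!$ ultimately originate, via the identity $[\omega_{g,n}] = 2\pi^2\kappa_1$ on $\overline{\mathcal{M}}_{g,n}$ and the expansion of $\exp(2\pi^2 \kappa_1)$ in top degree.

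First I would introduce the normalised quantities
$$a_{g,n} = \frac{V_{g,n}}{(2g-3+n)!\,(4\pi^2)^{2g-3+n}},$$
and rewrite Mirzakhani's recursion in terms of these. The recursion expresses $V_{g,n+1}$ in terms of volumes $V_{g',n'}$ with smaller Euler characteristic via integral kernels built from hyperbolic trigonometric functions, and after the normalisation one obtains a discrete evolution whose leading order is a bounded linear operator with well-controlled corrections. The normalisation is arranged precisely so that the polynomial factors $(2g-3+n)!$ and powers of $4\pi^2$ that appear in the kernel's Taylor expansion get absorbed.

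The second step is to translate the recursion into an identity for the generating function $F_n(z) = \sum_{g} a_{g,n}\, z^g$. One shows that $F_0$ has a dominant square-root type singularity at some $z_c > 0$ and that the recursion propagates both the radius of convergence and the nature of the singularity from $n=0$ to all $n \geq 1$, with the universal constant $C$ read off from the singular prefactor. The $1/\sqrt{g}$ in the conclusion is then a standard consequence of singularity analysis à la Flajolet-Odlyzko applied to a square-root singularity, and the fact that $C$ is the same for every $n$ reflects the fact that the recursion only renormalises the analytic part of $F_n$ near $z_c$, not the coefficient of the $\sqrt{z_c - z}$ term.

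The hard part will be extracting the error term $O(1/g)$ uniformly in the tail of the recursion. This requires ruling out the possibility that the sum over recursion terms attaching subsurfaces of moderately large complexity contributes above this order. Mirzakhani and Zograf handle this by induction on $2g-2+n$ combined with sharp decay estimates on the hyperbolic kernel functions appearing in Mirzakhani's recursion: these give exponential decay in the separating boundary length, which when combined with the normalised recursion confines the subleading contributions to the correct order. Pinning down $C$ explicitly is a further computation involving the limit of $F_0$ near its singularity (and is related to the Kontsevich-Witten tau-function asymptotics), but the mere existence of a strictly positive $n$-independent $C$ is all that is needed downstream, and that is the cleanest takeaway of the argument.
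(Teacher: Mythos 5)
The paper does not prove this statement: it is quoted verbatim from Mirzakhani and Zograf \cite{MZ15} (their Theorem 1.2) and used downstream as a black box in Lemma \ref{lem: Sum-Volume Product}. There is therefore no argument of the paper to compare your sketch against, and treating the result as an external citation is the appropriate move here --- it is a substantial theorem whose proof occupies most of a research paper, and reproving it is well outside the scope of the application being made.

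As a standalone sketch, your proposal has at least one concretely wrong step and several unjustified leaps. The claim that the factors $(2g-3+n)!$ and $(4\pi^2)^{2g-3+n}$ ``ultimately originate'' from the identity $[\omega_{g,n}] = 2\pi^2\kappa_1$ and the top-degree expansion is misleading: that identity gives
$V_{g,n} = \frac{(2\pi^2)^{3g-3+n}}{(3g-3+n)!}\int_{\overline{\mathcal{M}}_{g,n}}\kappa_1^{3g-3+n}$,
with exponent and factorial indexed by the complex dimension $3g-3+n$, not $2g-3+n$. Closing the gap between $3g-3+n$ and $2g-3+n$ encodes the exact exponential growth rate of the top $\kappa_1$-self-intersection, which is itself the core content of the Mirzakhani--Zograf theorem and cannot be read off from the cohomological normalisation alone. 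Beyond that, your generating-function route --- positing a dominant square-root singularity for $F_n(z)=\sum_g a_{g,n}z^g$ and propagating it through the recursion by Flajolet--Odlyzko singularity analysis --- is asserted rather than argued: locating the singularity, identifying its type, showing it survives the recursion, and obtaining uniformity in $n$ are precisely what you would need to prove, and you supply no mechanism for any of them. It is also not the route taken in \cite{MZ15}: Mirzakhani and Zograf work directly with the recursion for the volume polynomials and the string and dilaton equations, estimate ratios such as $V_{g,n+1}/(2gV_{g,n})$ and $V_{g-1,n+2}/V_{g,n}$, and prove convergence of the normalised sequence $a_{g,n}\sqrt{g}$ by an inductive bootstrap, rather than by analytic study of a generating function near its singularity. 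If you want to pursue the singularity-analysis route, the first nontrivial task would be to convert Mirzakhani's recursion into a closed functional equation for $F_n$ and to verify that the chosen normalisation places the radius of convergence at $1$ with a $(1-z)^{-1/2}$-type singular part; none of this is supplied.
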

Notice in particular that the volume of the moduli space is finite and hence there is a probability measure on the moduli space called the \textit{Weil-Petersson probability measure}, $\P_{g,n}$. If $A\subseteq\mathcal{M}_{g,n}$ we will write
	\begin{align*}
		\mathbb{P}_{g,n}(A)= \frac{1}{V_{g,n}}\int_{\mathcal{M}_{g,n}}\1_A(X)\df X,
	\end{align*}
where we use $\df X$ as shorthand for the Weil-Petersson volume measure and $X$ for an element of the moduli space. Moreover, one can determine the expectation of a measurable function $F \colon \mathcal{M}_{g,n}\to\R$ with respect to this measure in the usual way through the expression
	\begin{align*}
		\mathbb{E}_{g,n}(F)= \frac{1}{V_{g,n}}\int_{\mathcal{M}_{g,n}} F(X)\df X.
	\end{align*}
An extremely useful result that we will use for calculating integrals of certain functions over moduli space will be \textit{Mirzakhani's integral formula}. For this, we need to introduce the notion of cutting open a surface along a system of curves. To this end, recall that in the free homotopy class of a simple closed curve on a hyperbolic surface, there exists a unique simple closed geodesic minimising length amongst all curves in the homotopy class. When we consider a simple closed curve, we will always be considering the free homotopy class or simple closed geodesic representative in this class. In the following we will consider the notion of \textit{multicurves}.

\begin{definition} If $\gamma_1,\dots,\gamma_k$ are homotopically distinct and simple closed curves, we define their \textit{multicurve} as the formal sum $\gamma = \sum_{i = 1}^k \gamma_i$ which gives a union of curves in $S_g$. 
\end{definition}

\par 

We now seek to understand how such a multicurve cuts the surface. Fix a multicurve $\gamma=\sum_{i=1}^k\gamma_i$ and denote by $\Sigma_g\setminus\gamma$ the possibly disconnected surface with $q\geq 1$ connected components and $2k$ boundary components formed by cutting $\Sigma_g$ along the $\gamma_i$. Each such curve component $\gamma_i$ in $\gamma$ provides precisely two boundary components on $\Sigma_g\setminus\gamma$. Fix an order $\Gamma = (\gamma_1,\ldots,\gamma_k)$ for the curves in $\gamma$ and suppose that $\mathbf{x}=(x_1,\ldots,x_k)\in \mathbb{R}^k_{+}$. Denote by
	\begin{align*}
		\mathcal{M}(\Sigma_{g}\setminus\gamma, \ell(\Gamma)=\mathbf{x}),
	\end{align*}
the moduli space of hyperbolic surfaces homeomorphic to $\Sigma_{g}\setminus\gamma$ such that the length of $\gamma_i$ satisfies $\ell(\gamma_i)=x_i$. Moreover, set
	\begin{align*}
		V_g(\Gamma, \mathbf{x}) = \mathrm{Vol}(\mathcal{M}(\Sigma_{g}\setminus\gamma, \ell(\Gamma)=\mathbf{x})),
	\end{align*}
to be the volume of this moduli space. We may write $\Sigma_g\setminus\gamma$ as the disjoint union of its connected components so that
	\begin{align*}
		\Sigma_g\setminus\gamma = \bigsqcup_{i=1}^q \Sigma_{g_i,n_i},
	\end{align*}
with $\sum_{i=1}^q n_i = 2k$. For the moduli space above, we then have that
	\begin{align*}
		\mathcal{M}(\Sigma_{g}\setminus\gamma, \ell_\Gamma=\mathbf{x}) \cong \prod_{i=1}^q \mathcal{M}_{g_i,n_i}(x_{i_1},\ldots,x_{i_{n_i}}),
	\end{align*}
where the $i_1,\ldots,i_{n_i}$ are the indices corresponding to the multicurve components $\gamma_{i_1},\ldots,\gamma_{i_{n_i}}$ that form the boundary of the corresponding connected component. The volume is then just
	\begin{align*}
		V_g(\Gamma, \mathbf{x}) = \prod_{i=1}^q V_{g_i,n_i}(x_{i_1},\ldots,x_{i_{n_i}}).
	\end{align*}

For Mirzakhani's integral formula, one considers the integral of so-called geometric functions on the moduli space. These are defined from a multicurve such as $\gamma$ above. Indeed, let $F \colon \R_+^k\to\R_+$ be a symmetric measurable function, and define $F_\gamma \colon \mathcal{M}_g\to\R_+$ by
	\begin{align*}
		F_\gamma(X) \coloneqq \sum_{\sum_{i=1}^k\alpha_i\in\mathrm{Mod}_g(\Sigma_g)\cdot \gamma} F(\ell_X(\alpha_1),\ldots,\ell_X(\alpha_k))
	\end{align*}
where $\ell_X(\gamma_i)$ is the length of the simple closed geodesic in the free homotopy class of the image of $\gamma_i$ under the marking on $X$. Moreover, define 
	\begin{align*}
		M(\gamma)=|\{i = 1,\dots,k :  \gamma_i\ \text{separates a handle from the surface}\}|,
	\end{align*}
and 
	\begin{align}\label{eq:sym}
		\mathrm{Sym}(\gamma) \coloneqq \mathrm{Stab}(\gamma)/\bigcap_{i=1}^k\mathrm{Stab}(\gamma_i).
	\end{align} 
Here $\mathrm{Stab}(\gamma) = \{h\in \mathrm{Mod}_g(\Sigma_g) : h\cdot \gamma = \gamma\}$ is the stabiliser of the multicurve $\gamma$ and similarly $\mathrm{Stab}(\gamma_i) =  \{h\in \mathrm{Mod}_g(\Sigma_g) : h\cdot \gamma_i = \gamma_i\}$  of the single curve $\gamma_i$, $i = 1,\dots,k$, under the action of the mapping class group $\mathrm{Mod}_g(\Sigma_g)$. Mirzakhani's integral formula is then stated as follows.
\begin{thm}[{Mirzakhani~\cite[Theorem 7.1]{Mir07}}]
\label{thm: Mirzakhani Integral Formula}
For any multicurve $\gamma = \sum_{i = 1}^k \gamma_i$ and a symmetric measurable function $F \colon \R^k_+\to\R_+$, one has
	\begin{align*}
		\int_{\mathcal{M}_g}F_\gamma(X)\df X = \frac{1}{2^{M(\gamma)}|\mathrm{Sym}(\gamma)|}\int_{\R^k_+} F(\mathbf{x})V_{g}(\Gamma,\mathbf{x})\mathbf{x} \cdot d\mathbf{x}
	\end{align*}
	where $\mathbf{x} \cdot d\mathbf{x} = x_1\cdots x_k dx_1 \wedge \dots \wedge d x_k$ and $\Gamma = (\gamma_1,\dots,\gamma_k)$.
\end{thm}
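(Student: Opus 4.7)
The plan is to establish Mirzakhani's integral formula by an unfolding argument combined with the explicit form of the Weil--Petersson symplectic structure in Fenchel--Nielsen coordinates adapted to the multicurve $\gamma$. First I would pass from $\cM_g = \mathcal{T}_g/\mathrm{Mod}_g(\Sigma_g)$ to the intermediate cover $\mathcal{T}_g/\mathrm{Stab}(\gamma)$, whose natural projection to $\cM_g$ has fibre over $X$ equal to the mapping class group orbit $\mathrm{Mod}_g(\Sigma_g)\cdot \gamma$. Unfolding the summation defining $F_\gamma$ then rewrites the integral on $\cM_g$ as a single integral on this intermediate cover:
$$\int_{\cM_g} F_\gamma(X)\,\df X \;=\; \int_{\mathcal{T}_g/\mathrm{Stab}(\gamma)} F\bigl(\ell_X(\gamma_1),\ldots,\ell_X(\gamma_k)\bigr)\,\df X.$$

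Second, I would coordinatise $\mathcal{T}_g$ using Fenchel--Nielsen coordinates adapted to $\gamma$: the lengths $\ell_i=\ell_X(\gamma_i)$ and twists $\tau_i$ along each curve $\gamma_i$, together with Fenchel--Nielsen coordinates on each connected piece of $\Sigma_g\setminus\gamma$. These internal coordinates parametrise a product $\prod_j \mathcal{T}_{g_j,n_j}(\mathbf{x}_j)$ of Teichm\"uller spaces with prescribed boundary lengths coming from the components of $\mathbf{x}=(x_1,\ldots,x_k)$. Wolpert's magic formula yields the factorisation of the Weil--Petersson volume form
$$\df\mathrm{Vol}_{wp} \;=\; \bigwedge_{i=1}^k \bigl(\df\ell_i\wedge \df\tau_i\bigr) \wedge \df\mathrm{Vol}_{wp,\mathrm{int}}.$$

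Third, I would carry out the quotient by $\mathrm{Stab}(\gamma)$ in two stages. The subgroup $\bigcap_{i=1}^k \mathrm{Stab}(\gamma_i)$ contains the Dehn twists along each $\gamma_i$, which make each $\tau_i$ take values in $\R/\ell_i\Z$, and further contains the mapping classes acting on the cut pieces; quotienting by these reassembles the internal integral as $\prod_j V_{g_j,n_j}(\mathbf{x}_j) = V_g(\Gamma,\mathbf{x})$ while producing a factor $\prod_i x_i$ from integrating the twists. The residual quotient by the finite group $\mathrm{Sym}(\gamma)$ defined in \eqref{eq:sym} contributes the factor $1/|\mathrm{Sym}(\gamma)|$. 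The remaining $1/2^{M(\gamma)}$ comes from an extra $\Z/2$ symmetry $\tau_i\mapsto -\tau_i$ in $\bigcap_j \mathrm{Stab}(\gamma_j)$ generated by the hyperelliptic involution of the one-handled torus bounded by $\gamma_i$ whenever $\gamma_i$ separates off a handle, which halves the fundamental domain for the corresponding twist.

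The principal obstacle is the careful bookkeeping of these symmetries: one must identify precisely which subgroups of the mapping class group act trivially on the length functions, which act by shifting the twists, and which act by the extra involutions around separating handle curves, and verify that the volume forms push forward correctly at each stage. The rest is essentially a Fubini-style computation once Wolpert's formula is in hand.
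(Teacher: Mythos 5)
The paper imports this theorem directly from Mirzakhani \cite[Theorem 7.1]{Mir07} without supplying a proof, so there is no in-paper argument to compare against. Your sketch is a faithful reconstruction of Mirzakhani's original unfolding argument: passing to the intermediate cover $\mathcal{T}_g/\mathrm{Stab}(\gamma)$, applying Wolpert's magic formula in Fenchel--Nielsen coordinates adapted to $\gamma$, and carefully staging the quotient by $\mathrm{Stab}(\gamma)$ to produce the twist factor $\mathbf{x}\cdot d\mathbf{x}$, the cut-surface volume $V_g(\Gamma,\mathbf{x})$, and the normalising constant $1/(2^{M(\gamma)}|\mathrm{Sym}(\gamma)|)$, including the handle-separating involutions responsible for $2^{M(\gamma)}$.
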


\section{Short Geodesic Loops on Random Surfaces}\label{s:randomproof}
\label{Random Surfaces}
Consider a compact hyperbolic surface $X=\Gamma\backslash\H$ with a fundamental domain $D$. Recall that the assumption on the surfaces we considered in \eqref{eqn: ShortGeodesicBound} was the existence of an $R\geq 0$ such that for all $z,w\in D$
	\begin{align*}
		|\{\gamma\in\Gamma : d(z,\gamma w)<r\}| \leq C_0(\delta)C(X) e^{\delta r},\ \ \text{for all $\delta>0$ and $r\leq R$.}
	\end{align*}
In this section, we demonstrate that one can take $R=c\log(g)$ for sufficiently small $c$ independent of the genus $g$ if the surface $X$ has injectivity radius bounded below by $g^{-b}$ for some $b>0$ to be determined (also independently of $g$) such that the assumption is satisfied with probability tending to one as $g\to\infty$. For this, we show the sufficient condition that about any point $z\in D$ there is at most one primitive geodesic loop on the surface based at $z$ with length at most $c\log(g)$ with probability tending to one as $g\to\infty$; that is, we show Theorem \ref{t:proba}. An outline of the proof of this result is given as follows.

\subsection{Outline of the proof}
Theorem \ref{t:proba} is proven using contradiction via the following methodology. The general idea is close to the methods used by Mirzakhani and Petri \cite{MP17}: we want to deduce from the presence of two distinct geodesic loops at one point the existence of a separating multicurve, and show that the probability for such a multicurve to exist in the large genus limit tends to 0. An important difference with \cite{MP17} is that we deal here with geodesic loops instead of closed geodesics, which most notably behave differently in terms of self-intersections (Lemma \ref{lem: loop intersections}). Our main contribution is then a generalised volume product formula (Lemma \ref{lem: Sum-Volume Product}) based on finer estimates of volumes of moduli spaces from Mirzakhani and Zograf \cite{MZ15}. The dependence of all the constants on the genus renders the analysis considerably more involved, and we can also highlight in particular that in the proof of Theorem \ref{thm: Separating Multicurve Almost Surely Does Not Exist} we need additional steps to reduce our analysis to a certain topological type of multicurves that we call \emph{minimally separating}.

 We now give a detailed outline.
\begin{enumerate}
		\item Given two primitive geodesics loops of length at most $c\log(g)$ on a hyperbolic surface $X$ of genus $g$ passing through the same point, we determine an upper bound on the number of self-intersections that these two curves can have between one another and with themselves. (Lemma \ref{lem: loop intersections})
		\item The bound determined then provides an upper bound on the number of components in a multicurve obtained by taking a regular neighbourhood of the original curves and we show that for large enough genus $g$, this multicurve is separating and has total length at most $4c\log(g)$. (Lemmas \ref{lem: filling} and \ref{lem: Number Of Intersections Between Curves})
		\item We next prove an estimate on the order of growth on the sum of the products of the volumes of moduli spaces obtained by cutting along a multicurve as above, over all possible configurations of subsurface genera that such a multicurve could cut into given the number of components in the curve. (Lemma \ref{lem: Sum-Volume Product})
		\item Using this estimate, we show that asymptotically as $g\to\infty$ such a multicurve does not exist on the surface with probability tending to one as $g\to\infty$. This is done by computing an upper bound on the expected number of separating multicurves with a bounded number of components (computed by (2)) with length at most $4c\log(g)$ that can exist on a surface and showing it asymptotically tends to zero as $g\to\infty$. (Theorem \ref{thm: Separating Multicurve Almost Surely Does Not Exist})
		\item One can then conclude that with probability tending to $1$ as $g\to +\infty$, given a surface of genus $g$ there is at most one primitive geodesic loop of length at most $c\log(g)$.
	\end{enumerate}
		
\subsection{Bounds on $N_{c\log(g)}(X)$ are sufficient for condition \eqref{eqn: ShortGeodesicBound}}
Before starting the proof of Theorem \ref{t:proba}, we provide a simple argument to demonstrate that a surface for which $N_{r}(X)\leq n$ satisfies condition \eqref{eqn: ShortGeodesicBound} with the implied constant $C(X)$ dependent upon the injectivity radius of the surface. Recall that a geodesic loop based at a point is primitive if it is generated by a primitive element of the group $\Gamma$.
\begin{lemma}
\label{lem: Sufficient Assumption}
Suppose that $X=\Gamma/\H$ is a compact hyperbolic surface for which there exists an $R>0$ such that $N_R(X)\leq n$. Then for each $z,w\in D$,
	\begin{align*}
		\left|\left\{\gamma\in\Gamma : d(z,\gamma w)\leq \frac{r}{2}\right\}\right|\leq \frac{2nr}{\mathrm{InjRad}(X)}+2,\ \text{for all}\ r\leq R.
	\end{align*}
\end{lemma}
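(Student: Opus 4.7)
The plan is to reduce the count of $\Gamma$-translates of $w$ near $z$ to a count of translates of $w$ near itself, and then to invoke the hypothesis $N_R(X) \leq n$ together with the systolic lower bound on translation lengths of hyperbolic elements.

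If the set $S := \{\gamma \in \Gamma : d(z,\gamma w) \leq r/2\}$ is empty the bound is trivial, so fix some $\gamma_0 \in S$. For any $\gamma \in S$, the triangle inequality gives
\[
d(w,\gamma_0^{-1}\gamma w) \;=\; d(\gamma_0 w,\gamma w) \;\leq\; d(\gamma_0 w,z) + d(z,\gamma w) \;\leq\; \tfrac{r}{2} + \tfrac{r}{2} \;=\; r,
\]
so left multiplication by $\gamma_0^{-1}$ injects $S$ into $T_w := \{\eta \in \Gamma : d(w,\eta w)\leq r\}$. It therefore suffices to prove $|T_w| \leq 2nr/\InjRad(X) + 2$.

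Next I would identify $T_w\setminus\{e\}$ with the set of geodesic loops on $X$ based at (the image of) $w$ of length at most $r$, via the standard bijection $\eta \leftrightarrow $ projection of the unique geodesic segment from $w$ to $\eta w$; this is well-defined and injective because $\Gamma$ acts freely on $\H$. Every non-identity element of the torsion-free Fuchsian group $\Gamma$ then decomposes as a power $g^j$ with $g$ primitive and $j \in \mathbb{Z}\setminus\{0\}$, unique up to the involution $g \leftrightarrow g^{-1}$. By the hypothesis $N_R(X)\leq n$ and $r\leq R$, there are at most $n$ primitive geodesic loops at $w$ of length $\leq r$, so we need sum only over at most $n$ primitive classes $g$ with $d(w,gw)\leq r$; primitives with $d(w,gw)>r$ contribute nothing, because $|j|\mapsto d(w,g^jw)$ is non-decreasing for a hyperbolic isometry.

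Finally, for each such primitive $g$ I would count the non-zero integers $j$ with $d(w,g^jw)\leq r$. The translation length $\tau(g)$ equals the length of the corresponding closed geodesic, and the systole of $X$ equals twice the injectivity radius, so $\tau(g)\geq 2\InjRad(X)$. Combined with the standard inequality $d(v,hv)\geq\tau(h)$ applied to $h=g^j$ (whose translation length is $|j|\tau(g)$), this gives $d(w,g^jw)\geq 2|j|\InjRad(X)$, forcing $|j|\leq r/(2\InjRad(X))$. Summing at most $r/\InjRad(X)$ non-zero values of $j$ across $n$ primitive classes and adding $1$ for the identity yields $|T_w|\leq 1 + nr/\InjRad(X)\leq 2 + 2nr/\InjRad(X)$, as desired. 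The only bookkeeping subtlety is the oriented-versus-unoriented convention for counting primitive loops; the factor-of-two slack in the stated bound comfortably absorbs any such ambiguity, so I do not expect this to be a genuine obstacle.
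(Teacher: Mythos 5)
Your proof is correct and follows essentially the same approach as the paper's: reduce the two-point count to a one-point count, then decompose non-identity elements as powers of primitives and invoke $N_R(X)\leq n$ together with a systolic lower bound. Two minor structural differences are worth flagging. First, you pass from $\{\gamma: d(z,\gamma w)\leq r/2\}$ to $\{\eta: d(w,\eta w)\leq r\}$ via a direct injection $\gamma\mapsto\gamma_0^{-1}\gamma$, whereas the paper runs a pigeonhole-style contradiction; these are equivalent, but yours is cleaner. Second, and more substantively, your bound on the number of powers $g^j$ of a primitive $g$ within radius $r$ rests on the inequality $d(w,g^jw)\geq\tau(g^j)=|j|\tau(g)\geq 2|j|\InjRad(X)$, which is both rigorous and sharper than the paper's argument (the paper appeals to each loop having length at least $\InjRad(X)$, which as stated does not by itself bound the number of admissible powers $j$; the translation-length inequality is the clean way to make that step precise, and it is effectively what is needed). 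As a result you obtain $1+nr/\InjRad(X)$ rather than the paper's $2+2nr/\InjRad(X)$, and your observation that the factor-of-two slack absorbs the oriented/unoriented ambiguity in counting primitive loops is correct. No gaps.
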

\begin{proof}
Suppose that $N_r(X)\leq n$; we will first count the number of non-identity $\gamma\in\Gamma$ for which $d(z,\gamma z)\leq r$ for a given $z\in\H$. By definition, each primitive element $\gamma\in\Gamma$ that satisfies $d(z,\gamma z)\leq r$ produces a primitive geodesic loop based at $z$ on the surface of length at most $r$. From the assumption that $N_r(X)\leq n$, this means there can be at most $n$ primitive elements that satisfy this distance bound. \par 
Given such a primitive $\gamma$, the powers $\gamma^i$ will also satisfy the distance bound $d(z,\gamma^i z)\leq r$ if they generate short enough geodesic loops. As $z$ does not necessarily lie on the axis of $\gamma$, the geodesic loop arising from the projection of the geodesic between $z$ and $\gamma^i z$ onto the surface may have length shorter than $i$ times the distance $d(z,\gamma z)$. By definition however, it has length at least the injectivity radius of the surface and thus at most the powers 
	\begin{align*}
		\gamma^{\pm i}, \quad \text{for } i=1,\ldots,\left\lfloor \frac{r}{\mathrm{InjRad}(X)}\right\rfloor,
	\end{align*}
can also satisfy $d(z,\gamma^i z)\leq r$. Such powers will however account for all of the possible group elements with $d(z,\gamma z)\leq r$. Indeed, if $\gamma'$ is an element of $\Gamma$ with $d(z,\gamma'z)\leq r$ then it is either the identity or a power of a primitive element, say $\gamma$. This follows from the fact that the surface is compact, and so all elements are hyperbolic and powers of primitives \cite[Lemma 5.4]{Ber16}. In this latter case, we necessarily have that
	\begin{align*}
		d(z,\gamma z)\leq d(z,\gamma' z).
	\end{align*} 
To see this, first suppose that $\gamma$ is a dilation of the form $\gamma \colon z\mapsto az$ for some $a>0, a\neq 1$. Then, $d(z,\gamma z)\leq d(z,\gamma^m z)$ for all $z\in\mathbb{H}$ and $m\in\mathbb{Z}$ by the monotone increasing property of the $\cosh$ function and the explicit formula for the hyperbolic distance between two points given by
	\begin{align*}
		\cosh(d(z,w))= 1+\frac{(\mathrm{Re}(z)-\mathrm{Re}(w))^2+(\mathrm{Im}(z)-\mathrm{Im}(w))^2}{2\mathrm{Im}(z)\mathrm{Im}(w)},
		\end{align*}
to compare both sides of the inequality. For a general $\gamma\in\Gamma$, we can use the fact that since $\gamma$ is hyperbolic, there is some $g\in\mathrm{PSL}(2,\R)$ for which $\gamma = g\delta g^{-1}$ where $\delta$ is a dilation as above. Then, 
	\begin{align*}
		d(z,\gamma z) = d(g^{-1}z,\delta g^{-1}z) \leq d(g^{-1}z,\delta^mg^{-1}z) = d(z,\gamma^m z),
	\end{align*}
for any $m\in\mathbb{Z}$, proving the desired inequality. Thus, this means that if $d(z,\gamma'z)\leq r$, then we also have $d(z,\gamma z)\leq r$ for $\gamma$ the primitive of $\gamma'$, and hence $\gamma'$ is accounted for as a power of one of the primitive elements that satisfy the distance inequality. With this in mind, we obtain the bound
	\begin{align*}
		|\{\gamma\in\Gamma\setminus\{\mathrm{id}\} : d(z,\gamma z)\leq r\}|\leq 2n\left\lfloor\frac{r}{\mathrm{InjRad}(X)}\right\rfloor.
	\end{align*}
For fixed $z,w\in\H$ we now count the number of $\gamma\in\Gamma$ with $d(z,\gamma w)\leq \frac{r}{2}$. Suppose there were at least $m=2n\lfloor\frac{r}{\mathrm{InjRad}(X)}\rfloor+2$ distinct non-identity elements with this property labelled $\gamma_1,\ldots,\gamma_m$. The elements $\gamma_j\gamma_1^{-1}$ are then distinct non-identity elements in $\Gamma$ for $j=2,\ldots,m$. Moreover, for each $j$ we have 
	\begin{align*}
		d(\gamma_1w, (\gamma_j\gamma_1^{-1})(\gamma_1w))\leq d(\gamma_1w,z)+d(\gamma_jw,z)<r.
	\end{align*}
This means that we have found $m-1$ distinct, non-identity elements in $\Gamma$ for which $d(\gamma_1w,\gamma(\gamma_1w))\leq r$ which is a contradiction to the above counting argument. This means that there can be at most $m-1$ such elements, and so including the identity we obtain
	\begin{align*}
		\left|\left\{\gamma\in\Gamma : d(z,\gamma w)\leq \frac{r}{2}\right\}\right|\leq \frac{2nr}{\mathrm{InjRad}(X)}+2.
	\end{align*}
\end{proof}		

\subsection{Geometry of loops and extracting a separating multicurve}

Suppose now that $N_r(X)>1$. Then, there exists some $z\in X$ that has at least two primitive geodesic loops passing through it of length at most $r$. With $r\leq c\log(g)$ for some $c>0$ to be determined, we next demonstrate that two such loops give rise to a certain separating multicurve on the surface $X$ for large enough genus $g$. This result requires an improvement on the technique of Mirzakhani and Petri \cite[Proposition 4.5]{MP17} to allow for the curve lengths to have some dependence on the genus $g$ and for the curves themselves to be geodesic loops rather than closed geodesics. We will first require the following lemma to determine the number of intersections between two such loops that have a finite number of intersections. Note the case where the two have an infinite number of intersections happens only when one loop is a subloop of the other and we will use primitivity of the loops to deal with this later in Lemma \ref{lem: Number Of Intersections Between Curves}.

\begin{lemma}
\label{lem: loop intersections}
Suppose that $\alpha$ and $\beta$ are geodesic loops of lengths $\ell(\alpha)$ and $\ell(\beta)$ respectively which have a finite number of intersections between them. Then,
	\begin{align*}
		i(\alpha,\beta) \leq \left\lceil \frac{2\ell(\alpha)}{\mathrm{InjRad}(X)}\right\rceil\left\lceil \frac{2\ell(\beta)}{\mathrm{InjRad}(X)}\right\rceil,
	\end{align*}
where $i(\alpha,\beta)=\#(\alpha\cap\beta)$ denotes the number of intersections between the two curves.
\end{lemma}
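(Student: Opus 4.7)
The plan is to chop each loop into short geodesic segments and then observe that any two such segments intersect at most once in a hyperbolic disc.

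Concretely, set $r = \InjRad(X)/2$ and subdivide $\alpha$ into consecutive geodesic sub-arcs $\alpha_1,\dots,\alpha_{n_\alpha}$, each of length at most $r$, where $n_\alpha = \lceil \ell(\alpha)/r\rceil = \lceil 2\ell(\alpha)/\InjRad(X)\rceil$. Do the same for $\beta$ to obtain $\beta_1,\dots,\beta_{n_\beta}$ with $n_\beta = \lceil 2\ell(\beta)/\InjRad(X)\rceil$. Since the intersections $\alpha \cap \beta$ are finite in number, the total count $i(\alpha,\beta)$ equals the sum over all pairs $(i,j)$ of $\#(\alpha_i \cap \beta_j)$ (at worst overcounting intersection points which happen to lie on the subdivision breakpoints, which only helps the desired upper bound).

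The key step is to show that, for each fixed pair $(i,j)$, $\alpha_i$ and $\beta_j$ meet in at most one point. If $\alpha_i \cap \beta_j = \emptyset$ there is nothing to prove; otherwise fix $q \in \alpha_i \cap \beta_j$. Since $\alpha_i$ and $\beta_j$ are geodesic arcs of length $\leq r = \InjRad(X)/2$ containing $q$, every point of $\alpha_i \cup \beta_j$ lies within distance $\InjRad(X)/2$ of $q$. By the definition of the injectivity radius of the surface, the ball $B_X(q,\InjRad(X))$ is isometric to a hyperbolic ball in $\H$, so we may lift $\alpha_i \cup \beta_j$ to a pair of geodesic segments $\tilde\alpha_i, \tilde\beta_j$ inside a single hyperbolic disc. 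Two geodesic segments in $\H$ intersect in at most one point unless they lie on a common complete geodesic, in which case their intersection is a sub-segment; this latter possibility is excluded because the finiteness of $\#(\alpha\cap\beta)$ prevents the lifts from overlapping on a positive-length arc. Hence $\#(\tilde\alpha_i \cap \tilde\beta_j) \leq 1$, and projecting back yields $\#(\alpha_i \cap \beta_j) \leq 1$.

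Summing over all pairs gives
\[
 i(\alpha,\beta) \;\leq\; \sum_{i=1}^{n_\alpha}\sum_{j=1}^{n_\beta} \#(\alpha_i \cap \beta_j) \;\leq\; n_\alpha \, n_\beta \;=\; \left\lceil \frac{2\ell(\alpha)}{\InjRad(X)}\right\rceil\left\lceil \frac{2\ell(\beta)}{\InjRad(X)}\right\rceil,
\]
which is the desired bound. The only potentially delicate step is the separation into injectivity-radius-sized chunks and verifying that the lifted segments lie in a single embedded hyperbolic disc; everything else is elementary hyperbolic geometry.
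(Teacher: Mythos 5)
Your proof is correct and follows essentially the same route as the paper: subdivide both loops into geodesic sub-arcs of length $\InjRad(X)/2$, observe that any two such sub-arcs sharing a point lie in an embedded hyperbolic ball and hence (using the finiteness hypothesis to rule out overlapping lifts on a common geodesic) meet at most once, and multiply the counts. The paper phrases the counting via a maximal segment rather than a fixed subdivision, but the underlying estimate and the key embedded-ball observation are identical.
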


\begin{proof}
Consider a geodesic segment $\bar{\alpha}$ of $\alpha$ of length $\rho=\frac{1}{2}\mathrm{InjRad}(X)$ which has the maximal number of intersections with $\beta$ amongst all geodesic segments of $\alpha$ of length $\rho$. In this way, one obtains the upper bound
	\begin{align*}
		i(\alpha,\beta) \leq \left\lceil \frac{\ell(\alpha)}{\rho}\right\rceil i(\bar{\alpha},\beta).
	\end{align*}
Similarly, dividing $\beta$ into geodesic segments of length $\rho$, we can bound this latter intersection by the number of such segments multiplied by the intersection number between $\bar{\alpha}$ and the segment of $\beta$ with the most intersections with $\bar{\alpha}$, say $\bar{\beta}$ so that
	\begin{align*}
		i(\alpha,\beta) \leq \left\lceil \frac{\ell(\alpha)}{\rho}\right\rceil \left\lceil \frac{\ell(\beta)}{\rho}\right\rceil i(\bar{\alpha},\bar{\beta}).
	\end{align*}
Suppose that $\bar{\alpha}$ and $\bar{\beta}$ intersect at some point $p$. Then, by construction, both of these geodesic segments lie in $B_\rho(p)$. This ball however is an embedded ball in the surface by definition of $\rho$, and so they cannot intersect at another point in the ball (otherwise we would have distinct geodesics in the plane intersecting in more than one place). This gives $i(\bar{\alpha},\bar{\beta})\leq 1$ and the result follows.
\end{proof}

Notice that the previous result can be easily modified to show that the same bounds hold on the number of self-intersections of a single loop with multiplicity. By multiplicity, we mean that if the loop intersects itself in the same point multiple times then we count each of these occurrences individually. For example, if at a self-intersection point there are 6 emanating curve segments then it will mean the curve has crossed through that point three times and hence intersected itself twice so this will be counted as two intersections.  In summary this means that the total number of intersections between two curves of length at most $c\log(g)$ for some $c>0$ and themselves is 
$$O((c\log(g))^2\mathrm{InjRad}(X)^{-2}).$$ 
Recall that we assumed that $\mathrm{InjRad}(X)\geq g^{-b}$ for some $b>0$ to be chosen later (independently of $g$). With this condition, the number of intersections will be $O(c^2g^{2b}(\log(g))^2)$.\par
In constructing our desired multicurve from the geodesic loops, we will be taking a regular neighbourhood and thus need to be able to deduce properties about the resulting subsurface that is bounded by the components of the neighbourhood. For this, we will need the following result, that is an adaptation to surfaces with boundaries and non-simple curves of \cite[Lemma 2.1]{AH13}. We will say that two curves $\alpha, \beta$ on a surface $\Sigma_{g,n}$ of genus $g$ with $n$ boundaries are \emph{filling} if $\Sigma_{g,n} \setminus (\alpha \cup \beta)$ is a disjoint union of topological disks and annuli, such that each annulus is homotopic to a boundary component of $\Sigma_{g,n}$.

\begin{lemma}
\label{lem: filling}
Suppose that $\alpha$ and $\beta$ are two curves that fill $\Sigma_{g,n}$ whose intersection with one another and themselves (if there are any) are transversal, then
$$ i(\alpha,\beta)+i(\alpha,\alpha)+i(\beta,\beta) \geq 2g + n - 2,$$
where $i(\alpha_1, \alpha_2)$ is the number of intersections of the curves $\alpha_1$ and $\alpha_2$; recall as above that when $\alpha_1=\alpha_2$ then this is counted with multiplicity.
\end{lemma}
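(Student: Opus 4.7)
The approach is a standard Euler-characteristic count for the graph $G = \alpha \cup \beta$ embedded in $\Sigma_{g,n}$. Regard $G$ as a 1-complex whose vertex set consists of the intersection points (both mutual crossings of $\alpha, \beta$ and the self-crossings of each) and whose edges are the smooth arcs between consecutive vertices along $\alpha$ or $\beta$. Denote the totals by $V$ and $E$. At a vertex $v$, write $k_v$ and $m_v$ for the number of branches of $\alpha$ and $\beta$ respectively passing through $v$, so that $v$ has local degree $2(k_v + m_v)$ by transversality. Summing degrees yields
$$ E - V \,=\, \sum_v (k_v + m_v - 1). $$

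Next I would apply additivity of compactly supported Euler characteristic to the decomposition $\Sigma_{g,n} = G \sqcup (\Sigma_{g,n} \setminus G)$, with $G$ closed and its complement open. The left side is $\chi(\Sigma_{g,n}) = 2 - 2g - n$, the graph contributes $\chi_c(G) = V - E$, and the filling hypothesis decomposes the complement as a disjoint union of open disk regions (each contributing $\chi_c = 1$) and annular regions (each contributing $\chi_c = 0$, since $\chi_c(S^1 \times I) = 0$ regardless of whether $I$ is open, closed, or half-open). Writing $d$ for the number of disk regions, this yields
$$ (V - E) + d \,=\, 2 - 2g - n, \qquad \text{hence} \qquad E - V \,=\, d + 2g + n - 2 \;\geq\; 2g + n - 2. $$

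Finally, I would identify $\sum_v (k_v + m_v - 1)$ with $i(\alpha,\beta) + i(\alpha,\alpha) + i(\beta,\beta)$. Under the conventions used in the paper --- namely $i(\alpha,\beta) = \#(\alpha \cap \beta)$, and each $k$-fold self-intersection counted as $k-1$ self-intersections, as described after Lemma \ref{lem: loop intersections} --- a short case analysis on the possible values of $(k_v, m_v)$ shows that $v$ contributes exactly $k_v + m_v - 1$ to the sum, so the identity is in fact sharp. Under the alternative ``geometric'' convention counting all branch pairs, $v$ instead contributes $\binom{k_v + m_v}{2} \geq k_v + m_v - 1$ for $k_v + m_v \geq 2$, so the required inequality is only strengthened.

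The main obstacle is the Euler-characteristic bookkeeping for the annular regions that meet $\partial \Sigma_{g,n}$: their topological type may be open, closed, or half-open annulus depending on whether a boundary circle of $\Sigma$ lies in the region, and one must verify $\chi_c = 0$ in each case. This is immediate from the Künneth-type identity for $\chi_c$ together with $\chi_c(S^1) = 0$. An alternative that avoids $\chi_c$ is to cap each boundary of $\Sigma_{g,n}$ by a disk to obtain the closed surface $\hat{\Sigma}_g$ and apply the standard formula $V - E + F = 2 - 2g$; this requires the additional observation --- readily deduced from the ``homotopic to a boundary component'' clause of the filling definition --- that every boundary circle of $\Sigma_{g,n}$ is adjacent to a unique annular region, so that after capping one obtains exactly $d + n$ disk faces.
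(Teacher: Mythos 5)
Your proof is correct and follows essentially the same Euler-characteristic strategy as the paper's: form the graph $G = \alpha \cup \beta$ with vertices at the transversal crossings, compute $E - V = \sum_v(k_v + m_v - 1)$ and identify this with $i(\alpha,\beta)+i(\alpha,\alpha)+i(\beta,\beta)$, then use the filling hypothesis to classify the complementary faces. The only difference is technical and is in your favour: you dispose of the annular regions by additivity of the compactly supported Euler characteristic (each contributes $\chi_c = 0$, regardless of whether a boundary circle of $\Sigma_{g,n}$ lies inside), whereas the paper instead augments the $1$-skeleton with one extra vertex and two extra edges per annulus to make every complementary region a disk and then applies the classical $V - E + F = \chi$ formula; your version avoids that ad hoc construction and the attendant bookkeeping.
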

\begin{proof}
Because $\alpha$ and $\beta$ are filling, $\Sigma_{g,n} \setminus (\alpha \cup \beta)$ is a disjoint union of  topological disks and annuli. We can form the $1$-skeleton of a cellular decomposition of $\Sigma_{g,n}$ by considering the graph $\mathcal{G}(\alpha,\beta)$ whose vertex set is the set of intersection points both between $\alpha$ and $\beta$, and amongst themselves, and then adjoin to $\mathcal{G}(\alpha,\beta)$ one additional vertex and two additional edges per annuli. Let $i_0(\alpha,\beta)$ denote the number of intersection points between $\alpha$ and $\beta$ and the curves themselves counted without multiplicity, or in other words, the number of vertices in $\mathcal{G}(\alpha,\beta)$.\par 
	In the graph of $\mathcal{G}(\alpha,\beta)$ adjoined with the extra components, there will be $i_0(\alpha, \beta) + n$ vertices and $$2n+\frac{1}{2}\sum_{v\in\mathcal{G}(\alpha,\beta)} \mathrm{deg}_{\mathcal{G}(\alpha,\beta)}(v)$$ edges where $\mathrm{deg}_{\mathcal{G}(\alpha,\beta)}(v)$ denotes the degree of the vertex $v$ in the graph $\mathcal{G}(\alpha,\beta)$. Now, the sum of these degrees in $\mathcal{G}(\alpha,\beta)$ will be
		\begin{align*}
			4i_0(\alpha,\beta) + 2( i(\alpha,\beta)+i(\alpha,\alpha)+i(\beta,\beta) - i_0(\alpha,\beta) ) = 2(i(\alpha,\beta)+i(\alpha,\alpha)+i(\beta,\beta) + i_0(\alpha,\beta)).
		\end{align*}
	To see this, notice that each vertex appearing in the graph $\mathcal{G}(\alpha,\beta)$ will have degree 4 plus an extra 2 edges will emanate from a vertex for every additional crossing of one of the curves at that point. Hence the first term on the left hand side accounts for the base degree of 4 at each vertex and the second term accounts for the total number of additional crossings at all vertices in $\mathcal{G}(\alpha,\beta)$. This total number of additional crossings will be precisely the total number of crossings which is 
		\begin{align*}
			i(\alpha,\beta)+i(\alpha,\alpha)+i(\beta,\beta),
		\end{align*}
	minus the number of crossings that are the initial crossings of the curves (with each other or themselves) which is $i_0(\alpha,\beta)$. This means, that the total number of edges in this adjoined graph will be
		\begin{align*}
			2n+i(\alpha,\beta)+i(\alpha,\alpha)+i(\beta,\beta) + i_0(\alpha,\beta).
		\end{align*}
	Let $D$ be the number of $2$-cells in this cellular decomposition of the surface so that $D\geq n$. \par
	Then the Euler characteristic of $\Sigma_{g,n}$ is
	\begin{align*}
		\chi(S_{g,n}) = 2 - 2g - n &= i_0(\alpha,\beta) + n - (2n + i(\alpha,\beta)+i(\alpha,\alpha)+i(\beta,\beta) + i_0(\alpha,\beta)) + D\\
		&\geq -(i(\alpha,\beta)+i(\alpha,\alpha)+i(\beta,\beta))
	\end{align*}
By rearranging, we obtain $$ i(\alpha,\beta)+i(\alpha,\alpha)+i(\beta,\beta) \geq 2g - 2 + n.$$
\end{proof}

We can now show that two geodesic loops based at the same point imply the existence of a separating multicurve for large enough $g$.

\begin{lemma}
\label{lem: Number Of Intersections Between Curves}
Suppose that $\alpha$ and $\beta$ are primitive geodesic loops in the surface $X=\Sigma_g$ based at the same point with lengths bounded by $c\log(g)$ for some constant $c>0$. Moreover, assume that for some $0<b<\frac{1}{2}$, $\mathrm{InjRad}(X)>g^{-b}$. Then, there exists a separating multicurve $\gamma$ on $\Sigma_g$ consisting of $O(c^2g^{2b}(\log(g))^2)$ simple closed geodesics whose total length is bounded by $4c\log(g)$ for $g$ sufficiently large.
\end{lemma}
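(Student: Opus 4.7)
The proof breaks naturally into a construction step, a length bound, and a separation argument.

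First, I would observe that by primitivity the two loops $\alpha$ and $\beta$ can share only finitely many intersection points both with each other and with themselves. Indeed, two geodesic arcs in $\H$ that meet along more than a point must coincide along an arc of a common complete geodesic; lifting $\alpha$ and $\beta$ at the common basepoint, an infinite overlap would force the lifts of the two loops to lie along the same geodesic ray from $\widetilde z$, so that one primitive loop would be a sub-arc, hence a power, of the other, contradicting either primitivity or the distinctness of $\alpha$ and $\beta$. Reducing to the transverse, finitely-intersecting case, Lemma \ref{lem: loop intersections} together with the remark following its proof yields, under the hypotheses $\ell(\alpha), \ell(\beta) \leq c \log g$ and $\mathrm{InjRad}(X) \geq g^{-b}$, the bound
\[ I \;:=\; i(\alpha,\beta) + i(\alpha,\alpha) + i(\beta,\beta) \;=\; O\!\left(c^2 g^{2b} (\log g)^2\right). \]

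Next, I would construct the multicurve as the boundary of a regular neighborhood of $\alpha \cup \beta$. Regard $G = \alpha \cup \beta$ as an embedded graph in $\Sigma_g$ with $V$ vertices (the intersection points, including the basepoint) and $E = 2V$ edges, all vertices being $4$-valent, so $\chi(G) = -V$. A tubular neighborhood $N$ of $G$ retracts onto $G$, hence has Euler characteristic $-V$, and its number of boundary components is
\[ b_N \;=\; 2 - 2g_N + V \;\leq\; 2 + V \;=\; O\!\left(c^2 g^{2b} (\log g)^2\right). \]
To obtain an essential multicurve, I would enlarge $N$ by filling in every embedded disk in $\Sigma_g$ bounded by a null-homotopic component of $\partial N$, producing a subsurface $N^*$ whose boundary consists entirely of essential simple closed curves, and then replace each component of $\partial N^*$ by its unique simple closed geodesic representative to obtain the multicurve $\gamma$.

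For the length bound, the boundary of a regular neighborhood of width $\epsilon$ traces each edge of $G$ twice, contributing total length $2\ell(G) + O(\epsilon V)$; letting $\epsilon \to 0$, and using that geodesic representatives minimize length in their free homotopy classes together with $\partial N^* \subseteq \partial N$, I would conclude
\[ \ell(\gamma) \;\leq\; 2(\ell(\alpha) + \ell(\beta)) \;\leq\; 4 c \log g. \]
For the separating property, I would invoke Lemma \ref{lem: filling} with $n=0$: if $\alpha \cup \beta$ filled $\Sigma_g$, then $I \geq 2g - 2$. Since our bound gives $I = O(g^{2b} (\log g)^2)$ with $2b < 1$, this is $o(g)$, so for all $g$ sufficiently large the pair $\alpha, \beta$ cannot fill $\Sigma_g$. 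Therefore $\Sigma_g \setminus N^*$ is nonempty, and $\partial N^*$ (hence its geodesic representative $\gamma$) separates $\Sigma_g$ nontrivially.

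The most delicate point I anticipate is the first step, ruling out infinite intersections via primitivity; the remaining steps are combinatorial-topological and rely directly on Lemmas \ref{lem: loop intersections} and \ref{lem: filling}. The length computation also requires some care to verify that geodesic representatives plus the fact that $\partial N$ doubly covers $G$ yield the clean bound $4c\log g$ with no dependence on the intersection count.
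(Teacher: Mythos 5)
Your proof follows the same broad architecture as the paper's (use the intersection bound from Lemma~\ref{lem: loop intersections}, take a regular neighbourhood of $\alpha\cup\beta$, bound the number of boundary components and the total length, then invoke Lemma~\ref{lem: filling} to deduce the multicurve separates), but your first step contains a genuine gap. You claim that primitivity rules out the case where $\alpha$ and $\beta$ have infinitely many intersection points, arguing that if the lifts overlap along a ray then ``one primitive loop would be a sub-arc, hence a power, of the other.'' The ``hence a power'' does not follow. If the lift of $\alpha$ (say the shorter loop) is a sub-arc of the lift of $\beta$, all you learn is that $\beta$ passes through the basepoint $z$ at the intermediate time $\ell(\alpha)$. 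Since $\beta$ is a geodesic it is \emph{smooth} there, so it continues in the direction of $\alpha$'s \emph{arrival} at $z$, which is in general different from $\alpha$'s departure direction; the tail of $\beta$ after time $\ell(\alpha)$ is a new geodesic loop $\beta'$ and one simply has $\gamma_\beta = \gamma_\alpha\gamma_{\beta'}$ at the level of group elements — a product, not a power. This decomposition is perfectly compatible with both $\alpha$ and $\beta$ being primitive and distinct. The paper does not try to exclude this case; it handles it directly by discarding $\alpha$ and working with $\beta$ alone, noting that $\beta$ then has at least one self-intersection (at $z$, where the subloop closes up) and applying the self-intersection version of Lemma~\ref{lem: loop intersections} to it. Your proof as written does not cover this case.

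Two smaller points. Your Euler-characteristic count of $\partial N$ assumes every vertex of $G=\alpha\cup\beta$ is $4$-valent, i.e.\ that all intersections are transverse double points. Geodesics can meet with higher multiplicity (certainly the basepoint is at least $4$-valent, and triple points are possible). The paper's proof of Lemma~\ref{lem: filling} is set up precisely to handle vertices of arbitrary even degree by counting intersections ``with multiplicity''; your Euler-characteristic argument for $b_N$ should be rewritten in those terms (replace $E=2V$ by the degree-sum formula), after which the bound $b_N \leq 2 + I$ in terms of the multiplicity count $I$ still comes out. Finally, you should note, as the paper does via the wedge-of-circles argument, that primitivity guarantees the regular neighbourhood is not a disk or annulus, so that the resulting boundary actually produces a nonempty essential multicurve; this is used implicitly when you pass to geodesic representatives.
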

\begin{proof}
Given the two curves $\alpha$ and $\beta$, there are two possibilities. Firstly, the two loops will have a finite number of intersections between them (at least one since they intersect at the base point of the loop). In this case we have a bound on the total number of intersections both between the two curves and their self-intersections by Lemma \ref{lem: loop intersections} of order $O(c^2g^{2b}(\log(g))^2)$ using the condition on the injectivity radius. The second possibility is that one of the loops is a subloop of the other. In this case, we consider just the longer of the two curves. Again this curve has at least one self-intersection since for it to be distinct from the other curve it must contain more than one subloop. The total number of self-intersections of this curve is also again of order $O(c^2g^{2b}(\log(g))^2)$ using Lemma \ref{lem: loop intersections}. \par 
Consider a regular neighbourhood of the curves in either possibility described above in $\Sigma_g$. The boundary of this neighbourhood will be a collection of disjoint simple closed curves and we consider the multicurve $\gamma$ consisting of the simple closed geodesics that are freely homotopic to the boundary curves (discarding any such repeated curves). By construction when taking the neighbourhood of the set, each boundary component will be homotopic to simple closed segments of $\alpha\cup\beta$ (or just one of the curves in the second case) with each such segment appearing exactly twice (the portion of the neighbourhood either side of the union of the curves). Since the geodesics in the free homotopy classes are length minimising, their total sum must then be at most twice the total sum of the curves $\alpha$ and $\beta$ from this double counting and so the total length of the multicurve constructed is bounded by $4c\log(g)$. \par 

\begin{figure}[ht!]
\includegraphics[scale=0.2]{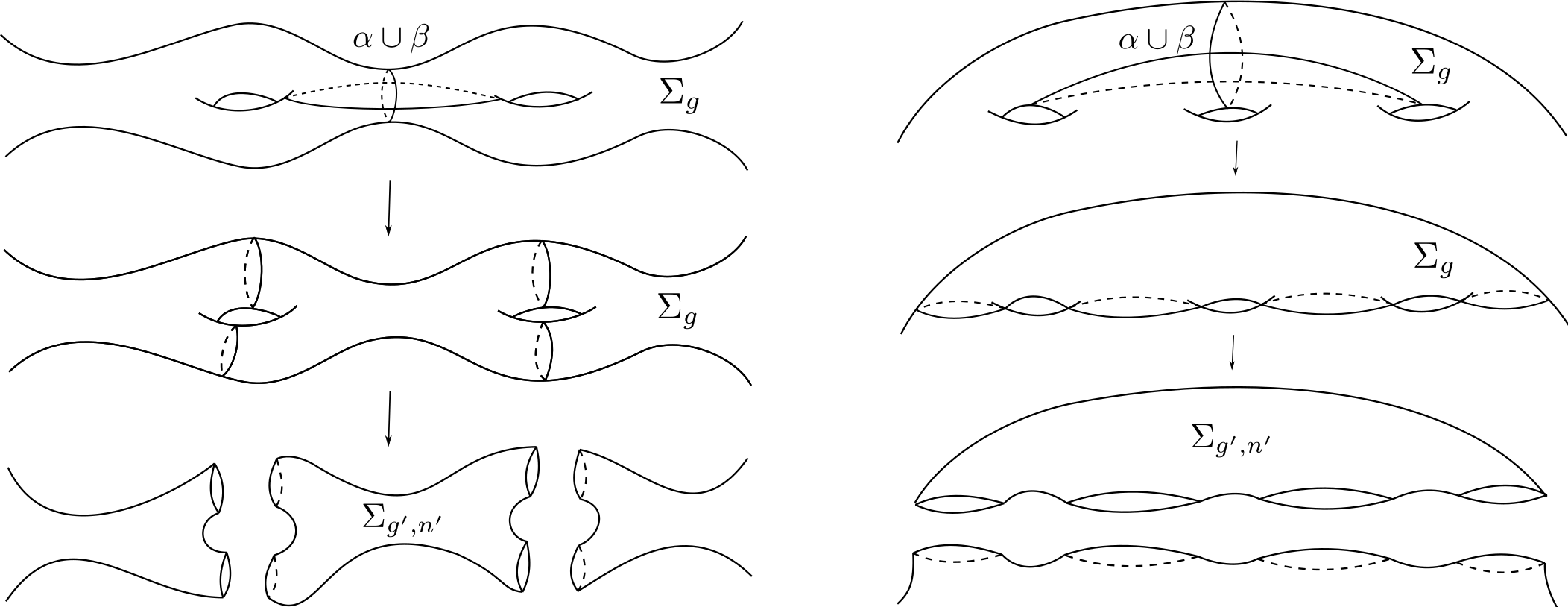}
\caption{Some possibilities of the formation of the subsurface $\Sigma_{g',n'}$ from the regular neighbourhood of $\alpha \cup \beta$ in $\Sigma_g$. One begins by taking the regular neighbourhood of the union $\alpha \cup \beta$ and homotoping the boundary components to their geodesic representations. Then one cuts along these geodesics to obtain the subsurface $\Sigma_{g',n'}$.}
\label{fig:multicurve}
\end{figure}

If one considers the graph whose vertices are the points of intersection of the curve(s) and edges being the geodesic segments between the curves, then one may homotope this graph to a wedge of circles. In each possibility, the primitivity of the curves ensures that we have at least two distinct circles in this wedge and so the regular neighbourhood bounds a non-trivial hyperbolic surface. It is clear by construction that $\alpha$ and $\beta$ are together filling curves for the subsurface constructed by this regular neighbourhood as it is non-trivial. Thus, if $(g',n')$ is the signature of this subsurface $\Sigma_{g',n'}$ we have by Lemma \ref{lem: filling} that
	\begin{align*}
		2g'+n'-2 \leq I,
	\end{align*}
where $I=i(\alpha,\beta)+i(\alpha,\alpha)+i(\beta,\beta)$. By Lemma \ref{lem: loop intersections} applied to each of the intersections, we obtain that $I=O(c^2g^{2b}(\log(g))^2)$ in either case by hypothesis on the surface. \par 
If also $\alpha$ and $\beta$ filled the surface $\Sigma_g$ then by the same argument one would have that
	\begin{align*}
		2g-2 \leq I,
	\end{align*}
which for $g$ sufficiently large is not possible since $I=o(g)$ as $b<\frac{1}{2}$, and so $\gamma$ must be separating when $g$ is large enough. Two possibilities of how this multicurve could separate the surface are given in Figure \ref{fig:multicurve}. The number of components in $\gamma$ is given by $n'$ which from the inequality $n'\leq 2g'+n' \leq I+2$ is seen to be of order $O(c^2g^{2b}(\log(g))^2)$ as required.
\end{proof}

So with this result, from two primitive geodesic loops based at a point of length at most $c\log(g)$, we obtain for large enough genus a separating multicurve with total length at most $4c\log(g)$ consisting of disjoint simple closed geodesics. We will next investigate how such a multicurve can be realised on a surface and show that with probability tending to one as $g\to\infty$, such a multicurve can not exist on a random surface $X$ with injectivity radius bound given previously. This will mean that $N_r(X)\leq 1$ for all $r\leq c\log(g)$ for some $c>0$ and that we have a suitable bound on the number of group elements desired, both with high probability.

\subsection{Proving Theorems \ref{thm:MainRandom} and \ref{t:proba}}
We require an estimate on the product of volumes of moduli spaces of the subsurfaces obtained from cutting along the multicurve when the lengths of the curves can depend on the genus. In particular, we wish to see how the sum of such products can grow over all possible genera configurations on the subsurfaces with a given number of boundary components on each subsurface (in fact we will only require a special case of this for when the number of subsurfaces is 2, but we include the more general result here as it is of interest in its own right). The starting point for this is the relation between different volumes given in Mirzakhani \cite[Lemma 3.2]{Mir13} which has been reproduced here in Lemma \ref{lem: Mirzakhani Volume Relation} and the growth estimate on volumes of moduli spaces from Mirzakhani and Zograf \cite{MZ15} stated in Theorem \ref{thm: Mirzakhani, Zograf Volume Estimates}.

\begin{lemma}
\label{lem: Sum-Volume Product}
Suppose that $q,k(g),n_1(g),\ldots,n_q(g)\in\N$ with $2\leq q\leq k(g)+1$, $\sum_{i=1}^q n_i(g) = 2k(g)$ and $k(g)=O(g^d)$ for some $0<d<1$, then
	\begin{align*}
		\sum_{\{g_i\}} \prod_{i=1}^q V_{g_i,n_i(g)}= O\left(\frac{V_g D^{k(g)} \sqrt{k(g)}}{g^{\frac{1}{2}(q-1)}}\right),
	\end{align*}
as $g\to\infty$ where the sum is over all ordered sets of $\{g_i\}_{i=1}^q\subseteq\mathbb{Z}_{\geq 0}$ satisfying $\sum_{i=1}^q g_i= g+q-k(g)-1$ and $2g_i-3+n_i\geq 0$ for all $i=1,\ldots,q$ and $D$ is some universal constant independent of all the parameters. 
\end{lemma}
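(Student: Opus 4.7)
The plan is to reduce the claim to a combinatorial estimate by combining Mirzakhani--Zograf (Theorem \ref{thm: Mirzakhani, Zograf Volume Estimates}) with Mirzakhani's monotonicity relation (Lemma \ref{lem: Mirzakhani Volume Relation}). For a single factor, applying Lemma \ref{lem: Mirzakhani Volume Relation} with $i=\lfloor n/2\rfloor$ yields $V_{g,n}\lesssim V_{g+\lfloor n/2\rfloor,\,n\bmod 2}$ with a universal constant, after which Theorem \ref{thm: Mirzakhani, Zograf Volume Estimates} applies to the right-hand side (which has $0$ or $1$ boundary components) to give a uniform-in-$n$ bound
$$V_{g,n}\leq A\,\frac{(2g+n-3)!(4\pi^2)^{2g+n-3}}{\sqrt{g+n/2}}$$
for some absolute $A>0$. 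Introducing $m_i=2g_i+n_i-3$, so that $N:=\sum_i m_i=2g-q-2$ and $g_i+n_i/2=(m_i+3)/2$, this converts the product into
$$\prod_{i=1}^qV_{g_i,n_i}\lesssim A^q 2^{q/2}(4\pi^2)^{N}\prod_{i=1}^q\frac{m_i!}{\sqrt{m_i+3}}.$$

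Next, Theorem \ref{thm: Mirzakhani, Zograf Volume Estimates} gives $V_g\gtrsim (2g-3)!(4\pi^2)^{2g-3}/\sqrt{g}$, and since $q\leq k+1=O(g^d)$ with $d<1$, the factorial ratio $(2g-3)!/N!=(N+1)(N+2)\cdots(N+q-1)\geq g^{q-1}$ once $g$ is sufficiently large. Dividing through, the question reduces to bounding
$$T_q(N):=\sum_{\substack{m_1,\ldots,m_q\geq 0\\ \sum m_i=N}}\prod_{i=1}^q\frac{m_i!}{\sqrt{m_i+3}}.$$
I would prove by induction on $q$ that $T_q(N)\leq q\,E^q\,N!/\sqrt{N+3}$ for some universal $E$: writing $\prod m_i!=N!/\binom{N}{m_1,\ldots,m_q}$ and using the recursion $T_q(N)=\sum_{j=0}^N(j!/\sqrt{j+3})\,T_{q-1}(N-j)$, the inductive step reduces to the estimate
$$\sum_{j=0}^N\frac{1}{\binom{N}{j}\sqrt{(j+3)(N-j+3)}}\lesssim\frac{1}{\sqrt{N+3}},$$
which holds because the sum is dominated by its two endpoints $j=0$ and $j=N$ (each contributing $1/\sqrt{3(N+3)}$), the interior terms being $O(N^{-3/2})$ since $\binom{N}{j}^{-1}\leq 1/N$ for $1\leq j\leq N-1$.

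Combining these pieces gives $\sum_{\{g_i\}}\prod_{i=1}^qV_{g_i,n_i}\lesssim V_g\cdot q F^q/g^{q-1}$ for some universal $F$. Since $1/g^{q-1}\leq 1/g^{(q-1)/2}$ for $q\geq 1$, and since one may choose $D$ large enough depending only on $F$ so that $qF^q\leq D^k\sqrt{k}$ uniformly for $k\geq 1$ and $q\leq k+1$ (checking small and large $k$ separately, as the exponential $D^k$ absorbs the linear factor $q=O(k)$), the claimed estimate follows. The main obstacle lies in the combinatorial bound for $T_q(N)$: the naive inequality $\prod m_i!\leq N!$ is far too weak, and one must exploit the concentration of $T_q(N)$ on the $q$ extremal tuples in which a single $m_i$ equals $N$ and the remaining coordinates vanish; the endpoint-dominated estimate for $\sum\binom{N}{j}^{-1}/\sqrt{(j+3)(N-j+3)}$ is precisely what drives the induction. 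A secondary subtlety is the uniformity in $n$ required in the first step, which is not immediate from Theorem \ref{thm: Mirzakhani, Zograf Volume Estimates} and is handled via Lemma \ref{lem: Mirzakhani Volume Relation}.
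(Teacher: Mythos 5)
Your proposal is correct, and it departs from the paper's proof at the key combinatorial step. Both arguments open identically: apply Lemma~\ref{lem: Mirzakhani Volume Relation} to reduce to $0$ or $1$ boundary components, invoke Theorem~\ref{thm: Mirzakhani, Zograf Volume Estimates} for a uniform bound of the shape $V_{g,n}\lesssim (2g+n-3)!(4\pi^2)^{2g+n-3}/\sqrt{\cdot}$, and divide by the same formula for $V_g$. The divergence is in how the resulting sum of factorial products is estimated. The paper applies Stirling to each factor and then bounds the sum crudely by (number of admissible tuples $\{g_i\}$) $\times$ (maximum of the summand), locating the max of $\prod a_i^{a_i}$ at an extremal point of the linear constraint. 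You instead keep the factorials intact, set $m_i=2g_i+n_i-3$ and $N=\sum m_i=2g-q-2$, and prove by induction on $q$ that $T_q(N)=\sum_{\sum m_i=N}\prod m_i!/\sqrt{m_i+3}\leq qE^qN!/\sqrt{N+3}$; the inductive step rests on the multinomial identity and on $\sum_{j=0}^{N}\binom{N}{j}^{-1}/\sqrt{(j+3)(N-j+3)}\lesssim(N+3)^{-1/2}$, whose proof via endpoint domination is sound (the two endpoints give $\asymp N^{-1/2}$, $j=1,N-1$ give $O(N^{-3/2})$ each, and $\binom{N}{j}\geq\binom{N}{2}$ makes the remaining interior contribution $O(N^{-1})$, so the total is $O(N^{-1/2})$). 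The resulting bound $\lesssim qF^q/g^{q-1}$ is in fact strictly stronger than the $1/g^{(q-1)/2}$ claimed — you correctly weaken at the end to match the statement and absorb $qF^q$ into $D^k\sqrt k$. Beyond being sharper, your inductive route also sidesteps two informalities in the paper's maximization step: the paper's remark that the max occurs when ``all but one of the terms equal $1$'' does not literally describe an admissible configuration (the $a_i=2g_i+n_i-\frac52$ are half-integers $\geq\frac12$), and after Stirling the summand contains $m_i^{m_i+1/2}$ which vanishes when $m_i=0$, a case the paper glosses over but your use of $m_i!$ (equal to $1$ at $m_i=0$) handles automatically. One minor point worth noting: the statement ``interior terms being $O(N^{-3/2})$'' is accurate per term, but the interior \emph{total} is $O(N^{-1})$, not $O(N^{-3/2})$; this is still negligible against the $O(N^{-1/2})$ endpoint contribution, so the conclusion is unaffected.
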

\begin{proof}
By Lemma \ref{lem: Mirzakhani Volume Relation}, one has 
	\begin{align*}
		V_{g_i,n_i(g)}\lesssim \begin{cases}
								V_{g_i+ n_i(g)/2,0} & \text{for $n_i(g)$ even,}\\
								V_{g_i+n_i(g)/2 -1/2,1} & \text{for $n_i(g)$ odd.}
							\end{cases}
	\end{align*}
In either case, by Theorem \ref{thm: Mirzakhani, Zograf Volume Estimates}
	\begin{align*}
		V_{g_i,n_i(g)} \lesssim \frac{C(2g_i+n_i(g)-3)!(4\pi^2)^{2g_i+n_i(g)-3}}{\max\{1,\sqrt{g_i+n_i(g)/2 -1}\}}\left(1+O\left(\frac{1}{g_i+n_i(g)/2}\right)\right).
	\end{align*}
This latter remainder term can be bounded by some $C'$ independent of $g_i$ and $n_i(g)$ and so we have
	\begin{align*}
		\frac{1}{V_g}\sum_{\{g_i\}} \prod_{i=1}^q V_{g_i,n_i(g)} \lesssim D^{k(g)}\sum_{\{g_i\}}\frac{\prod_{i=1}^q\frac{1}{\max\{1,\sqrt{g_i}\}}(2g_i-3+n_i(g))!(4\pi^2)^{2g_i-3+n_i(g)}}{\frac{1}{\sqrt{g}}(2g-3)!(4\pi^2)^{2g-3}},
	\end{align*}
for some constant $D$ independent of the $n_i, g, k$ and $q$. 	
To tackle the factorial terms, we use Stirling's approximation to infer that $n!\asymp \sqrt{n}\left(\frac{n}{e}\right)^n$ so that the summand is bounded up to a constant uniform in $g$, $q$, the $n_i(g)$ and $k(g)$ by
	\begin{align*}
		\frac{\sqrt{g}\prod_{i=1}^q (2g_i-3+n_i(g))^{2g_i-\frac{5}{2}+n_i(g)}\left(\frac{4\pi^2}{e}\right)^{2g_i-3+n_i(g)}}{\left(\frac{4\pi^2}{e}\right)^{2g-3}(2g-3)^{2g-\frac{5}{2}}\prod_{i=1}^q\max\{1,\sqrt{g_i}\}}.
	\end{align*}
Notice that 
	\begin{align*}
		\left(\frac{4\pi^2}{e}\right)^{\sum_{i=1}^q (2g_i-3+n_i(g))-2g+3}=\left(\frac{4\pi^2}{e}\right)^{1-q}\leq 1,
	\end{align*}
since $q\geq 2$. Next, 
	\begin{align*}
		\frac{\sqrt{g}}{\prod_{i=1}^q\max\{1,\sqrt{g_i}\}}=\frac{\sqrt{k(g)+1-q+\sum_{i=1}^q g_i}}{\prod_{i=1}^q\max\{1,\sqrt{g_i}\}} =O(\sqrt{k(g)}).
	\end{align*}
Lastly, one can observe that
	\begin{align*}
		\prod_{i=1}^q (2g_i-3+n_i(g))^{2g_i-\frac{5}{2}+n_i(g)}\leq \prod_{i=1}^q \left(2g_i-\frac{5}{2}+n_i(g)\right)^{2g_i-\frac{5}{2}+n_i(g)}.
	\end{align*}
Thus up to a constant independent of $k(g)$, $g$ and $q$ the sum of the products is bounded by
	\begin{align*}
		\sqrt{k(g)}\sum_{\{g_i\}}\frac{\prod_{i=1}^q \left(2g_i-\frac{5}{2}+n_i(g)\right)^{2g_i-\frac{5}{2}+n_i(g)}}{(2g-3)^{2g-\frac{5}{2}}}.
	\end{align*}
We now bound this summation by the number of possible ordered sets $\{g_i\}$ subject to the given Euler characteristic constraints multiplied by an upper bound on the summand itself. The former is clearly bounded above by the number of possible tuples $(g_1,\ldots,g_q)\in\mathbb{Z}_{\geq 0}^q$ that are solutions to
	\begin{align*}
		g_1+\ldots +g_q = g+q-k(g)-1,
	\end{align*}
using the first Euler characteristic constraint. However, the number of solutions to this is equal to 
	\begin{align*}
		{{g+2(q-1)-k(g)} \choose q-1} \leq (g+2(q-1)-k(g))^{q-1} \leq (2g-3)^{q-1},
	\end{align*}
for $g$ sufficiently large, with the latter inequality coming from the fact that $q\leq k(g)+1$ and $k(g) \leq g-3$ when $g$ is sufficiently large. For the maximum of the summand, we require an upper bound on the product term in the summand. Notice that by the Euler characteristic constraints, we are seeking the maximum of
	\begin{align*}
		\prod_{i=1}^q \left(2g_i-\frac{5}{2}+n_i(g)\right)^{2g_i-\frac{5}{2}+n_i(g)},
	\end{align*}
subject to 
	\begin{align*}
		\sum_{i=1}^q 2g_i-\frac{5}{2}+n_i(g) = 2g-\frac{q}{2}-2.
	\end{align*}
A product of this form attains the maximum value when all but one of the terms in the product are equal to 1 and the last component is determined by the summation condition. Hence, it is bounded above by
	\begin{align*}
		\left(2g-2-\frac{q}{2}-(q-1)\right)^{2g-2-\frac{q}{2}-(q-1)}\leq (2g-3)^{2g-\frac{3}{2}q-1},
	\end{align*}
where the latter inequality comes from the fact that $q\geq 2$. Combining these, we obtain that 
	\begin{align*}
		\frac{1}{V_g}\sum_{\{g_i\}} \prod_{i=1}^q V_{g_i,n_i(g)}= O\left(\frac{D^{k(g)}\sqrt{k(g)}(2g-3)^{2g-\frac{3}{2}q-1+(q-1)}}{(2g-3)^{2g-\frac{5}{2}}}\right) = O\left(\frac{D^{k(g)} \sqrt{k(g)}}{g^{\frac{1}{2}(q-1)}}\right).
	\end{align*}
\end{proof}

We now show that a separating multicurve as in Lemma \ref{lem: Number Of Intersections Between Curves} existing on a surface $\Sigma_g$ tends to zero in the Weil-Petersson probability asymptotically as $g\to\infty$. To this end, let $K(g)$ denote the maximal number of components in the minimally separating multicurve, so that by Lemma \ref{lem: Number Of Intersections Between Curves}, we shall look at $K(g)$ of the form $K(g)=O(c^2g^{2b}(\log(g))^2)$ for some $0<b<\frac{1}{2}$ and $c>0$ to be chosen. In fact, for the sake of simplifying the exposition of the proof we will consider $K(g)=O(g^d)$ with $d$ sufficiently small. In our case, we can take $d=2b+\varepsilon$ for any $\varepsilon>0$ if one considers $g$ large enough.\par 
Suppose that we have such a multicurve $\gamma$ then either $\gamma$ is \textit{minimally separating} in the sense that any sub-multicurve does not separate the surface or, we can find a sub-multicurve that separates the surface and trivially satisfies the same conditions on the length and number of curve components as $\gamma$. By recursively extracting sub-multicurves in this manner we will arrive at one that is minimally separating due to the fact that the number of curve components is finite, and at least one component is required to separate the surface. Thus we can show that a separating multicurve of the form we describe does not exist with probability tending to one as $g\to\infty$ by showing that minimally separating multicurve with the same length and curve component restrictions occur with probability tending to zero as $g\to\infty$. \par 
The reason that we reduce to these minimally separating multicurves is because their geometry is particularly accessible to us. Indeed, we can understand exactly how they cut a surface with the following result.

\begin{lemma}
\label{lem:MinSepMulticurveGraph}
A minimally separating multicurve $\gamma$ with $k$ components separates the surface into exactly two connected subsurfaces with $k$ boundary components each.
\end{lemma}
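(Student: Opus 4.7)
The plan is to analyse the combinatorics of how $\gamma$ sits inside $\Sigma_g$ via a dual graph argument, and then extract the conclusions from minimality.

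First, I would set up the \emph{dual graph} $G(\gamma)$ associated with the multicurve: its vertices are the connected components of $\Sigma_g \setminus \gamma$, and for each component $\gamma_i$ of $\gamma$ one attaches an edge joining the vertex (or vertices) corresponding to the component(s) lying on the two sides of $\gamma_i$. An edge is a loop precisely when both local sides of $\gamma_i$ lie in the same component of $\Sigma_g \setminus \gamma$. Since $\Sigma_g$ is connected, the graph $G(\gamma)$ is connected. The key observation I would then verify is the following translation between topology and graph theory: for any sub-multicurve $\gamma'' \subseteq \gamma$, the number of connected components of $\Sigma_g \setminus \gamma''$ equals the number of connected components of the graph $(V(G(\gamma)),\gamma \setminus \gamma'')$ obtained by keeping only the edges corresponding to curves that have been glued back.

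Let $q$ denote the number of components of $\Sigma_g \setminus \gamma$; since $\gamma$ separates we have $q \geq 2$. The first main step is to show $q = 2$. If $q \geq 3$, I would fix any component $\gamma_i$ and consider $\gamma'' = \gamma \setminus \{\gamma_i\}$. By the dictionary above, the components of $\Sigma_g \setminus \gamma''$ correspond to the components of the graph having all vertices of $G(\gamma)$ but only the single edge $\gamma_i$. Whether $\gamma_i$ is a loop or joins two distinct vertices, at least $q-2 \geq 1$ of the vertices remain isolated, so this graph has at least $q - 1 \geq 2$ components. Hence $\gamma''$ still separates $\Sigma_g$, contradicting the minimality of $\gamma$. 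Therefore $q = 2$; denote the two pieces $S_1$ and $S_2$.

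Next I would show that no $\gamma_i$ corresponds to a loop in $G(\gamma)$. Suppose $\gamma_i$ is a loop based at (say) the vertex corresponding to $S_1$. Then $\gamma'' = \gamma \setminus \{\gamma_i\}$ corresponds to the graph with the two vertices $S_1, S_2$ and no edge between them, which is disconnected. Thus $\Sigma_g \setminus \gamma''$ still has two components, again violating minimality. Consequently each $\gamma_i$ joins the two distinct vertices $S_1, S_2$, which topologically means each $\gamma_i$ has one side in $S_1$ and the other in $S_2$. Summing up the contributions to the boundary, each of $S_1$ and $S_2$ inherits exactly $k$ boundary components (one from each $\gamma_i$), and the total of $2k$ boundaries is accounted for. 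This will complete the proof.

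The main obstacle I anticipate is being precise about the dictionary between gluing curves and edge operations on the dual graph (in particular handling loop edges correctly, since a loop corresponds to a non-separating simple closed curve lying inside one component of $\Sigma_g \setminus \gamma$), and ensuring that the argument deals uniformly with both the loop and non-loop cases when a single $\gamma_i$ is removed. Everything else is a short deduction from the minimality hypothesis once this combinatorial framework is installed.
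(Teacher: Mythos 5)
Your proof is correct and follows essentially the same dual-graph argument as the paper's: both translate the problem to connectivity of the dual graph and use minimality to rule out more than two vertices and self-loop edges. Your variant of the first step (deleting a single edge and noting $q-2\geq 1$ vertices stay isolated) is a slightly leaner version of the paper's (which deletes all edges between a chosen adjacent pair), but the underlying idea and the second step are identical.
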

\begin{proof}
Consider the dual graph to the multicurve $\gamma$. This is the graph whose vertex set is the connected components of the cut surface weighted with the genus and number of boundary components of that surface, and the edge set consists of an edge between two vertices for each component of the multicurve that creates a common boundary between the two surface components represented by these vertices when one cuts the surface with that multicurve component. If this graph had more than two vertices, then one could fix any two vertices that are connected by at least one edge and consider the multicurve associated to all edges of the graph aside from the edges joining these two vertices. This multicurve by construction is a sub-multicurve of $\gamma$ and separates the surface, since it would disconnect the two fixed vertices (and hence the corresponding connected components) from the other vertices in the graph which contradicts the fact that the multicurve is minimally separating. 

There are $k$ boundary components on each of the subsurfaces. Indeed, any further boundary components on one of them (and hence less on the other since there are $2k$ boundaries in total) would originate from some of the curves in the multicurve cutting open holes on this connected component, and so such curves could be removed from the multicurve producing a smaller multicurve that still disconnects the surface -- a contradiction to the minimal separating property. In terms of the dual graph to the multicurve described above, this is equivalent to there being no self-loops at either vertex. 
\end{proof}

In addition to this property, given a minimally separating multicurve $\gamma$ we can precisely understand the symmetry group $\mathrm{Sym}(\gamma)$. In fact, in \cite[Section 4.1 and proof of Theorem 4.2 Claim 2]{Mir13}, Mirzakhani also studies multicurves of this type and states that $|\mathrm{Sym}(\gamma)|=k!$ when $\gamma$ has $k$ components. As this is a crucial point for us, we include a proof.

\begin{lemma}
\label{lem:MinSepMulticurveGroup}
If $\gamma$ is a minimally separating multicurve with $k$ components, then $\mathrm{Sym}(\gamma) \simeq \mathfrak{S}_k$, where $\mathfrak{S}_k$ is the symmetric group on $\{1, \ldots, k\}$.
\end{lemma}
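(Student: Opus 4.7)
The plan is to produce the isomorphism as a natural map $\mathrm{Sym}(\gamma) \to \mathfrak{S}_k$ encoding the permutation action of the stabiliser on the ordered tuple $(\gamma_1,\dots,\gamma_k)$, and then to prove injectivity and surjectivity separately. For injectivity I would simply unpack the definition in \eqref{eq:sym}. Every $\phi \in \mathrm{Stab}(\gamma)$ sends $\gamma$ to itself as a set, hence permutes its components; let $\sigma(\phi) \in \mathfrak{S}_k$ denote that permutation. The assignment $\phi \mapsto \sigma(\phi)$ is a homomorphism $\mathrm{Stab}(\gamma) \to \mathfrak{S}_k$ whose kernel consists precisely of mapping classes fixing each $\gamma_i$ individually, i.e.\ $\bigcap_{i=1}^k \mathrm{Stab}(\gamma_i)$. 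Passing to the quotient gives an injection $\mathrm{Sym}(\gamma) \hookrightarrow \mathfrak{S}_k$.

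For surjectivity I would use Lemma \ref{lem:MinSepMulticurveGraph}: cutting $\Sigma_g$ along $\gamma$ produces exactly two connected subsurfaces $S_1 \simeq \Sigma_{g_1,k}$ and $S_2 \simeq \Sigma_{g_2,k}$, each carrying all $k$ curves as boundary components. Since $\mathfrak{S}_k$ is generated by transpositions, it suffices to realise each transposition $(i\,j)$ by an element of $\mathrm{Stab}(\gamma)$. To realise $(i\,j)$, I would choose on each $S_\ell$ ($\ell=1,2$) a simple arc $\alpha_\ell$ joining $\gamma_i$ to $\gamma_j$ whose interior is disjoint from the other boundary components; a regular neighbourhood of $\gamma_i \cup \alpha_\ell \cup \gamma_j$ in $S_\ell$ is a pair of pants $P_\ell$ whose outer boundary is a simple closed curve in the interior of $S_\ell$. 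The standard half-twist of $P_\ell$ supported in this pair of pants extends by the identity to a homeomorphism $\phi_\ell$ of $S_\ell$ which fixes the outer boundary of $P_\ell$ (and hence all other boundary components of $S_\ell$) and swaps $\gamma_i$ and $\gamma_j$ setwise. Choosing the arcs and half-twists so that the two maps agree on the common boundary $\gamma$, I glue $\phi_1$ and $\phi_2$ to obtain a homeomorphism $\phi$ of $\Sigma_g$ lying in $\mathrm{Stab}(\gamma)$ with $\sigma(\phi) = (i\,j)$.

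The compatibility of the two half-twists along the glued boundary is the step that requires a little care, and I expect to be the main technical point; the trick is to realise both half-twists so that the induced identification on $\gamma_i \sqcup \gamma_j$ is the same on the $S_1$-side and the $S_2$-side (one can first fix the half-twist on $S_1$, which determines a specific homeomorphism on $\gamma_i \sqcup \gamma_j$, and then choose $\phi_2$ on $S_2$ with matching boundary behaviour, using that the mapping class group of a surface acts transitively enough on parametrisations of a boundary circle via boundary Dehn twists). Combining injectivity and surjectivity then yields the isomorphism $\mathrm{Sym}(\gamma) \simeq \mathfrak{S}_k$.
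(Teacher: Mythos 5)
Your proposal is correct and follows essentially the same route as the paper: both identify $\mathrm{Sym}(\gamma)$ with a subgroup of $\mathfrak{S}_k$ via the permutation action on components, reduce surjectivity to realising transpositions, and construct each transposition from homeomorphisms on the two subsurfaces of Lemma \ref{lem:MinSepMulticurveGraph}, each supported in a regular neighbourhood of an arc joining the two relevant boundary circles (your "half-twist in a pair of pants" is the paper's "elementary transformation"), glued compatibly along $\gamma$. The gluing-compatibility point you flag is handled in the paper by choosing the two regular neighbourhoods to be homeomorphic from the outset so the two swaps agree on $\gamma_i\sqcup\gamma_j$; your alternative of correcting by boundary Dehn twists also works but is slightly more than needed, since any two orientation-preserving swaps of the circle pair are already isotopic and an isotopy-extension argument suffices.
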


\begin{proof}
Recall the definition from \eqref{eq:sym} of the symmetry group $$\mathrm{Sym}(\gamma) = \mathrm{Stab}(\gamma)/\bigcap_{i=1}^k\mathrm{Stab}(\gamma_i).$$
By construction, $\mathrm{Sym}(\gamma)$ can be identified with a subgroup $\mathfrak h$ of $\mathfrak{S}_k$. Indeed, $\mathrm{Stab}(\gamma)$ acts on the $k$ curves by permutations and the quotient makes this action faithful. To show that $\mathfrak h = \mathfrak{S}_k$ it suffices to show that the subgroup $\mathfrak{h}$ contains the transpositions.

This is the case as for any two components $\gamma_1$ and $\gamma_2$ of $\gamma$, we can find an orientation preserving homeomorphism $h$ that permutes $\gamma_1$ and $\gamma_2$ and leaves the other curves invariant. To see this, we use the existence of the following elementary transformation: Let $b_1$ and $b_2$ be two boundary components of a connected surface and $a$ an arc joining the two components. Let $\Omega$ be a regular neighbourhood of $a \cup b_1 \cup b_2$. There exists an orientation preserving homeomorphism that exchanges $b_1$ and $b_2$ and is equal to the identity outside of $\Omega$.\par 
Now to construct the homeomorphism $h$, we cut open the surface along the multicurve $\gamma$. By Lemma \ref{lem:MinSepMulticurveGraph}, we obtain two subsurfaces with $k$ boundary components, and such that each of the $k$ curves of $\gamma$ contribute one boundary component to each subsurface. Let $b_1$ and $b_2$ be the two boundary components coming respectively from $\gamma_1$ and $\gamma_2$ on one of the two subsurfaces. We can connect $b_1$ and $b_2$ by an arc $a$ that does not touch the other boundary components. We can therefore find a homeomorphism that swaps $b_1$ and $b_2$ and is the identity in a neighbourhood of all the other boundary components. Similarly, we can swap the boundaries coming from $\gamma_1$ and $\gamma_2$ on the second subsurface. Moreover, since the neighbourhoods that we consider in each of the subsurfaces when swapping the boundary components can be taken to be homeomorphic, we can arrange it so that the homeomorphisms on each subsurface match up in a homeomorphic way across the geodesics $\gamma_1$ and $\gamma_2$ when we glue back together the boundary components of each curve of $\gamma$. By this procedure we have built a homeomorphism $h$ that maps $\gamma_1$ to $\gamma_2$ and $\gamma_2$ to $\gamma_1$ while being the identity map in a neighbourhood of the other curves.
\end{proof}

\begin{thm}
\label{thm: Separating Multicurve Almost Surely Does Not Exist}
Choosing $c$ and $d$ sufficiently small independently of $g$ and $K(g)=O(g^d)$, we have that
\begin{align*}
	\P_g\left(X\in\mathcal{M}_g : \parbox{8cm}{$X$ contains a separating multicurve $\gamma$ with at most K(g) disjoint simple closed curve components of total length at most $4c\log g$.}\right)\to 0,
\end{align*}
as $g\to\infty$. In fact there exists a universal constant $\delta > 0$ such that this probability is $O(g^{-\frac12 + \delta (c+d)})$, where the implied constant is independent of $c$ and $d$.
\end{thm}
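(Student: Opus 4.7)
The plan is to apply Markov's inequality to an expected count of short separating multicurves, computed via Mirzakhani's integral formula (Theorem \ref{thm: Mirzakhani Integral Formula}). First I reduce to \emph{minimally separating} multicurves: if $X$ admits a separating multicurve $\gamma$ with at most $K(g)$ components of total length at most $4c\log g$, then iteratively discarding components that are inessential to separation produces a minimally separating sub-multicurve satisfying the same bounds. It therefore suffices to show that the expected number of minimally separating multicurves with at most $K(g)$ components and total length at most $4c\log g$ is $O(g^{-1/2 + \delta(c+d)})$.

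For a minimally separating $\gamma$ with $k$ components, Lemma \ref{lem:MinSepMulticurveGraph} gives $\Sigma_g \setminus \gamma \cong \Sigma_{g_1,k}\sqcup\Sigma_{g_2,k}$ with $g_1+g_2=g+1-k$, Lemma \ref{lem:MinSepMulticurveGroup} gives $|\mathrm{Sym}(\gamma)|=k!$, and $M(\gamma)=0$ for $k\geq 2$ by minimality. Applying Theorem \ref{thm: Mirzakhani Integral Formula} to the symmetric test function $F(\mathbf{x}) = \mathbf{1}\{x_1+\dots+x_k\leq 4c\log g\}$, using $V_{g_i,k}(\mathbf{x})\leq e^{|\mathbf{x}|/2}V_{g_i,k}$ from Lemma \ref{lem: Mirzakhani Volume Estimates} to extract a factor $g^{4c}$, and computing $\int_{\sum x_i\leq L}x_1\cdots x_k\, d\mathbf{x} = L^{2k}/(2k)!$, I obtain, for each topological type,
\[
\mathbb{E}_g\!\left[N_k^{g_1,g_2}\right] \leq \frac{V_{g_1,k}\,V_{g_2,k}}{V_g\, k!}\cdot\frac{g^{4c}(4c\log g)^{2k}}{(2k)!}.
\]

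Next, summing over topological types via Lemma \ref{lem: Sum-Volume Product} (with $q=2$, $n_1=n_2=k$) and then over $1\leq k\leq K(g)=O(g^d)$, while using the elementary identity $\sum_{k\geq 0}y^{2k}/(2k)!=\cosh y$ together with the crude estimate $\sqrt{k}\leq g^{d/2}$, the tail series collapses:
\[
\sum_{k=1}^{K(g)}\frac{D^k\sqrt{k}(4c\log g)^{2k}}{k!(2k)!} \leq g^{d/2}\,\cosh(4c\sqrt{D}\log g) \leq g^{d/2+4c\sqrt{D}}.
\]
Multiplying the various factors together gives a probability bound of $g^{-1/2 + 4c(1+\sqrt{D}) + d/2} = O(g^{-1/2+\delta(c+d)})$ for any universal $\delta\geq 4(1+\sqrt{D})$, so choosing $c$ and $d$ small enough that $\delta(c+d)<\tfrac12$ forces the probability to $0$ as $g\to\infty$.

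The main obstacle, as I see it, is controlling the sum over $k$ while keeping all implied constants universal: the number of boundary components, and hence the dimension of the length integral in Mirzakhani's formula, grows with $g$. This is where the sharp Mirzakhani--Zograf asymptotics (Theorem \ref{thm: Mirzakhani, Zograf Volume Estimates}) packaged into Lemma \ref{lem: Sum-Volume Product} are essential, providing both the polynomial prefactor $1/\sqrt{g}$ and the universal exponential factor $D^k$ in $k$. The identity $\sum_k y^k/(k!(2k)!)\leq\cosh(\sqrt{y})$ is the small technical miracle that prevents the tail from being superpolynomial in $g$ and lets the whole bound collapse into the desired $g^{O(c)+d/2}$ shape; without it one would not be able to absorb the $(4c\log g)^{2k}$ factor coming from the length constraint.
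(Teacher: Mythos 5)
Your proposal is correct and follows the same strategy as the paper's proof: reduce to minimally separating multicurves, apply Markov's inequality and Mirzakhani's integral formula, invoke Lemmas \ref{lem:MinSepMulticurveGraph}, \ref{lem:MinSepMulticurveGroup} and \ref{lem: Sum-Volume Product}, and sum the resulting series via the $\cosh$ identity. The one place you improve slightly on the paper is the length integral: you use the exact Dirichlet formula $\int_{\{\sum x_i \leq L,\, x_i \geq 0\}} x_1\cdots x_k\, d\mathbf{x} = L^{2k}/(2k)!$, whereas the paper settles for the cruder bound $L^{2k}k^{-k}$ and then needs a Stirling comparison of $k!k^k$ with $(2k)!$ (which is actually off by a factor $4^k$ absorbed into the constant $D$); your exact computation sidesteps that technicality cleanly.
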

\begin{proof}
Suppose that $\gamma$ is a separating multicurve satisfying the desired properties on its length and number of curve components. As has been outlined in the lines preceding Lemma \ref{lem:MinSepMulticurveGraph}, we can extract a minimally separating multicurve from $\gamma$ satisfying the same hypotheses on the number of components and their lengths. The probability that we are interested in computing is thus bounded by
	\begin{align*}
		\P_g\left(X\in\mathcal{M}_g : \parbox{8cm}{$X$ contains a minimally separating multicurve $\gamma$ with at most $K(g)$ disjoint simple closed curve components of total length at most $4c\log g$.}\right),
	\end{align*}
which we will now proceed to bound. Let $K(g) = O(g^d)$. Suppose that $N(X)$ is the number of minimally separating multicurves $\gamma = \sum_{i = 1}^{k(\gamma)} \gamma_i$ on $X$ for each natural number $k(\gamma) \leq K(g)$ and $\ell(\gamma)\leq 4c\log(g)$. Then the event described above is given by $\{X \in \cM_g : N(X) \geq 1\}$. By Markov's inequality, we have
$$\P_g(N(X) \geq 1) \leq \frac{1}{V_g} \int_{\cM_g} N(X) \df X.$$
Let us now bound $\int N(X) \df X$ using Mirzakhani's integration formula Theorem \ref{thm: Mirzakhani Integral Formula} and the volume estimates for Weil-Petersson volume.

 Fix $1 \leq k \leq K(g)$ and consider minimally separating $\gamma$ with $k(\gamma) = k$. 
Define the following non-negative symmetric function
$$F(x_1,\dots,x_{k}) = \1_{[0,L]} (x_1+\dots+x_{k}), \quad (x_1,\dots,x_k) \in \R^{k}_+,$$
where $L=4c\log(g)$. Given a multicurve $\gamma = \sum_{i = 1}^{k} \gamma_i$ on $\Sigma_g$ as above, the associated geometric function is obtained by
$$F_\gamma(X) = \sum_{\gamma'\in [\gamma]}\1_{[0,L]}(\ell_{\gamma'_1}(X)+\cdots + \ell_{\gamma'_{k}}(X)),$$
where the sum runs over all multicurves $\gamma' = \sum_{i = 1}^{k} \gamma_i'$ in the mapping class group orbit of $[\gamma]$ and $\{\gamma_i' : 1 \leq i \leq k\}$ is the set of components of $\gamma'$. Then for any $X \in \cM_g$, we have
	\begin{align*}
		 N(X) \leq \sum_{k=1}^{K(g)}\sum_{\substack{[\gamma] \\ k(\gamma) = k}} F_\gamma(X),
	\end{align*}
	where this inner summation runs over all possible mapping class group orbits of minimally separating multicurves $\gamma = \sum_{i = 1}^{k} \gamma_i$ with $k(\gamma) = k$ and $k = 1,\dots,K(g)$. We thus obtain the following bound on the probability of interest:

	\begin{align*}
		\P_g(N(X) \geq 1) \leq \frac{1}{V_g} \sum_{k=1}^{K(g)}\sum_{\substack{[\gamma] \\ k(\gamma) = k}}\int\limits_{\mathcal{M}_g}F_\gamma(X)\df X.
	\end{align*}

We may pass this integral over the moduli space to an integral over Euclidean space via the Mirzakhani integral formula provided in Theorem \ref{thm: Mirzakhani Integral Formula}, and hence obtain an upper bound to the above of the form
	\begin{align*}
		\frac{1}{V_g}\sum_{k=1}^{K(g)}\sum_{\substack{[\gamma] \\ k(\gamma)=k}} \frac{1}{|\mathrm{Sym}(\gamma)| 2^{M(\gamma)}} \int_{\R^{k}_{\geq 0}} \1_{[0,L]}(x_1+\cdots +x_{k})x_1\cdots x_{k} V_g(\Gamma, \mathbf{x}) \bigwedge_{i=1}^{k}\df x_i, 
	\end{align*}
	where $V_g(\Gamma, \mathbf{x})$, $\mathrm{Sym}(\gamma)$ and $M(\gamma)$ are defined as in the lines preceding Theorem \ref{thm: Mirzakhani Integral Formula} for $\mathbf{x}=(x_1,\ldots,x_k) \in \R_+^{k}$ and $\Gamma = (\gamma_1,\dots,\gamma_k)$.
	
	Let us now proceed to estimate the above quantity. By Lemma \ref{lem:MinSepMulticurveGroup}, we know that the minimal separating property of $\gamma$ means that $|\mathrm{Sym}(\gamma)| = k!$.

Next, let us address the volume term $V_g(\Gamma,\mathbf{x})$. 
Using Lemma \ref{lem:MinSepMulticurveGraph}, the cut surface necessarily has only two components with $k$ boundaries on each component. Thus, this volume term is of the form
	\begin{align*}
		V_g(\Gamma, \mathbf{x}) = V_{g_1(\gamma),k}(\mathbf{x}^1)V_{g_2(\gamma),k}(\mathbf{x}^2),
	\end{align*}
where $g_1(\gamma)+g_2(\gamma)+k-1=g$ and the length vectors $\mathbf{x}^1$ and $\mathbf{x}^2$ are given by the coordinates of $\mathbf{x}$ that correspond to which components of the multicurve form the boundaries of the two subsurfaces. These volumes can be bounded using the volume estimates of Lemma \ref{lem: Mirzakhani Volume Estimates} to obtain
	\begin{align*}
		V_g(\Gamma, \mathbf{x}) \leq e^{x_1+\cdots+x_k}V_{g_1(\gamma),k}V_{g_2(\gamma),k}.
	\end{align*}

Thus we obtain that
\begingroup
	\allowdisplaybreaks
	\begin{align*}
		&		\frac{1}{V_g}\sum_{k=1}^{K(g)}\sum_{\substack{[\gamma] \\ k(\gamma)=k}} \frac{1}{|\mathrm{Sym}(\gamma)| 2^{M(\gamma)}} \int_{\R^k_{\geq 0}} \1_{[0,L]}(x_1+\cdots +x_k)x_1\cdots x_k V_g(\Gamma, \mathbf{x}) \bigwedge_{i=1}^k\df x_i \\
		&\leq \frac{1}{V_g}\sum_{k=1}^{K(g)}\sum_{\substack{[\gamma]\\ k(\gamma)=k}} \frac{1}{k!}V_{g_1(\gamma),k}V_{g_2(\gamma),k} \int_{\R^k_{\geq 0}} \1_{[0,L]}(x_1+\cdots +x_k)e^{x_1+\cdots +x_k}x_1\cdots x_k \bigwedge_{i=1}^k\df x_i \\
		&\leq \frac{1}{V_g}\sum_{k=1}^{K(g)}\sum_{\substack{[\gamma]\\ k(\gamma)=k}}\frac{1}{k!} V_{g_1(\gamma),k}V_{g_2(\gamma),k}e^L \int_{\R^k_{\geq 0}} \1_{[0,L]}(x_1+\cdots +x_k)x_1\cdots x_k \bigwedge_{i=1}^k\df x_i \\
		&\leq \frac{1}{V_g}\sum_{k=1}^{K(g)}\sum_{\substack{[\gamma]\\ k(\gamma)=k}}\frac{1}{k!} V_{g_1(\gamma),k}V_{g_2(\gamma),k}e^LL^{2k}k^{-k}, \\
	\end{align*}
\endgroup
with the factor $L^{2k}k^{-k}$ arising from the fact that the maximum of $x_1\cdots x_k$ subject to $\sum_{i=1}^k x_i= L$ arises when each $x_i$ is equal to $Lk^{-1}$ and the measure of the set $\sum_{i=1}^kx_i=L$ is bounded by $L^k$.\par 
Notice that the sum over $[\gamma]$ with $k(\gamma)=k$ may be re-written as the sum over the pairs $(g_1,g_2)\in\mathbb{Z}_{\geq 0}^2$ satisfying the Euler characteristic criteria $g_1+g_2+k-1=g$, multiplied by the number of mapping class group orbits of the minimally separating multicurves with $k$ components that separate the surface into a topological decomposition of a genus $g_1$ and genus $g_2$ subsurface each with $k$ boundary components. Since two such multicurves are in the same mapping class orbit if and only if their dual multicurve graphs described above are the same, the number of these mapping class group orbits is precisely equal to the number of possible different dual multicurve graphs that can arise from our multicurves of $k$ components. However, as described above each such graph has two vertices, k edges and no self-loops and so there is precisely one orbit for each genus $(g_1,g_2)$ decomposition. Hence, the probability we are interested in is bounded by
	\begin{align*}
		\frac{1}{V_g}\sum_{k=1}^{K(g)}\frac{e^LL^{2k}}{k!k^k}\sum_{\{(g_1,g_2): g_1+g_2+k-1=g\}} V_{g_1,k}V_{g_2,k}.
	\end{align*}
We now make use of Lemma \ref{lem: Sum-Volume Product} for $q=2$, to deduce that
	\begin{align*}
		\sum_{\{(g_1,g_2): g_1+g_2+k-1=g\}} V_{g_1,k}V_{g_2,k} = O\left(\frac{V_g D^k\sqrt{k}}{g^{\frac{1}{2}}}\right),
	\end{align*}
with the implied constant being independent of $g$ and $k$. Up to a constant, we thus have the probability bounded by
	\begin{align*}
		g^{-\frac{1}{2}}\sum_{k=1}^{K(g)}\frac{e^LL^{2k}D^k\sqrt{k}}{k!k^k}.
	\end{align*}
Using $L=4c\log(g)$ we can write $e^L=g^{4c}$ and bound $\sqrt{k}\leq \sqrt{K(g)}\lesssim g^{\frac{1}{2}d}$. Moreover, using Stirling's approximation, $k!$ is bounded up to a constant above and below by $k^{k+\frac{1}{2}}e^{-k}$. In particular, up to bounding by a universal constant, we may replace $k^kk!$ by $(2k)!e^{k}$ providing an upper bound of the form
	\begin{align*}
		g^{-\frac{1}{2}+4c+\frac{1}{2}d}\sum_{k=1}^{K(g)}\frac{(4ce^{-\frac{1}{2}} D^{\frac12}\log(g))^{2k}}{(2k)!}\leq g^{-\frac{1}{2}+4c+\frac{1}{2}d}\cosh(4ce^{-\frac{1}{2}}D^{\frac12}\log(g))\leq g^{\delta(c+d)-\frac{1}{2}},
	\end{align*}
 for some universal constant $\delta > 0$. Taking $c$ and $d$ sufficiently small, we can then insist that this probability tends to zero as $g\to\infty$.
\end{proof} 

We now combine Theorem \ref{thm: Separating Multicurve Almost Surely Does Not Exist} with Theorem 4.2 of \cite{Mir13} to obtain Theorem \ref{t:proba}.

\begin{proof}[Proof of Theorem \ref{t:proba}]
Let $b$ and $c$ be chosen such that 
	\begin{align*}
		A_g^{b,c} = \left\{X\in\mathcal{M}_g : \parbox{10cm}{$X$ contains a separating multicurve $\gamma$ with at most $K(g)$ disjoint simple closed curve components of total length at most $4c\log g$.}\right\},
	\end{align*}
where $K(g)=O(g^d)$, $d=2b+\varepsilon$ for any $\varepsilon>0$ tends to zero as $g\to\infty$. By Theorem \ref{thm: Separating Multicurve Almost Surely Does Not Exist} when these constants are suitably chosen, the rate of this is $O(g^{-\frac{1}{2}+\delta(d+c)})$ for some universal constant $\delta$. Moreover, let 
	\begin{align*}
		B_g^b = \{X\in\mathcal{M}_g:\ \mathrm{InjRad}(X)\leq g^{-b}\}.
	\end{align*}
By a result of Mirzakhani \cite[Theorem 4.2]{Mir13}, we have that
	\begin{align*}
		\mathbb{P}_g(B_g^b) = O(g^{-2b}).
	\end{align*}
Notice then that 
	\begin{align*}
		(\mathcal{M}_g\setminus A_g^{b,c})\cap (\mathcal{M}_g\setminus B_g^b)\subseteq \{X\in(\mathcal{M}_g)_{\geq g^{-b}} : N_{c\log g}(X) \leq 1\}.
	\end{align*}
Indeed, suppose $X$ is contained in the left-hand side then by definition of $B_g^b$ it is contained in $(\mathcal{M}_g)_{\geq g^{-b}}$. Moreover, if it had $N_{c\log(g)}>1$ then Lemma \ref{lem: Number Of Intersections Between Curves} would imply that there would exist a multicurve on $X$ of the form described in the definition of $A_g^{b,c}$ which is a contradiction and thus the inclusion holds. This means that Theorem \ref{t:proba} follows from a lower bound on the probability of the event on the left-hand side. But this can be determined as follows:
	\begin{align*}
		\mathbb{P}_g((\mathcal{M}_g\setminus A_g^{b,c})\cap (\mathcal{M}_g\setminus B_g^b)) &= \mathbb{P}_g(\mathcal{M}_g\setminus (A_g^{b,c}\cup B_g^b))\\
		&\geq 1 - (\mathbb{P}_g(A_g^{b,c}) + \mathbb{P}_g(B_g^b))\\
		&= 1- O(g^{-\frac{1}{2}+\delta(b+c)}+g^{-2b}),
	\end{align*}
by applying Theorem 4.2 of \cite{Mir13} and Theorem \ref{thm: Separating Multicurve Almost Surely Does Not Exist} above as required.
\end{proof}
Thus, if one sets $\cA_g^{b,c}$ to be the event 
	\begin{align*}
		\{X\in(\mathcal{M}_g)_{\geq g^{-b}} : N_{c\log g}(X) \leq 1\},
	\end{align*}
then combining Theorems \ref{t:proba}, \ref{thm:Main} and Lemma \ref{lem: Sufficient Assumption} gives that the bounds of Theorem \ref{thm:MainRandom} hold for any surface in $\cA_g^{b,c}$ which has probability tending to $1$ as $g\to +\infty$ with the rate given by Theorem \ref{t:proba}.

\section*{Acknowledgements}

We thank Bram Petri for patiently explaining to us the details of \cite{MP17} and sharing his ideas on the proof of Theorem \ref{t:proba}. We are grateful to Ara Basmajian, Mikolaj Fraczyk, Ilmari Kangasniemi, Vadim Kulikov, Michael Magee, Laura Monk, Mark Pollicott, Peter Sarnak, Alex Wright and Peter Zograf for many helpful discussions and comments. Finally we thank the anonymous referee for numerous comments that led to an improvement of the presentation and the proof.

\end{document}